\newtheorem{thm}{Theorem}[section]
\newtheorem{lemma}{Lemma}[section]
\newtheorem{prop}{Proposition}[section]
\newtheorem{df}{Definition}[section]
\theoremstyle{remark}
\newtheorem{remark}{Remark}[section]
\newcommand{\Ric}{\mbox{Ric}}
\newcommand{\R}{\mathbb R}
\numberwithin{equation}{section}
\newcommand{\be}{\begin{equation}}
\newcommand{\ee}{\end{equation}}
\def\p{\partial}
\def\la{\langle}
\def\ra{\rangle}
\def\lf{\left}
\def\ri{\right}
\def\Pi{\displaystyle{\mathbb{II}}}
\def\Ric{\text{\rm Ric}}
\def\e{\epsilon}
\def\L{\mathcal{L}}
\def\vh{\vspace{.2cm}}
\def\m{\mathfrak{m}}
\def\mS{\mathbb{S}}
\def\dvo{dv_{\sigma_0}}
\def\dvr{dv_{\sigma_r}}
\def\dvk{dv_{\sigma_k}}
\def\a{\alpha}
\def\b{\beta}
\def\bb{\bar{b}}
\def\ol{\overline}
\def\sumi{\sum_{i=1}^3}
\def\bee{\begin{equation*}}
\def\eee{\end{equation*}}
\def\Hbbr{H_{\bar{b}} (r)}
\def\mby{\mathfrak{m}_{_{BY}}}
\def\mwy{\mathfrak{m}_{_{WY}}}
\def\Ewy{E_{_{WY}}}
\def\MK{\mathbb{R}^{3,1}}
\def\S{\Sigma}
\begin{document}

\title[]{On Second variation of Wang-Yau quasi-local energy }

\author{Pengzi Miao}
\address[Pengzi Miao]{Department of Mathematics, University of Miami, Coral Gables, FL 33146, USA.}
\email{pengzim@math.miami.edu}

\author{Luen-Fai Tam$^1$}
\address[Luen-Fai Tam]{The Institute of Mathematical Sciences and Department of
 Mathematics, The Chinese University of Hong Kong, Shatin, Hong Kong, China.}
 \email{lftam@math.cuhk.edu.hk}

\thanks{$^1$Research partially supported by Hong Kong RGC General Research Fund  \#CUHK 403011}

\renewcommand{\subjclassname}{
  \textup{2010} Mathematics Subject Classification}
\subjclass[2010]{Primary 53C20; Secondary 83C99
}

\date{}

\begin{abstract}
We study a functional on the boundary of a compact Riemannian $3$-manifold of nonnegative scalar curvature.
The functional arises as the second variation of the Wang-Yau quasi-local energy in general relativity.
We prove that the functional is positive definite on large coordinate spheres, and more general on nearly round surfaces
including large constant mean curvature spheres  in asymptotically flat $3$-manifolds with positive mass;  it is also positive definite
on small geodesics spheres, whose centers do not have vanishing curvature, in Riemannian $3$-manifolds of nonnegative
scalar curvature. We also  give examples of functions $H$,
which can be made  arbitrarily close to  $ 2$,
on the standard $2$-sphere $ (\mS^2, \sigma_0)$    such that
 the triple $(\mS^2, \sigma_0, H)$ has positive Brown-York mass while the associated  functional
 is negative somewhere.
\end{abstract}

\maketitle

\markboth{Pengzi Miao and Luen-Fai Tam}
{On second variation of Wang-Yau quasi-local energy}


\section{Introduction}

In \cite{WangYau-PRL,WangYau08}, Wang and Yau introduced a new quasilocal mass.
Briefly speaking, its definition is as follows.
Let  $ \Sigma$ be a closed $2$-surface, in a spacetime $N$ satisfying the dominant energy condition,
such that $\Sigma$ bounds a compact, spacelike hypersurface $ \Omega$.
Denote the induced Riemannian metric on $ \Sigma $ by $ \gamma$.
Given a function $ \tau $ on $ \S$ such  that
 $ \hat{\gamma} = \gamma + d \tau \otimes d \tau $ is a metric  of positive Gaussian curvature,
one considers the isometric embedding
$$X: (\Sigma,\gamma) \hookrightarrow \MK$$
where  $ X = (\hat{X}, \tau)  $  and $ \hat{X} = (\hat{X}_1, \hat{X}_2, \hat{X}_3 )$ is an isometric
 embedding of $ (\Sigma, \hat{\gamma} )$ in $ \R^3 = \{(x,0)\in \R^{3,1}\}$.
 Associated with  each such a function $\tau$ or equivalently each such an isometric
embedding $X$, Wang and Yau  introduced a quantity, which we denote by $\Ewy(\Sigma,\tau)$,
 called the  quasi-local energy of $ \Sigma$ in $ N$ with respect to $\tau$.
The Wang-Yau quasi-local mass of $ \Sigma$ in $ N $ is then defined by
\be
 \mwy(\Sigma) = \inf_{ \tau} \Ewy (\Sigma, \tau)
\ee
where the infimum is taken over all admissible  functions $\tau$ (see   \cite{WangYau08} for
an  exact formula of $ \Ewy(\Sigma, \tau)$  and the definition of admissibility).
It was proved in \cite{WangYau08} that
$\mwy(\Sigma) \ge 0 $  and $ \mwy(\Sigma) = 0$
if  the embedding $\Sigma\hookrightarrow N$ is isometric to  $ \R^{3,1}$  along $\Sigma$.

When $\Sigma $ bounds a time-symmetric $ \Omega$ and $\gamma$ has positive Gaussian curvature,
there is a well-known Brown-York quasi-local mass of $\Sigma$ (\cite{BY1, BY2}) given by
\be
\mby (\Sigma, \Omega) = \frac{1}{8 \pi} \int_\Sigma ( H_0 - H ) \ d v_\gamma
\ee
where $ H_0 $ is the mean curvature of the isometric embedding of $ (\Sigma, \gamma) $
in  $ \R^3$ and $ H $ is  the mean curvature of $ \Sigma$ in $ \Omega$.
In this situation, one has   $ \mby (\Sigma, \Omega) = \Ewy(\Sigma,\tau_0) $, where $\tau_0=0$
is an admissible function and  is also a critical point of $\Ewy(\Sigma,\cdot)$ (\cite{WangYau08}).
The variational definition of $ \mwy (\Sigma) $  suggests   $ \mwy(\Sigma) \le \mby (\Sigma, \Omega)$.
A natural question is whether $\mwy(\Sigma)=\mby (\Sigma, \Omega)$.
(Results regarding
 the global minimization of $ \Ewy(\Sigma, \cdot) $
   recently have been  announced in \cite{SanYa-Chen}.)

In this paper, we consider the local minimality of $ \mby (\Sigma, \Omega)$.
A main corollary of our result for surfaces in  an asymptotically flat manifold is:

\begin{thm} \label{theorem:  Intro-Large-sphere}
Let $(M, g)$ be an asymptotically flat $3$-manifold.
Let $S_r = \{ x \in M \ | \ | z | =r \}$ be a coordinate sphere in an admissible
 coordinate chart $\{ z_i \}$ on a given end.
 Suppose the ADM mass  of the end is positive. For sufficiently large $r$,
 the Brown-York mass of $S_r$ is a strict local minimum of $\Ewy(S_r,\cdot )$.
\end{thm}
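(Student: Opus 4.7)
The starting point is that $\tau_0 = 0$ is an admissible function at $S_r$ for $r$ large, since the induced metric $\gamma$ on $S_r$ has Gaussian curvature close to $1/r^2 > 0$. By the result of Wang--Yau recalled above, $\tau_0$ is a critical point of $\Ewy(S_r, \cdot)$ with $\Ewy(S_r, \tau_0) = \mby(S_r)$. Therefore, proving strict local minimality of $\mby(S_r)$ at $\tau_0$ reduces to showing that the second variation quadratic form
\[
\L_r(f) := \left. \frac{d^2}{dt^2}\right|_{t=0} \Ewy(S_r, t f)
\]
is strictly positive for every non-zero $ f \in C^\infty(S_r)$; the admissibility condition is preserved under small $t$ by openness of positive Gaussian curvature.

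The next step is to obtain an explicit formula for $\L_r$. A variation $\tau = t f$ perturbs both the auxiliary metric $\hat{\gamma} = \gamma + d\tau \otimes d\tau$, and with it the mean curvature $H_0$ of the isometric embedding of $(S_r, \hat{\gamma})$ into $\R^3$, as well as the Lorentzian term in $\Ewy$. Differentiating twice and evaluating at $t = 0$ produces a second-order elliptic quadratic form on $S_r$ whose coefficients depend on the induced metric $\gamma$, the mean curvature $H$, and the second fundamental form of $S_r$ in $M$. This is the functional $\L$ that is the principal object of study in the present paper.

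The central analytic task is then to show that $\L_r$ is positive definite for all sufficiently large $r$. I would carry this out by an asymptotic expansion in $1/r$. In an admissible chart $g_{ij} = \delta_{ij} + O(|z|^{-1})$, and this expansion transfers to all relevant geometric quantities on $S_r$. To leading order, $\L_r$ reduces to the corresponding quadratic form on the round sphere of radius $r$ in $\R^3$; the latter, by direct computation, is coercive of order $1/r$ on the orthogonal complement of the $\ell \le 1$ spherical-harmonic modes, but degenerate on the four-dimensional $\ell \le 1$ subspace, reflecting the invariance of $\Ewy$ under the action of the Poincar\'e group on embeddings. The next term in the expansion, arising from the $O(|z|^{-1})$ deviation of $g$ from $\delta$, picks up a contribution proportional to the ADM mass $m$ and, on this low-mode subspace, is strictly positive since $m > 0$.

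The main obstacle is precisely the analysis at these low frequencies, where the Euclidean limit of $\L_r$ has a four-dimensional kernel. One must extract the correct subleading contribution of $\L_r$ on the $\ell \le 1$ modes, verify that the remaining $O(|z|^{-1})$ components of $g$ do not cancel the mass contribution, and package this with the uniform positivity on higher modes into a single uniform-in-$r$ lower bound. The positivity of the ADM mass is essential here: in the Euclidean case $m = 0$ one gets only $\L_r \ge 0$ with a genuine four-dimensional kernel, so strict local minimality would fail.
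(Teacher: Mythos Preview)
Your outline matches the paper's approach: reduce to positive-definiteness of the second variation $F_{\gamma,H}$ via Theorem~\ref{thm-local-minimum}, split $\eta$ into a component in the four-dimensional null space $\L(\gamma)$ of the Euclidean functional $F_{\gamma,H_0}$ and its orthogonal complement, then use positivity of the ADM mass on the former and uniform coercivity on the latter.

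Two places where the paper is sharper than your sketch and which you should make explicit. First, rather than decomposing along round-sphere harmonics, the paper decomposes along $\L(\gamma)=\mathrm{span}\{1,X_1,X_2,X_3\}$ where $X$ is the isometric embedding of $(\Sigma,\gamma)$ into $\R^3$; this yields the \emph{exact} identity (Lemma~\ref{lem-earlier-2})
\[
F_{\gamma,H}(\eta_1)=|a|^2\int_\Sigma(H_0-H)\,dv_\gamma+\int_\Sigma\langle a,\nu_0\rangle^2\frac{(H_0-H)^2}{H}\,dv_\gamma,
\]
so the ADM mass enters not via an expansion of the metric but directly as the large-sphere limit of $\int_\Sigma(H_0-H)\,dv_\gamma$. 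Second, your worry that ``the remaining $O(|z|^{-1})$ components of $g$ do not cancel the mass contribution'' is handled in Proposition~\ref{prop-perturbation-big-BY}: the cross terms between $\eta_1\in\L(\gamma)$ and $\eta_2\in\L(\gamma)^\perp$ are dominated by the ratio $\int_\Sigma|H_0-H|^2\big/\int_\Sigma(H_0-H)$, which after rescaling is $O(r^{-2\tau})/O(r^{-1})\to 0$ precisely because $\tau>\tfrac12$. This decay hypothesis is the quantitative input that makes the low-mode/high-mode splitting close up, and it is absent from your sketch.
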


We will prove Theorem  \ref{theorem: Intro-Large-sphere}  by proving Theorem \ref{thm-nearly-round-surface}
in Section \ref{sect: application}  for a larger class of  ``large surfaces", namely
nearly round surfaces near the infinity  which were introduced in \cite{ShiWangWu09}.
As mentioned in \cite{ShiWangWu09},   besides large coordinate spheres,  notable examples of nearly round surfaces in an asymptotically flat $3$-manifold
 include  the constant mean curvature surfaces  constructed in \cite{HuiskenYau96} and \cite{Ye96}.

For small geodesic spheres in a  manifold of nonnegative scalar curvature, we have:

\begin{thm}  \label{thm:intro-small-spheres}
Let $(M, g)$ be a  Riemannian $3$-manifold of  nonnegative scalar curvature. Let  $ p \in M $ be  a point.
For   $r>0$, let $S_r$ be the geodesic sphere of radius $r$ centered at $p$ and  $B_r$ be the corresponding geodesic ball.
If
\be \label{eq-small-BYmass}
\lim_{r\to0}r^{-5}\mby(S_r,B_r)>0,
\ee
then $\mby(S_r,B_r)$ is a strict local minimum of $\Ewy(S_r,\tau)$ for $r>0$ sufficiently small.
\end{thm}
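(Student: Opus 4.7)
The strategy is to show that the second variation of $\Ewy(S_r, \cdot)$ at $\tau_0 \equiv 0$, call it $Q_r$, is positive definite for all sufficiently small $r > 0$. Since $\tau_0 \equiv 0$ is a critical point and $\Ewy(S_r, \tau_0) = \mby(S_r, B_r)$, this yields the strict local minimum.

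First I would work in Riemannian normal coordinates centered at $p$ and write down expansions in $r$ for the induced metric $\sigma_r$ on $S_r$, the mean curvature $H_r$ of $S_r$ in $(M,g)$, the Gaussian curvature of $\sigma_r$, and the mean curvature $H_r^0$ of the isometric embedding of $(S_r, \sigma_r)$ into $\R^3$. The natural rescaling $r^{-2}\sigma_r \to \sigma_0$ (the round metric on $\mS^2$) together with $r H_r, r H_r^0 \to 2$ makes $S_r$ a nearly round surface in the sense of the earlier sections, with subleading corrections organized by powers of $r$ with coefficients built from the curvature tensor at $p$.

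Next, I would substitute these expansions into the formula for $Q_r$ derived in the earlier part of the paper. Decompose a test function $\eta$, pulled back to $\mS^2$, as $\eta = \eta_\mathcal{K} + \eta_\perp$, where $\mathcal{K}$ is the kernel of the limiting round-sphere operator (the span of constants and the first spherical harmonics, corresponding to translations and boosts in $\MK$) and $\eta_\perp$ is $L^2$-orthogonal. On $\eta_\perp$, a uniform spectral gap gives $Q_r(\eta_\perp) \geq c \|\eta_\perp\|_{H^1}^2$ for all small $r$. On $\eta_\mathcal{K}$, the leading round-sphere contribution vanishes and the analysis must be pushed to the subleading order in $r$. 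The crux is to show that this subleading contribution equals, up to lower-order error, a positive universal constant times $\mby(S_r, B_r)\|\eta_\mathcal{K}\|^2$. The hypothesis $\lim_{r\to 0} r^{-5}\mby(S_r, B_r) > 0$ then furnishes a strict lower bound of order $r^5 \|\eta_\mathcal{K}\|^2$ on the low-mode part of $Q_r$. A Cauchy--Schwarz absorption of the cross terms between $\eta_\mathcal{K}$ and $\eta_\perp$ combines the two estimates into positive definiteness of $Q_r$.

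The principal obstacle is the explicit subleading computation on $\mathcal{K}$: one must carry the expansions of $\sigma_r$, $H_r$, and $H_r^0$ to sufficient order, transport them through the formula for $Q_r$ (which is nonlocal via the isometric embedding of $\sigma_r$ into $\R^3$), and then recognize the resulting integral over $\mS^2$ -- written as a polynomial in the curvature data at $p$ -- as a positive constant multiple of $\mby(S_r, B_r)$. The nonnegative scalar curvature assumption controls the signs of the relevant terms, and the $r^5$ threshold in the hypothesis is precisely the scale at which this subleading identification matches.
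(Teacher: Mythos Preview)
Your overall architecture---rescale to the round sphere, split $\eta=\eta_{\mathcal K}+\eta_\perp$, use the spectral gap on $\eta_\perp$, and extract a positive subleading term on $\eta_{\mathcal K}$---is exactly what the paper does, and it suffices in the cases $R(p)>0$ or $R(p)=|\Ric(p)|=0,\ \Delta R(p)>0$. In those cases $|H_0-H|^2$ is lower order than $\int(H_0-H)$, so a crude Cauchy--Schwarz absorption of the cross term works (this is the content of Proposition~\ref{prop-perturbation-big-BY}).

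The gap is in the remaining case $R(p)=0$, $|\Ric(p)|>0$. Here, after rescaling, $H_0-H=-r^2\phi+O(r^3)$ with $\phi=\sum_i\lambda_i x_i^2$, so the kernel contribution $F(\eta_{\mathcal K})$, the cross term $Q(\eta_{\mathcal K},\eta_\perp)$, and the damping $F(\eta_\perp)$ all land at the \emph{same} order $r^4$ (once $\eta_\perp$ is of size $r^2$). Two of your assertions then fail. First, $F(\eta_{\mathcal K})$ is \emph{not} a universal constant times $\mby\cdot\|\eta_{\mathcal K}\|^2$ up to lower order: by Lemma~\ref{lem-earlier-2} there is a second term $\int\langle a,\nu_0\rangle^2(H_0-H)^2/H$ of the same order $r^4$, depending on $a$ and the Ricci eigenvalues. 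Second---and this is the decisive point---a generic Cauchy--Schwarz bound on the cross term does not close. If one uses only $F(\eta_\perp)\ge\tfrac13\int(\Delta\eta_\perp)^2$ and $\|\eta_\perp\|_{L^2}\le\tfrac16\|\Delta\eta_\perp\|_{L^2}$, the resulting quadratic form in $(r^2|a|,\|\Delta\eta_\perp\|)$ has positive discriminant for suitable traceless $(\lambda_i)$ and unit $a$; for instance $\lambda=(1,1,-2)$, $a=e_3$ gives a discriminant that is strictly positive, so the bound fails. The paper resolves this via a sharp inequality on $(\mathbb S^2,\sigma_0)$ (Proposition~\ref{prop-FQ-2}): one must further split $\eta_\perp=\tau_2+\tau_3$ along the second and higher eigenspaces, observe that the cross term couples $\eta_{\mathcal K}$ only to $\tau_3$ (parity), project $\phi\eta_1$ off $E_1$, and use the improved constants $\gamma=\tfrac{5}{12}$ on $\tau_3$. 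Only after these refinements does the discriminant come out with the correct sign (equation~\eqref{eq-DELTA}), and even then positivity holds precisely because $\bar b<\tfrac{1}{90}$, which is guaranteed by $\Delta R(p)\ge 0$. Your proposal is missing this mechanism; without it the argument does not go through in case (ii).
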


We note that  condition  \eqref{eq-small-BYmass} in Theorem \ref{thm:intro-small-spheres}  is equivalent to
\begin{enumerate}

\item[(i)]  $R(p)>0$,  or

\vh

\item[(ii)] $R(p)=0$ and $|\Ric (p)|^2 >0$,  or

\vh

\item[(iii)]  $R(p)= 0 $, $|\Ric(p)| =0,$ and $ (\Delta R) (p)>0$

\end{enumerate}
which   follows from  the asymptotic expansion
 of $\mby (S_r, B_r) $  in \cite{FanShiTam07} and the assumption $ R \ge 0$.
 Here $ R $, $ \Ric $ denote the scalar curvature, the Ricci curvature of $g$.

For $ \mby(\Sigma, \Omega) = \Ewy (\Sigma, \tau_0 )$ to  locally minimizes $ \Ewy(\Sigma, \cdot)$,
the second variation of $ \Ewy(\Sigma, \cdot)$ at $ \tau_0$ is necessarily nonnegative.
We recall the following result from \cite{MiaoTamXie11}.

\begin{thm}[\cite{MiaoTamXie11}] \label{thm-local-minimum}
Suppose $\mby(\Sigma, \Omega)$ is defined for a $2$-surface $ \Sigma$ bounding a time-symmetric hypersurface $ \Omega $ in a spacetime $N$.
The second variation of $ \Ewy (\Sigma, \cdot) $ at $ \tau_0 = 0 $ (up to multiplication by $\frac{1}{8\pi} $) is
\be
F_{\gamma,H}(\eta) =  :\int_\Sigma \lf[ \frac{ ( \Delta \eta )^2 }{ H  } + ( H_0 - H ) | \nabla \eta  |^2 - \Pi_0 ( \nabla \eta ,\nabla \eta)\ri]
dv_{\gamma}
\ee
where  $\Pi_0$ is the second fundamental form of $(\Sigma, \gamma)$ when it is isometrically embedded in $ \R^3$.
If there exists  a constant $\beta>0$ such that
\be \label{eq-intro-F}
F_{\gamma, H} (\eta) \ge  \beta\int_\Sigma (\Delta\eta)^2dv_{\gamma} ,  \ \forall \  \eta \in W^{2,2}(\Sigma),
\ee
then $ \mby(\Sigma, \Omega)$  is a strict  local minimum of $ \Ewy(\Sigma, \cdot)$.
 \end{thm}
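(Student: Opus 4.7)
The statement is a second-derivative sufficient condition for a minimum in an infinite-dimensional setting, so my plan is to model the proof on the classical argument in finite dimensions while paying careful attention to the relevant function spaces. The space of admissible $\tau$ is an open subset (in a $C^2$-type topology) of some neighborhood of $\tau_0 = 0$ in $W^{2,2}(\Sigma)$, because the defining condition that $\hat{\gamma} = \gamma + d\tau \otimes d\tau$ have positive Gaussian curvature is an open condition in $C^2$. The key output to establish is a Taylor expansion
\begin{equation*}
\Ewy(\Sigma, \tau) \ = \ \mby(\Sigma, \Omega) \ + \ \tfrac{1}{2} F_{\gamma, H}(\tau) \ + \ R(\tau)
\end{equation*}
valid for $\tau$ in a small neighborhood of $0$, with a remainder $R(\tau)$ controlled by a higher power of an appropriate norm.

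The first step is to show that $\tau \mapsto \Ewy(\Sigma, \tau)$ is (at least) $C^3$ from a $C^{2,\alpha}$ neighborhood of $0$ into $\R$. The nontrivial ingredient here is the smooth dependence of the isometric embedding $\hat{X}_\tau$ of $(\Sigma, \hat{\gamma}_\tau)$ into $\R^3$ on the perturbation $\tau$; this follows from the classical Nirenberg theorem together with the implicit function theorem applied to the linearization of the isometric embedding PDE around the unperturbed embedding of $(\Sigma, \gamma)$. Once smoothness is in place, the first variation vanishes because $\tau_0 = 0$ is a critical point of $\Ewy(\Sigma, \cdot)$ (this is recalled earlier in the excerpt), the quadratic part is by definition $\tfrac{1}{2} F_{\gamma, H}(\tau)$, and the remainder satisfies an estimate of the form
\begin{equation*}
|R(\tau)| \ \le \ C \, \| \tau \|_{C^{2,\alpha}} \cdot \| \tau \|_{W^{2,2}}^2
\end{equation*}
coming from the integral form of Taylor's theorem.

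The second step is to combine this expansion with the hypothesis \eqref{eq-intro-F}. Coercivity provides
\begin{equation*}
\tfrac{1}{2} F_{\gamma, H}(\tau) \ \ge \ \tfrac{\beta}{2} \int_\Sigma (\Delta \tau)^2 \, dv_\gamma,
\end{equation*}
which controls the full $W^{2,2}$ norm of $\tau$ up to lower-order terms that can be absorbed (since the Laplacian is elliptic and mean-zero perturbations can be assumed by translation invariance of $\Ewy$ under adding constants to $\tau$, or else handled directly). Taking $\| \tau \|_{C^{2,\alpha}}$ sufficiently small makes the remainder estimate give $|R(\tau)| \le \tfrac{\beta}{4} \int (\Delta \tau)^2 \, dv_\gamma$, so that
\begin{equation*}
\Ewy(\Sigma, \tau) - \mby(\Sigma, \Omega) \ \ge \ \tfrac{\beta}{4} \int_\Sigma (\Delta \tau)^2 \, dv_\gamma,
\end{equation*}
which is strictly positive for $\tau \not\equiv \mathrm{const}$, yielding the strict local minimum.

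The main obstacle I expect is the bookkeeping between two different norms: the expansion naturally holds in a $C^{2,\alpha}$ (or $C^2$) neighborhood, where admissibility and smooth dependence of the isometric embedding are controlled, whereas coercivity in \eqref{eq-intro-F} is stated against the $W^{2,2}$ seminorm $\int (\Delta \eta)^2$. Bridging these requires either proving that the remainder is in fact $o(\| \tau \|_{W^{2,2}}^2)$ uniformly on $C^{2,\alpha}$-bounded sets, or using elliptic regularity/interpolation to upgrade a $W^{2,2}$ bound on $\tau$ to weaker control on the $C^{2,\alpha}$ norm that still suffices. Carrying out the Taylor expansion of $\Ewy$ explicitly — tracking how $\hat{X}_\tau$, $\hat{\gamma}_\tau$, and the various curvature quantities in the Wang--Yau formula depend on $\tau$ — is where the bulk of the technical work lies.
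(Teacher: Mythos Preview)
The paper does not prove this theorem: it is quoted verbatim from \cite{MiaoTamXie11} with no argument given here, so there is nothing in the present paper to compare your proposal against. Your outline is the natural strategy and is essentially what the cited paper carries out, but since the present paper offers no proof, the comparison you are being asked for is vacuous.

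That said, a few comments on the sketch itself. Your identification of the norm mismatch as the main obstacle is exactly right, and the form of the remainder estimate $|R(\tau)| \le C\,\|\tau\|_{C^{2,\alpha}}\,\|\tau\|_{W^{2,2}}^2$ is what one needs; however, this does not follow from mere $C^3$-smoothness of $\tau \mapsto \Ewy(\Sigma,\tau)$ on $C^{2,\alpha}$ --- that would only give $|R(\tau)| \le C\,\|\tau\|_{C^{2,\alpha}}^3$. To get the mixed estimate you must go back to the explicit integral expression for $\Ewy$ and check term by term that each cubic-and-higher contribution is an integral over $\Sigma$ of an expression involving at most two factors of second derivatives of $\tau$ (contributing $\|\tau\|_{W^{2,2}}^2$) and remaining factors that are bounded pointwise by $\|\tau\|_{C^{2,\alpha}}$. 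This is where the real work is, as you anticipate. Your remark that constants may be modded out because $\Ewy$ is invariant under $\tau \mapsto \tau + c$ is correct (it corresponds to a time translation of the image in $\R^{3,1}$), and it is needed to pass from control of $\int(\Delta\tau)^2$ to control of the full $W^{2,2}$ norm.
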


Therefore, to obtain  the local minimality of $\mby(\Sigma, \Omega)$, it suffices to study the functional
 $F_{\gamma,H}(\eta)$.  The induced metric and the mean curvature function on  the surfaces $\{ S_r \}$
 in Theorems \ref{theorem: Intro-Large-sphere}  and \ref{thm:intro-small-spheres}  (after rescaling)
 are close to the standard metric $\sigma_0$ on the unit sphere $ \mS^2$ and the constant $ 2$ respectively.   Thus,
 one may ask whether  $F_{\gamma,H}(\eta)$ satisfies \eqref{eq-intro-F} if  the pair $(\gamma, H)$
is sufficiently close to $(\sigma_0, 2)$.

Our first task  in this paper is to derive some sufficient conditions on such a  pair $(\gamma, H)$  so that \eqref{eq-intro-F} is true.
Applying these  sufficient conditions, we can prove  Theorem \ref{thm-nearly-round-surface},
and part of Theorem \ref{thm:intro-small-spheres} which corresponds to cases (i) and (iii) above.

 The other part of Theorem \ref{thm:intro-small-spheres}, which corresponds to case (ii),
 turns out to be  more subtle. We will prove it  using more refined estimation on $(\mS^2, \sigma_0)$
 (see Theorem \ref{thm-small-spheres-bz} and Proposition \ref{prop-FQ-2}).
 Motivated by our proof of this part of Theorem \ref{thm:intro-small-spheres}, we
  also construct examples  to show that on $(\mS^2, \sigma_0)$, there are functions  $H$ which can be arbitrarily close to 2,
  but $F_{\sigma_0,H}(\eta)<0$ for some $\eta$.

We remark that the general validity of  \eqref{eq-intro-F} is of significance in the study of boundary behaviors of compact manifolds
of nonnegative scalar curvature.  If \eqref{eq-intro-F} is always true, it will impose
 a {\em necessary} condition for a positive function $ H$ on $\Sigma$  to arise as the mean curvature of $ \Sigma $ in some
compact Riemannian $3$-manifold  of nonnegative scalar curvature, bounded by $ (\Sigma, \gamma)$.
So far, a major known necessary condition is
 $
 \int_\Sigma (H_0-H) dv_\gamma \ge0
 $
 by the result of \cite{ShiTam02}.  It is worth to note that
our   examples of $ H$  above,  with $ F_{\sigma_0, H} (\eta) < 0$ for some $ \eta$, also satisfies
 $ \int_{\mS^2 } (2 - H) \dvo > 0 $. Thus,
 if the Brown-York mass always locally minimizes the Wang-Yau quasi-local energy in the time-symmetric situation,
 then  \eqref{eq-intro-F} will constitute a new necessary condition.

This  paper is organized as follows. In Section \ref{sect: preliminary},
we collect some lemmas which are to be used frequently in later sections.
In Section \ref{sect: sufficient}, we obtain  sufficient conditions for   \eqref{eq-intro-F} to hold.
In Section \ref{sect: application},
we apply the derived sufficient conditions to prove Theorem \ref{thm-nearly-round-surface}
which implies Theorem \ref{theorem: Intro-Large-sphere}. In Section \ref{sect: small-sphere-2},
we establish the positivity of $ F_{\gamma, H} $ on small geodesic spheres in Theorem \ref{thm-small-sphere-main}
which implies Theorem  \ref{thm:intro-small-spheres}.
There whether the scalar curvature vanishes at the center of a geodesic sphere makes an important
difference in the proof.  A main result related to the case of vanishing scalar curvature at the sphere  center is
 Theorem \ref{thm-small-spheres-bz}, which we prove using a  functional  inequality
on  the standard sphere $ (\mathbb{S}^2, \sigma_0)$ (Proposition \ref{prop-FQ-2})  which may have
independent interest.
In Section \ref{sect: no-fill-in}, we give examples of $H$ on the
standard unit sphere so that $F_{\sigma_0, H}(\eta)<0$  for some $\eta$
while $ \int_{\mS^2} ( 2 - H) \dvo > 0 $.  In the Appendix,
we list some elementary computational results, which are needed in Section \ref{sect: small-sphere-2}.

\section{Preliminaries} \label{sect: preliminary}
Throughout this paper, $\Sigma$ always denotes a closed $2$-surface that is diffeomorphic to  a  $2$-sphere.
Given a metric $\gamma$ of positive Gaussian curvature and a positive function $H$ on $\Sigma$, we let
\be
F_{\gamma, H} (\eta) =  \int_\Sigma \lf[ \frac{ ( \Delta \eta )^2 }{ H  } + ( H_0 - H ) | \nabla \eta  |^2 - \Pi_0 ( \nabla \eta ,\nabla \eta)\ri]dv_{\gamma}
\ee
for any $ \eta \in W^{2,2} (\Sigma)$. Here  $ \Delta$ and $ \nabla $ denote the Laplacian and the gradient on $ (\Sigma, \gamma)$,
$H_0$ and $\Pi_0$  are the mean curvature and
 the second fundamental form of $(\Sigma, \gamma)$ when it is isometrically
 embedded in $ \R^3$, and $d v_\gamma$ is the volume form on $(\Sigma, \gamma)$.
 We also denote the symmetric bilinear form associated  to $F_{\gamma, H}$ by $Q_{\gamma, H}$. Namely
 \be
Q_{\gamma, H} (\eta_1,\eta_2) =  \int_\Sigma \lf[ \frac{   \Delta \eta_1\cdot\Delta\eta_2  }{ H  } + ( H_0 - H ) \la  \nabla \eta_1, \nabla \eta_1\ra - \Pi_0 ( \nabla \eta_1 ,\nabla \eta_2)\ri]dv_{\gamma}.
\ee
All metrics on $ \Sigma$  below will be assumed to be smooth for simplicity.

We  recall  some  basic results  from \cite{MiaoTamXie11}.

\begin{lemma} \label{lem-earlier}
Let  $\gamma $ be a metric  of positive Gaussian curvature on $ \Sigma$.
Let
$ X = (X_1, X_2, X_3) : (\Sigma, \gamma) \rightarrow \R^3 $
be an isometric embedding of $(\Sigma, \gamma) $ in $ \R^3$.
The functional
$$
F_{\gamma, H_0} (\eta) =  \int_\Sigma \lf[ \frac{ ( \Delta \eta )^2 }{ H_0  }  - \Pi_0 ( \nabla \eta ,\nabla \eta)\ri]dv_{\gamma}
$$
satisfies:
\begin{enumerate}
\item[(i)] $ F_{\gamma, H_0} (\eta) \ge 0 $, $ \forall \ \eta \in W^{2,2} (\Sigma)$ .

\vh

\item[(ii)] $ F_{\gamma, H_0} (\eta) = 0 $ if and only if
$ \eta \in \L (\gamma) $,
where $$ \L(\gamma ) = \lf\{  a_0 + \sum_{i=1}^3 a_i X^i  \ | \ a_0, a_1, a_2, a_3 \ \mathrm{are \ arbitrary \ constants} \ri\}.  $$

\item[(iii)] If $\eta\in \L(\gamma)$, then $Q_{\gamma,H_0}(\eta, \phi )=0$,  $ \forall \ \phi \in W^{2,2}(\Sigma)$.

\end{enumerate}

\end{lemma}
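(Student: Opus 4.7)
The plan is to invoke Reilly's formula on the compact convex domain $\Omega \subset \R^3$ bounded by $X(\Sigma)$, which exists since $\gamma$ has positive Gaussian curvature (Nirenberg/Pogorelov). Given $\eta \in W^{2,2}(\Sigma)$, first assume $\eta$ is smooth and let $u$ be the harmonic extension of $\eta$ to $\ol{\Omega}$, i.e.\ $\Delta u = 0$ in $\Omega$ and $u|_\Sigma = \eta$. Because $\R^3$ is Ricci-flat and $u$ is harmonic, Reilly's formula reduces to
\be
-\int_\Omega |\Hess\, u|^2 \, dV \; = \; \int_\Sigma \lf[ H_0 u_\nu^2 + 2 u_\nu \Delta \eta + \Pi_0(\nabla \eta, \nabla \eta) \ri] \, dv_\gamma ,
\ee
where $u_\nu$ denotes the outward normal derivative of $u$ on $\Sigma \subset \R^3$.

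Completing the square in $u_\nu$, i.e.\ $H_0 u_\nu^2 + 2 u_\nu \Delta \eta = H_0 ( u_\nu + H_0^{-1} \Delta \eta )^2 - H_0^{-1} (\Delta \eta)^2$, and rearranging, I obtain the key identity
\be
F_{\gamma, H_0}(\eta) \; = \; \int_\Omega |\Hess\, u|^2 \, dV \; + \; \int_\Sigma H_0 \lf( u_\nu + \frac{\Delta \eta}{H_0} \ri)^2 \, dv_\gamma ,
\ee
which at once gives (i). For (ii), equality forces $\Hess\, u \equiv 0$ on $\Omega$, so $u = a_0 + \sum_{i=1}^3 a_i y^i$ for constants $a_0, \dots, a_3$ (where $y^i$ are Cartesian coordinates on $\R^3$), together with $u_\nu + H_0^{-1} \Delta \eta \equiv 0$ on $\Sigma$. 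The first condition yields $\eta = a_0 + \sum_i a_i X^i \in \L(\gamma)$. Conversely, for any $\eta \in \L(\gamma)$, the global affine extension $u = a_0 + \sum_i a_i y^i$ is harmonic with vanishing Hessian, and the standard identity $\Delta X^i = -H_0 \nu^i$, where $\nu = (\nu^1, \nu^2, \nu^3)$ is the outward unit normal of $X(\Sigma)$, gives $\Delta \eta = -H_0 \sum_i a_i \nu^i = -H_0 u_\nu$, so both terms on the right of the key identity vanish and hence $F_{\gamma, H_0}(\eta) = 0$.

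For (iii), I would polarize the quadratic form directly: for $\eta \in \L(\gamma)$ and any $\phi \in W^{2,2}(\Sigma)$, the nonnegativity from (i) combined with (ii) gives
\be
0 \; \le \; F_{\gamma, H_0}(\eta + t \phi) \; = \; 2 t\, Q_{\gamma, H_0}(\eta, \phi) + t^2 F_{\gamma, H_0}(\phi)
\ee
for all $t \in \R$, which forces $Q_{\gamma, H_0}(\eta, \phi) = 0$.

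The only real subtlety I anticipate is a mild regularity point: the classical Reilly formula requires $u \in C^2(\ol{\Omega})$, whereas the harmonic extension of a general $W^{2,2}$ boundary datum has only limited regularity near $\Sigma$. I will handle this by a routine approximation: replace $\eta$ by a sequence $\eta_k \in C^\infty(\Sigma)$ converging to $\eta$ in $W^{2,2}(\Sigma)$, apply the identity above to each $\eta_k$, and pass to the limit using standard elliptic estimates for the Dirichlet problem on the smooth convex domain $\Omega$ together with the continuity of both sides of the key identity in $W^{2,2}(\Sigma)$. Care must also be taken throughout with the sign conventions so that the orientation of $\nu$, the sign of $H_0$, and the identity $\Delta X^i = -H_0 \nu^i$ remain mutually consistent.
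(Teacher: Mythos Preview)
Your argument is correct. The paper does not prove parts (i) and (ii) in place but simply cites \cite[Corollary~3.1]{MiaoTamXie11}; your Reilly-formula computation is precisely the approach taken there, so you have in effect reconstructed the cited proof. For part (iii), your polarization argument is exactly what the paper's remark means by ``a direct consequence of (i) and (ii) by considering the first variation of $F_{\gamma,H_0}$.''
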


\begin{remark}
(i) and (ii) are proved in  \cite[Corollary 3.1]{MiaoTamXie11}. (iii) is a direct consequence of (i) and (ii) by considering the first variation of $F_{\gamma,H_0}$.
\end{remark}

\begin{remark}
Since any two isometric embeddings of $(\Sigma, \gamma)$ differ by a rigid motion in $\R^3$,
the space $\L(\gamma) $ defined above  is independent on the choice of $X$.
\end{remark}

Lemma \ref{lem-earlier} shows $ F_{\gamma, H} (\cdot) $ vanishes on $ \L (\gamma) $ when  $H = H_0$.
For an arbitrary  $H$, we have the following from (3.12) in \cite[Proposition 3.2]{MiaoTamXie11}.

\begin{lemma}\label{lem-earlier-2}
Suppose  $H>0$. For any $\eta  =a_0+\sum_{i=1}^3a_iX^i \in \mathcal{L}(\gamma)$,
$$
F_{\gamma,H}(\eta)=|a|^2\int_\Sigma(H_0-H)dv_\gamma+\int_\Sigma\la a,\nu_0\ra^2   \frac{(H_0-H)^2}H dv_\gamma,
$$
where $a=(a_1,a_2,a_3)$ and $\nu_0$ is the unit outward normal to $(\Sigma, \gamma)$ when it is  isometrically embedded in $\R^3$.
\end{lemma}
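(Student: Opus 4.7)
The plan is to write
\[
F_{\gamma,H}(\eta) = \bigl[F_{\gamma,H}(\eta) - F_{\gamma,H_0}(\eta)\bigr] + F_{\gamma,H_0}(\eta)
\]
and invoke Lemma \ref{lem-earlier}(ii), which guarantees $F_{\gamma,H_0}(\eta) = 0$ for $\eta \in \L(\gamma)$. The difference no longer contains the $\Pi_0$-term, since that term appears identically in both functionals, so what remains is
\[
F_{\gamma,H}(\eta) = \int_\Sigma (H_0 - H)\left[\frac{(\Delta\eta)^2}{H H_0} + |\nabla\eta|^2\right] dv_\gamma.
\]
The entire problem therefore reduces to computing the two pointwise quantities $(\Delta\eta)^2$ and $|\nabla\eta|^2$ for $\eta = a_0 + \sum_{i=1}^3 a_i X^i$.

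For this I would use two standard identities for the component functions of an isometric embedding $X \colon (\Sigma, \gamma) \hookrightarrow \R^3$. First, the tangential gradient $\nabla X^i$ is the orthogonal projection of the constant vector $e_i$ onto $T\Sigma$, so $\nabla X^i = e_i - \nu_0^i \nu_0$. Second, $\Delta X^i = -H_0 \nu_0^i$, which follows from the Gauss formula together with the vanishing of the ambient Hessian of the linear function $X^i$ on $\R^3$ (the sign is immaterial below since only $(\Delta\eta)^2$ appears). Writing $a = (a_1, a_2, a_3)$, these give
\[
|\nabla\eta|^2 = |a|^2 - \la a, \nu_0 \ra^2, \qquad (\Delta\eta)^2 = H_0^2 \la a, \nu_0 \ra^2.
\]

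The last step is pure algebra. Substituting into the displayed expression for $F_{\gamma,H}(\eta)$, the $|a|^2$ contribution comes only from $|\nabla\eta|^2$ and yields $|a|^2 \int_\Sigma (H_0 - H)\, dv_\gamma$. The $\la a, \nu_0\ra^2$ contributions combine through
\[
(H_0 - H)\left[\frac{H_0^2}{H H_0} - 1\right] = (H_0 - H)\cdot \frac{H_0 - H}{H} = \frac{(H_0 - H)^2}{H},
\]
which produces the second integral in the statement. I do not expect a genuine obstacle: the only structurally delicate term in $F_{\gamma,H}$, namely $\Pi_0(\nabla\eta,\nabla\eta)$, is disposed of cleanly by the subtraction against $F_{\gamma,H_0}$, and everything afterwards is a direct computation on the embedded surface.
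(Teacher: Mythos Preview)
Your argument is correct. The paper does not give its own proof of this lemma; it simply cites equation (3.12) in \cite[Proposition~3.2]{MiaoTamXie11}. Your derivation supplies exactly the computation that reference contains, and in fact the two pointwise identities you use, $(\Delta\eta)^2 = H_0^2 \la a,\nu_0\ra^2$ and $|\nabla\eta|^2 = |a|^2 - \la a,\nu_0\ra^2$, are invoked verbatim later in the paper (see the proof of Proposition~\ref{prop-perturbation-big-BY} and equation~\eqref{eq-formula-a-square}). So your approach is not merely correct but is the intended one.
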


 If $\int_\Sigma(H_0-H)dv_\gamma>0$, then Lemma \ref{lem-earlier-2} implies
$$
F_{\gamma,H}(\eta)\ge \beta\int_\Sigma(\Delta\eta)^2dv_\gamma
$$
for some $\beta>0$ for all $\eta\in \mathcal{L}(\gamma)$.

Next we estimate $ F_{\gamma, H_0} (\eta) $  for  $\eta$ that are $\gamma$-$L^2$ orthogonal to $ \L (\gamma)   $.

\begin{lemma}  \label{lem-supplement-1}
Let $ \sigma$ be a metric of positive Gaussian curvature  on $ \Sigma$.
There exist positive constants  $ {\delta}$ and $ \beta$ such that
if $\gamma$ is a metric on $ \Sigma $ satisfying
$  || \gamma - \sigma ||_{C^{2, \alpha}  (\Sigma, \sigma)} < {\delta} , $
then
$$ F_{\gamma, H_0} (\eta )
\ge \beta \int_\Sigma ( \Delta_\gamma \eta )^2 \ dv_\gamma
$$
for all $ \eta \in W^{2,2} (\Sigma)$ that is $\gamma$-$L^2$ orthogonal to $ \L (\gamma) $.
Here $ \Delta_\gamma$ denotes the Laplacian on $(\Sigma, \gamma)$.
\end{lemma}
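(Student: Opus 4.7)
The plan is a compactness--contradiction argument combined with the continuous dependence of the isometric embedding on the metric. Assume for contradiction that no such $\delta$ and $\beta$ exist; then there is a sequence $\gamma_n \to \sigma$ in $C^{2,\alpha}(\Sigma,\sigma)$ and functions $\eta_n \in W^{2,2}(\Sigma)$ with each $\eta_n$ being $\gamma_n$-$L^2$ orthogonal to $\L(\gamma_n)$, normalized so that $\int_\Sigma (\Delta_{\gamma_n}\eta_n)^2 \, dv_{\gamma_n} = 1$, yet $F_{\gamma_n,H_0(\gamma_n)}(\eta_n) \to 0$. I would derive a contradiction by passing to the limit.

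The first ingredient is continuous dependence of the data of $F$. By Nirenberg's theorem on isometric embeddings of surfaces of positive Gauss curvature together with its stability, for $\gamma$ sufficiently close to $\sigma$ in $C^{2,\alpha}$ there is an isometric embedding $X_\gamma : (\Sigma,\gamma)\hookrightarrow \R^3$ which, after fixing the rigid-motion ambiguity, depends continuously on $\gamma$ in $C^{2,\alpha}$. Thus $H_0(\gamma)$, $\Pi_0(\gamma)$, $dv_\gamma$, and the basis $\{1, X_\gamma^1, X_\gamma^2, X_\gamma^3\}$ of $\L(\gamma)$ converge uniformly to their $\sigma$-counterparts.

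Because constants lie in $\L(\gamma_n)$, orthogonality forces $\int_\Sigma \eta_n \, dv_{\gamma_n} = 0$, so the normalization together with Poincar\'e and standard elliptic estimates bounds $\eta_n$ uniformly in $W^{2,2}(\Sigma,\sigma)$. Extracting a subsequence, $\eta_n \rightharpoonup \eta_\infty$ weakly in $W^{2,2}$ and strongly in $W^{1,2}$. Weak lower semicontinuity of $\eta \mapsto \int (\Delta_\sigma \eta)^2 H_0(\sigma)^{-1} \, dv_\sigma$ together with strong $W^{1,2}$ convergence of the lower-order piece yields $F_{\sigma,H_0(\sigma)}(\eta_\infty) \le 0$, and Lemma~\ref{lem-earlier} puts $\eta_\infty \in \L(\sigma)$. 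Simultaneously, the orthogonality of $\eta_n$ to the $\gamma_n$-basis of $\L(\gamma_n)$ passes to the limit via the convergence of both the basis and the volume form, giving $\eta_\infty \perp_\sigma \L(\sigma)$. Hence $\eta_\infty = 0$.

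To close, I would estimate
\[
F_{\gamma_n,H_0(\gamma_n)}(\eta_n) \ge \lf(\sup H_0(\gamma_n)\ri)^{-1} \int (\Delta_{\gamma_n}\eta_n)^2 \, dv_{\gamma_n} - \int \Pi_0(\gamma_n)\lf(\nabla_{\gamma_n}\eta_n,\nabla_{\gamma_n}\eta_n\ri) dv_{\gamma_n}.
\]
The first term is bounded below by a positive constant by the normalization, while the second tends to $0$ by strong $W^{1,2}$ convergence $\eta_n \to 0$, contradicting $F_{\gamma_n,H_0(\gamma_n)}(\eta_n) \to 0$. The step I expect to be the main obstacle is ensuring that the $\gamma_n$-orthogonality of $\eta_n$ to the \emph{moving} subspace $\L(\gamma_n)$ passes cleanly to $\sigma$-orthogonality of $\eta_\infty$ to $\L(\sigma)$; this rests on the stability of Nirenberg's isometric embedding and requires keeping track of both the moving basis and the moving $L^2$ inner product at once.
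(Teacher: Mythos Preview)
Your argument is correct and follows essentially the same compactness--contradiction scheme as the paper's proof: assume failure along a sequence $\gamma_n\to\sigma$, invoke Nirenberg's stability to control $H_0$, $\Pi_0$, and the moving subspaces $\L(\gamma_n)$, extract a weak $W^{2,2}$ / strong $W^{1,2}$ limit, and contradict Lemma~\ref{lem-earlier}. The only difference is cosmetic: the paper normalizes $\int_\Sigma \eta_k^2\,dv_{\gamma_k}=1$ so that the limit is automatically nonzero (and reaches $F_{\sigma,H_0}(\eta)\le 0$ via the bilinear-form identity $F(\eta_k-\eta)\ge 0$ rather than stating weak lower semicontinuity directly), whereas your Laplacian normalization forces the extra closing estimate once you deduce $\eta_\infty=0$; both choices work and the step you flagged as the main obstacle is handled identically in the paper.
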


\begin{proof}   We argue by contradiction. Suppose it is not true,
then there exists a sequence of  metrics $ \{ \gamma_k \}$ on $ \Sigma$ and a sequence of
functions $ \{ \eta_k \} \subset W^{2,2} (\Sigma) $ such that
\be \label{eq-metric-close}
\lim_{k \rightarrow \infty} || \gamma_k - \sigma ||_{C^{2, \alpha} (\Sigma , \sigma)  } = 0 ,
\ee
\be  \label{eq-orthogonality}
\int_\Sigma \eta_k \phi \ d v_{\gamma_k} = 0 , \ \forall  \phi \in \L(\gamma_k), \ k=1,2, 3, \ldots  ,
\ee
and
\be \label{eq-FHG-1}
\begin{split}
F_{\gamma_k, H_0^k }(\eta_k)
 \le & \ \frac{1}{k}  \int_\Sigma ( \Delta_{k}  \eta_k )^2 \ dv_{\gamma_k}    .
\end{split}
\ee
Here $ H^k_0$ is the $H_0$ associated to $ \gamma_k$ and $ \Delta_k$ stands for  $ \Delta_{\gamma_k} $.
We  normalize $ \eta_k $ such that
\be \label{eq-L2-bd}
\int_\Sigma \eta_k^2 \ dv_{\gamma_k} = 1 .
\ee

Let $ X: (\Sigma, \sigma) \rightarrow \R^3$ be an isometric embedding of $(\Sigma, \sigma)$.
By \eqref{eq-metric-close} and the result of Nirenberg  \cite[p.353]{Nirenberg}, for each large $k$,
 there exists an isometric embedding $ X^k$ of $ (\Sigma, \gamma_k)$ in $ \R^3$
such that
\be \label{eq-embedding-close}
|| X^k - X ||_{C^{2, \alpha} (\Sigma, \sigma) } \le C || \gamma_k - \sigma ||_{C^{2, \alpha} (\Sigma, \sigma) }
\ee
where $ C$ is some constant depending only on $ \sigma$.
Let  $\Pi^k_0 $, $ \Pi_0$  be the second fundamental form  of  $X^k(\Sigma) $, $X(\Sigma)$ respectively.
\eqref{eq-embedding-close} implies that  $\Pi^{k}_0 $ and $\Pi_0$
(viewed as $(0,2)$ tensor fields  on $\Sigma$ through the pullback by $X^k$ and $X$) satisfy
\be \label{eq-Pi-close}
\lim_{k \rightarrow \infty} || \Pi_0^k - \Pi_0 ||_{C^{0, \alpha} (\Sigma, \sigma) } = 0 .
\ee
Consequently, $ \{ H_0^k \}$ converges to $ H_0$ uniformly on $\Sigma$, where
 $H_0$ is the mean curvature of   $X(\Sigma)$.

It follows from \eqref{eq-metric-close} - \eqref{eq-L2-bd},  \eqref{eq-Pi-close},  the interpolation inequality for Sobolev spaces and the $L^2$-estimates that
\be \label{eq-FHG-2}
\begin{split}
\int_\Sigma\frac{ ( \Delta_{k}  \eta_k )^2 }{ H_0^k  } \ d v_{\gamma_k} \le & \  \int_\Sigma  \Pi_0^k ( \nabla_{k}  \eta_k  ,\nabla_{k}  \eta_k  )  \  dv_{\gamma_k}
 +  \frac{1}{k}  \int_\Sigma ( \Delta_{k}  \eta_k )^2 \ dv_{\gamma_k}    \\
 \le & \ \frac12  \int_\Sigma\frac{ ( \Delta_{k}  \eta_k )^2 }{ H_0^k  } \ d v_{\gamma_k}  + C_1
 +  \frac{1}{k}  \int_\Sigma ( \Delta_{k}  \eta_k )^2 \ dv_{\gamma_k}
\end{split}
\ee
where  $\nabla_k$ is  the gradient on $(\Sigma, \gamma_k)$. Here and below,
 $ \{ C_i  \} $ always  denote  positive constants that are independent on $ k$.
Now \eqref{eq-FHG-2} shows
\be \label{eq-lap-k-bd}
\int_\Sigma ( \Delta_{\gamma_k} \eta_k )^2 \ d v_{\gamma_k} \le C_2 ,
\ee
which combined with  \eqref{eq-metric-close}, \eqref{eq-L2-bd} and the $L^2$-estimates implies
\be \label{eq-w22-k}
|| \eta_k ||_{W^{2,2}(\Sigma, \gamma_k)} \le C_3.
\ee
By  \eqref{eq-metric-close}, this in turn shows
 $  || \eta_k ||_{W^{2,2}(\Sigma, \sigma)} \le C_4 $.
Hence  $ \exists $  $ \eta \in W^{2,2} (\Sigma)$ such that, passing to a subsequence,
$\{ \eta_k \}$ converges to $\eta$ weakly in $W^{2,2}(\Sigma, \sigma)$ and strong in $W^{1,2} (\Sigma, \sigma)$.
By  \eqref{eq-metric-close} - \eqref{eq-orthogonality} and \eqref{eq-L2-bd} - \eqref{eq-embedding-close},
$\eta$ is $\sigma$-$L^2$ orthogonal to $\L(\sigma)$ with
$ \int_\Sigma \eta^2 dv_{\sigma} = 1 $.

We claim
\be \label{eq-F-eta-be-0}
F_{\gamma,H_0}(\eta)=  \int_\Sigma \frac{ ( \Delta \eta )^2 }{ H_0} - \Pi_0 ( \nabla \eta , \nabla \eta) \ d v_\sigma\le0 .
\ee
If this is true, then we have a contradiction by   Lemma \ref{lem-earlier}.

To prove \eqref{eq-F-eta-be-0},
we apply Lemma \ref{lem-earlier} to obtain  $F_{\gamma_k,H_0^k}(\eta_k-\eta)\ge0$.
By \eqref{eq-metric-close}, \eqref{eq-FHG-1} \eqref{eq-embedding-close}, \eqref{eq-Pi-close} and \eqref{eq-lap-k-bd}, we have
\be \label{eq-before-limit}
\begin{split}
\frac{C_2}k \ge&  \ F_{\gamma_k,H_0^k}(\eta_k)\\
\ge & \ 2Q_{\gamma_k,H_0^k}(\eta_k,\eta)-F_{\gamma_k,H_0^k}(\eta)\\
= & \ 2Q_{\gamma ,H_0 }(\eta_k,\eta)-F_{\gamma,H_0 }(\eta)+o(1)
\end{split}
\ee
as $k\to\infty$. Using the fact that $\eta_k$ converge to $\eta$ weakly in $W^{2,2}(\Sigma)$
and strongly in $W^{1,2}(\Sigma)$, we conclude from \eqref{eq-before-limit} that
\bee
\begin{split}
0\ge& \ 2Q_{\gamma ,H_0 }(\eta,\eta)-F_{\gamma,H_0 }(\eta)\\
=& \ F_{\gamma,H_0}(\eta)
\end{split}
\eee
which verifies \eqref{eq-F-eta-be-0}. The Lemma is proved.
\end{proof}

Often  we need  estimates of    $\int_\Sigma |\nabla \eta |^2 d v_\gamma $ by
$\int_\Sigma (\Delta \eta )^2 d v_\gamma $ which depend explicitly on the metric $ \gamma$.
This can be given in terms of   eigenvalues of $(\Sigma, \gamma)$.

 \begin{lemma}\label{lma-Laplace-gradient}
 Let $(M, g)$ be a compact Riemannian manifold without boundary.
 Let $ E_k $ be the space of eigenfunctions with the $k$-th nonzero eigenvalue $\mu_k $ of $(M, g)$.
 Let   $E_0 $ be the space of constant functions.
 Suppose $ \phi \in W^{2,2} (M)$ is $g$-$L^2$ orthogonal to $ E_0, E_1, \ldots, E_{k-1} $,
 then
        \bee
       \mu_k\int_{M}|\nabla  \phi|^2dv_g \le \int_{M} (\Delta \phi)^2 dv_g.
       \eee
        In particular,
        \bee
       \mu_1\int_{M }|\nabla  \phi|^2dv_g  \le \int_{M } ( \Delta \phi)^2 dv_g, \ \forall \ \phi \in W^{2,2}(M) .
       \eee
       \end{lemma}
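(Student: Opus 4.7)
The plan is a direct application of the spectral theorem for the Laplace--Beltrami operator on the compact closed manifold $(M, g)$. By that theorem, there is an $L^2$-orthonormal basis $\{\phi_{j,\ell}\}$ of $L^2(M)$ consisting of smooth eigenfunctions with $-\Delta \phi_{j,\ell} = \mu_j \phi_{j,\ell}$, where $\mu_0 = 0$, $\{\mu_j\}_{j \ge 1}$ is the increasing sequence of distinct nonzero eigenvalues, and $\{\phi_{j,\ell}\}_\ell$ is an orthonormal basis of the eigenspace $E_j$.

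Expanding $\phi$ as $\phi = \sum_{j,\ell} c_{j,\ell} \phi_{j,\ell}$ in $L^2(M)$, the orthogonality hypothesis $\phi \perp E_0, E_1, \ldots, E_{k-1}$ forces $c_{j,\ell} = 0$ for all $j < k$. Since $\phi \in W^{2,2}(M)$, standard elliptic regularity shows the series converges to $\phi$ in $W^{2,2}(M)$, so integration by parts together with Parseval's identity yields
\begin{equation*}
\int_M |\nabla \phi|^2 \, dv_g = -\int_M \phi \, \Delta \phi \, dv_g = \sum_{j \ge k} \sum_\ell \mu_j \, c_{j,\ell}^2,
\end{equation*}
and
\begin{equation*}
\int_M (\Delta \phi)^2 \, dv_g = \sum_{j \ge k} \sum_\ell \mu_j^2 \, c_{j,\ell}^2 \ge \mu_k \sum_{j \ge k, \ell} \mu_j \, c_{j,\ell}^2 = \mu_k \int_M |\nabla \phi|^2 \, dv_g,
\end{equation*}
where the inequality uses $\mu_j \ge \mu_k$ for $j \ge k$. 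The second statement of the lemma is the special case $k=1$, since any $\phi \in W^{2,2}(M)$ can be replaced by $\phi - \bar{\phi}$ without affecting either side, where $\bar{\phi}$ is the mean value of $\phi$.

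There is no real obstacle here; the argument is entirely standard spectral theory. The only technical point worth noting is the convergence of the eigenfunction expansion in $W^{2,2}(M)$, which follows from the compactness of the resolvent $(-\Delta + 1)^{-1}$ on $L^2(M)$ together with elliptic regularity: the condition $\Delta \phi \in L^2(M)$ is equivalent to $\sum_{j,\ell} \mu_j^2 \, c_{j,\ell}^2 < \infty$, which in turn guarantees the series and all its termwise first and second derivatives converge in $L^2$.
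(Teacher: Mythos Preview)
Your proof is correct. The paper, however, takes a shorter and more elementary route that avoids the full spectral expansion: it simply applies Cauchy--Schwarz to the identity $\int_M |\nabla \phi|^2 = -\int_M \phi\,\Delta\phi$ to obtain
\[
\left(\int_M |\nabla\phi|^2\right)^2 \le \int_M \phi^2 \cdot \int_M (\Delta\phi)^2,
\]
and then invokes the Rayleigh quotient characterization $\mu_k \le \int_M |\nabla\phi|^2 / \int_M \phi^2$ for $\phi$ orthogonal to $E_0,\dots,E_{k-1}$. Chaining the two inequalities gives the result immediately. Your approach is equally valid and perhaps more transparent about where the hypothesis enters, but it requires the machinery of the eigenfunction basis and the $W^{2,2}$ convergence of the expansion; the paper's argument needs only Cauchy--Schwarz and the variational principle, and is a two-line proof.
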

\begin{proof}
Since
$$
\int_{M}|\nabla \phi|^2 dv_g =-\int_{\Sigma}\phi\Delta\phi dv_g \le  \lf(\int_{M}\phi^2dv_g \ri)^\frac12\lf(\int_{\Sigma}(\Delta\phi)^2
dv_g \ri)^\frac12,
$$
we have
$$
\frac{ \int_M | \nabla \phi |^2 d v_g }{ \int_M \phi^2 d v_g } \le \frac{ \int_M ( \Delta \phi )^2 d v_g }{ \int_M | \nabla \phi |^2 d v_g } ,
$$
from which the claim follows.
\end{proof}

\section{Sufficient conditions for the positivity of $F_{\gamma, H}$} \label{sect: sufficient}

 In this section, we provide some sufficient conditions guaranteeing
 the positivity of $F_{\gamma, H}$.
 Note that Lemma \ref{lem-supplement-1} implies
 $$  \inf_{ \eta \in  \L(\gamma)^\perp }    \frac{ F_{\gamma, H_0}  (\eta ) }{ \int_\Sigma ( \Delta \eta )^2 d v_\gamma }
  > 0  $$
 where $ \L(\gamma)^\perp $ is the space of functions that are $ \gamma$-$L^2$ orthogonal to $ \L (\gamma)$.

\begin{prop} \label{prop-perturbation-big-BY}
Let $ \gamma$ be a metric of positive Gaussian curvature on $ \Sigma$.
Let $\beta$ be a positive constant such that
\be \label{eq-F-beta}
F_{\gamma,H_0}(\eta)\ge \beta\int_\Sigma (\Delta\eta)^2dv_\gamma, \ \forall \ \eta \in \L(\gamma)^\perp.
\ee
Suppose the first nonzero eigenvalue of $(\Sigma,\gamma)$ is at least $\lambda>0$. Then $ \exists $
$\delta>0$, depending only on $ \beta$, $\lambda,$ $ H_0$ and a given constant $ \a > 0 $, such that if $H\ge\a $ is a  function on $\Sigma$
satisfying
\begin{enumerate}

\item[(a)]  $ \displaystyle  \int_\Sigma (H_0 - H ) \ d v_\gamma  > 0 $

\item[(b)] $ \displaystyle \sup_\Sigma \left|( H_0 - H)_- \right|  < \delta  $  and
$ \displaystyle
  \frac{ \int_\Sigma |H_0 - H|^2 \ d v_\gamma }{ \int_\Sigma (H_0 - H ) \ d v_\gamma }  < \delta       $,
\end{enumerate}
where $(H_0-H)_{-} =\min\{H_0-H,0\}$,
then
$ \displaystyle
F_{\gamma, H} (\eta) \ge \tilde{\beta} \int_\Sigma ( \Delta \eta)^2 \ dv_\gamma
$,  $ \forall $  $\eta \in W^{2,2} (\Sigma) $, where  $\tilde \beta>0$ is a constant  depending  only on  $H_0$ and $\beta$.
\end{prop}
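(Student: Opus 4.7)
The plan is to decompose any $\eta \in W^{2,2}(\Sigma)$ orthogonally as $\eta = \eta_1 + \eta_2$, where $\eta_1 = a_0 + \sum_{i=1}^3 a_i X^i \in \L(\gamma)$ and $\eta_2 \in \L(\gamma)^\perp$, and bound each piece of
\[
F_{\gamma,H}(\eta) = F_{\gamma,H}(\eta_1) + 2 Q_{\gamma,H}(\eta_1,\eta_2) + F_{\gamma,H}(\eta_2)
\]
separately. The decisive simplification is that Lemma \ref{lem-earlier}(iii) gives $Q_{\gamma,H_0}(\eta_1,\cdot)\equiv 0$, so the cross term reduces to the pure perturbation
\[
Q_{\gamma,H}(\eta_1,\eta_2) = \int_\Sigma \frac{H_0-H}{H_0 H}\,\Delta\eta_1\,\Delta\eta_2\,dv_\gamma + \int_\Sigma (H_0-H)\,\la\nabla\eta_1,\nabla\eta_2\ra\,dv_\gamma,
\]
every integrand of which carries a factor of $H_0-H$.

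For the diagonal pieces, Lemma \ref{lem-earlier-2} together with condition (a) gives $F_{\gamma,H}(\eta_1)\ge|a|^2\int_\Sigma (H_0-H)\,dv_\gamma$. For $F_{\gamma,H}(\eta_2)$, writing $F_{\gamma,H} = F_{\gamma,H_0} + \int\frac{H_0-H}{H_0 H}(\Delta\eta)^2\,dv_\gamma + \int(H_0-H)|\nabla\eta|^2\,dv_\gamma$ and bounding the two correction integrals by means of the $L^\infty$-part of condition (b) (namely $\sup|(H_0-H)_-|<\delta$) together with Lemma \ref{lma-Laplace-gradient} applied to $\eta_2$ (which is orthogonal to constants, since $1\in\L(\gamma)$), the hypothesis \eqref{eq-F-beta} yields $F_{\gamma,H}(\eta_2)\ge(\beta-C_1\delta)\int_\Sigma(\Delta\eta_2)^2\,dv_\gamma$ for a constant $C_1$ depending on $\alpha,\lambda,H_0$. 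For the cross term, the pointwise identities $\Delta\eta_1 = -H_0\la a,\nu_0\ra$ (since $\Delta X^i = -H_0\nu_0^i$) and $|\nabla\eta_1|^2\le 2|a|^2$ (from $\sum_i|\nabla X^i|^2 = 2$), together with Cauchy--Schwarz and Lemma \ref{lma-Laplace-gradient}, yield
\[
2|Q_{\gamma,H}(\eta_1,\eta_2)| \le C_2 |a|\,\|H_0-H\|_{L^2(\Sigma,\gamma)}\,\|\Delta\eta_2\|_{L^2(\Sigma,\gamma)}
\]
for a constant $C_2$ depending on $\alpha,\lambda,H_0$. The crucial step is now to invoke the $L^2/L^1$-part of condition (b), i.e.\ $\|H_0-H\|_{L^2}^2 < \delta\int_\Sigma(H_0-H)\,dv_\gamma$, and an AM--GM split, giving
\[
2|Q_{\gamma,H}(\eta_1,\eta_2)| \le \epsilon\delta\,|a|^2\int_\Sigma(H_0-H)\,dv_\gamma + \frac{C_2^2}{\epsilon}\int_\Sigma(\Delta\eta_2)^2\,dv_\gamma
\]
for any $\epsilon>0$.

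Assembling the three estimates,
\[
F_{\gamma,H}(\eta) \ge (1-\epsilon\delta)|a|^2\int_\Sigma(H_0-H)\,dv_\gamma + \Bigl(\beta - C_1\delta - \tfrac{C_2^2}{\epsilon}\Bigr)\int_\Sigma(\Delta\eta_2)^2\,dv_\gamma.
\]
Fixing $\epsilon$ large enough that $C_2^2/\epsilon\le\beta/2$, and then shrinking $\delta$ so that $1-\epsilon\delta\ge 1/2$ and $C_1\delta\le\beta/4$, both coefficients become strictly positive, with $\delta$ depending only on $\beta,\lambda,H_0,\alpha$ as required. Finally, the elementary bound $\int(\Delta\eta)^2\le 2\int(\Delta\eta_1)^2 + 2\int(\Delta\eta_2)^2$ combined with $\int(\Delta\eta_1)^2 = \int_\Sigma H_0^2\la a,\nu_0\ra^2\,dv_\gamma\le(\sup H_0)^2|\Sigma|\,|a|^2$ converts the lower bound into $F_{\gamma,H}(\eta)\ge\tilde\beta\int_\Sigma(\Delta\eta)^2\,dv_\gamma$ for some $\tilde\beta>0$. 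The main obstacle is that $F_{\gamma,H_0}$ is degenerate on the four-dimensional space $\L(\gamma)$, so positivity there has to come entirely from the two conditions on $H$; what makes the argument close is that the $L^2/L^1$-ratio bound in (b) is much stronger than any uniform $L^\infty$-smallness of $H_0-H$ alone would be, and this is precisely what lets the cross term be absorbed into $F_{\gamma,H}(\eta_1)$ and $F_{\gamma,H}(\eta_2)$.
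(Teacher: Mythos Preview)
Your proof is correct and follows essentially the same strategy as the paper's: decompose $\eta=\eta_1+\eta_2$ along $\L(\gamma)$, use Lemma~\ref{lem-earlier-2} for the $\eta_1$ piece, the hypothesis \eqref{eq-F-beta} together with the $L^\infty$ bound on $(H_0-H)_-$ for the $\eta_2$ piece, and absorb the cross term using the $L^2/L^1$ ratio from condition (b) via Cauchy--Schwarz. The organization differs only cosmetically: the paper normalizes $|a|=1$ (treating $a=0$ separately) and writes out the difference $F_{\gamma,H}(\eta)-F_{\gamma,H_0}(\eta)$ in one block, whereas you keep $|a|$ general and invoke Lemma~\ref{lem-earlier}(iii) explicitly to isolate the perturbative cross term; the paper also uses the sharp identities $(\Delta\eta_1)^2=H_0^2\la a,\nu_0\ra^2$ and $|\nabla\eta_1|^2=|a|^2-\la a,\nu_0\ra^2$ where you use the weaker (but sufficient) bounds $|\Delta\eta_1|\le(\sup H_0)|a|$ and $|\nabla\eta_1|^2\le 2|a|^2$.

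One remark worth making explicit: in both your argument and the paper's, the final constant $\tilde\beta$ in fact depends on $\int_\Sigma(H_0-H)\,dv_\gamma$ as well, since converting $\tfrac12|a|^2\int(H_0-H)$ into a multiple of $\int(\Delta\eta_1)^2$ uses the lower bound $\tfrac{\int(H_0-H)}{(\sup H_0)^2|\Sigma|}$. The paper's assertion that $\tilde\beta$ depends only on $H_0$ and $\beta$ is thus a slight overstatement; your write-up is actually more honest on this point by just asserting ``some $\tilde\beta>0$''. This has no effect on the applications in Sections~\ref{sect: application}--\ref{sect: small-sphere-2}.
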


\begin{proof} Given any constant $\alpha > 0 $, let  $H \ge \a $ be  a function on $ \Sigma$ satisfying (a) and (b) with
$\delta>0$ to be chosen later.

Let $ X = (X_1, X_2, X_3)$ be an isometric embedding of $(\Sigma, \gamma)$ in $ \R^3$.
Given any $\eta \in W^{2,2}(\Sigma)$, decompose
$
\eta = \eta_1 + \eta_2
$
where $ \eta_1 = a_0 + \sumi a_i X_i  \in \L  (\gamma) $ and $ \eta_2 \in  \L(\gamma)^\perp$. Let  $a  =(a_1, a_2, a_3 )  $.
If $a=0$, then
\be\label{eq-a=0}
\begin{split}
F_{\gamma, H} (\eta) =& \ F_{\gamma,H_0}(\eta_2)+\int_\Sigma(H_0-H)\lf(\frac{(\Delta \eta_2)^2}{HH_0}+|\nabla \eta_2 |^2\ri)dv_\gamma\\
\ge& \ \beta \int_\Sigma (\Delta \eta_2 )^2dv_\gamma-\delta\lf(\frac1{\a\inf_{\Sigma}H_0}+\frac1\lambda\ri) \int_\Sigma(\Delta \eta_2)^2dv_\gamma\\
=& \ \lf[\beta-\delta\lf(\frac1{\a\inf_{\Sigma}H_0}+\frac1\lambda\ri)\ri]\int_\Sigma (\Delta\eta )^2dv_\gamma
\end{split}
\ee
where we have used \eqref{eq-F-beta}  and Lemma \ref{lma-Laplace-gradient}.

Now suppose $a\neq0$, we may normalize $a$ so that $|a|=1$. Then  for any $\e_1, \e_2>0$,
\be\label{eq-anot=0}
\begin{split}
& \ F_{\gamma, H} (\eta)-F_{\gamma,H_0}(\eta)\\
 = & \ \int_\Sigma\lf[\lf(\frac1H-\frac1{H_0}\ri) (\Delta\eta)^2+(H_0-H)|\nabla \eta|^2\ri]dv_\gamma\\
= & \ \int_\Sigma\lf[\lf(\frac1H-\frac1{H_0}\ri) (\Delta\eta_1)^2+(H_0-H)|\nabla \eta_1|^2\ri]dv_\gamma\\
& \ +\int_\Sigma\lf[\lf(\frac1H-\frac1{H_0}\ri) (\Delta\eta_2)^2+(H_0-H)|\nabla \eta_2|^2\ri]dv_\gamma\\
& \ +2\int_\Sigma\lf[\lf(\frac1H-\frac1{H_0}\ri)\Delta\eta_1\cdot\Delta\eta_2
+(H_0-H)\la\nabla\eta_1,\nabla\eta_2\ra \ri]dv_\gamma\\
\ge & \ \int_\Sigma (H_0-H)dv_\gamma-\delta\lf(\frac1{\a\inf_{\Sigma}H_0}+\frac1\lambda\ri) \int_\Sigma (\Delta\eta_2)^2dv_\gamma\\
& \ - \e_1^{-1}\int_\Sigma\lf(\frac1H-\frac1{H_0}\ri)^2(\Delta\eta_1)^2dv_\gamma
-\e_1\int_\Sigma(\Delta\eta_2)^2dv_\gamma\\
& \ -\e_2^{-1}\int_\Sigma\lf(H-H_0\ri)^2|\nabla\eta_1|^2dv_\gamma
-\e_2\int_\Sigma|\nabla\eta_2|^2dv_\gamma\\
\ge &
\ \lf( 1-\e_1^{-1} \a^{-2} \delta -  \e_2^{-1} \delta \ri) \int_\Sigma(H_0-H)dv_\gamma
\\
&-\lf(  \delta\lf(\frac1{\a\inf_{\Sigma}H_0}+\frac1\lambda\ri)+\e_1+\lambda^{-1}\e_2\ri)  \int_\Sigma(\Delta\eta_2)^2dv_\gamma
\end{split}
\ee
where we have used Lemma \ref{lem-earlier-2},  the fact $(\Delta\eta_1)^2= \la a,\nu_0\ra^2 H_0 ^2$ and $|\nabla\eta_1|^2=1-\la a,\nu_0\ra^2$. On the other hand, it follows from Lemma  \ref{lem-earlier} and \eqref{eq-F-beta} that
\bee
F_{\gamma,H_0}(\eta)=
F_{\gamma,H_0}(\eta_2)\ge\beta\int_\Sigma(\Delta\eta_2)^2dv_\gamma.
\eee
Hence
\bee
\begin{split}
 F_{\gamma, H} (\eta)\ge&\lf( 1-\e_1^{-1} \a^{-2} \delta -  \e_2^{-1} \delta \ri) \int_\Sigma(H_0-H)dv_\gamma
\\
& + \lf[\beta-\lf(\delta\lf(\frac1{\a\inf_{\Sigma}H_0}+\frac1\lambda\ri)+\e_1+\lambda^{-1}\e_2\ri)\ri]\int_\Sigma(\Delta\eta_2)^2dv_\gamma .
\end{split}
\eee
Let  $\e_1=\lambda^{-1}\e_2=\frac\beta4$ and choose  $\delta>0$  such  that
$ 1- ( \e_1^{-1} \a^{-2} +  \e_2^{-1} ) \delta  \ge \frac12$, and $\delta\lf(\frac1{\a\inf_{\Sigma}H_0}+\frac1\lambda\ri)\le \frac\beta4$, then
$$
F_{\gamma,H }(\eta)\ge \frac12\int_\Sigma(H_0-H)dv_\gamma+\frac\beta 4\int_\Sigma(\Delta\eta_2)^2dv_\gamma.
$$
Combining this with \eqref{eq-a=0} and the fact $(\Delta\eta_1)^2\le H_0^2$, we conclude that the Proposition is true.
\end{proof}

In  \cite[Theorem 3.1]{MiaoTamXie11}, it was proved that $ F_{\gamma, H} $ is positive definite if
$ H \le H_0 $ and $ \int_\Sigma (H_0 - H) d v_\gamma > 0 $.
By  arguments similar to the proof of Proposition \ref{prop-perturbation-big-BY},
it can be shown  that $ F_{\gamma, H}$ remains positive definite if $ H $ is allowed to be slightly bigger than $ H_0$.
First, we give a quantitative estimate of the case $ H \le H_0$.

\begin{lemma}\label{lem: negative-part}
Let $ \gamma $ be a metric of positive Gaussian curvature on $ \Sigma$.
Let $\beta>0$ be a constant such that
$$
F_{\gamma,H_0}(\eta)\ge \beta\int_\Sigma (\Delta\eta)^2dv_\gamma, \ \forall \ \eta \in \L(\gamma)^\perp .
$$
Let $\lambda>0$ be a lower bound for the first nonzero eigenvalue of $(\Sigma, \gamma)$.
Given any positive function  $ H $  on $ \Sigma$ with  $ H \le H_0$,  let  $ \alpha > 0 $ be a lower bound of $ H$.
Then
\bee
F_{\gamma,H}(\eta_1 + \eta_2 ) \ge \frac{ \frac\beta2  }{ \a^{-1}+\lambda^{-1}\sup_\Sigma H_0
+\frac\beta2 }\int_\Sigma(H_0-H) dv_\gamma+\frac\beta2\int_\Sigma(\Delta\eta_2)^2dv_\gamma
\eee
for any $ \eta_2 \in \L(\gamma)^\perp $ and  $ \eta_1=a_0+\sum_{i=1}^3 a_iX^i \in \L(\gamma)$ with
$a = ( a_1, a_2, a_3) $  being  a unit vector.
\end{lemma}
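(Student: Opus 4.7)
The plan is to start from the identity
\[
F_{\gamma,H}(\eta) = F_{\gamma,H_0}(\eta) + \int_\Sigma \tfrac{H_0-H}{H H_0}(\Delta \eta)^2 dv_\gamma + \int_\Sigma (H_0 - H)|\nabla \eta|^2 dv_\gamma,
\]
which is immediate from the definition of $F_{\gamma,H}$. Under the hypothesis $H \le H_0$, all three terms on the right are nonnegative. The first is handled by Lemma \ref{lem-earlier}: since $\eta_1 \in \L(\gamma)$, parts (ii) and (iii) give $F_{\gamma, H_0}(\eta_1 + \eta_2) = F_{\gamma, H_0}(\eta_2)$, which the hypothesis on $\beta$ bounds below by $\beta \int_\Sigma (\Delta \eta_2)^2 dv_\gamma$. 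What remains is to extract a positive multiple of $\int_\Sigma (H_0 - H) dv_\gamma$ out of the two weighted integrals of $(\Delta\eta)^2$ and $|\nabla\eta|^2$, at the cost of as little of $\int_\Sigma (\Delta \eta_2)^2 dv_\gamma$ as possible.

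The key device is the elementary Young-type inequality $(a+b)^2 \ge (1-\epsilon) a^2 + (1 - \tfrac{1}{\epsilon}) b^2$, valid for every $\epsilon > 0$, applied pointwise with a \emph{common} parameter $\epsilon \in (0,1)$ to both $\Delta \eta = \Delta\eta_1 + \Delta\eta_2$ and $\nabla\eta = \nabla\eta_1 + \nabla\eta_2$. Using the standard identities $\Delta \eta_1 = -H_0 \la a, \nu_0\ra$ and $|\nabla \eta_1|^2 = 1 - \la a, \nu_0\ra^2$, the $\eta_1$-contributions to the two weighted integrals combine into
\[
(1-\epsilon) \int_\Sigma (H_0-H)\lf[\tfrac{H_0}{H}\la a, \nu_0\ra^2 + (1 - \la a, \nu_0\ra^2)\ri] dv_\gamma \ge (1-\epsilon) \int_\Sigma (H_0 - H) dv_\gamma,
\]
the last inequality using $H_0/H \ge 1$. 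The $\eta_2$-contributions carry the negative factor $1 - \tfrac{1}{\epsilon}$, so upper bounding them yields an overall lower bound; the estimate $\tfrac{H_0-H}{H H_0} \le \tfrac{1}{\alpha}$ together with Lemma \ref{lma-Laplace-gradient} (applicable because $\L(\gamma)$ contains the constants, so $\eta_2$ is $\gamma$-$L^2$ orthogonal to constants) shows these two integrals are jointly at most $(\alpha^{-1} + \lambda^{-1}\sup_\Sigma H_0)\int_\Sigma (\Delta\eta_2)^2 dv_\gamma$. Assembling,
\[
F_{\gamma,H}(\eta_1+\eta_2) \ge (1-\epsilon)\int_\Sigma (H_0-H) dv_\gamma + \lf[\beta - (\tfrac{1}{\epsilon}-1)(\alpha^{-1}+\lambda^{-1}\sup_\Sigma H_0)\ri] \int_\Sigma (\Delta\eta_2)^2 dv_\gamma.
\]

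The final step is to balance by choosing $\epsilon$ so that the coefficient of $\int(\Delta \eta_2)^2$ is exactly $\beta/2$, i.e.\ $\tfrac{1}{\epsilon} - 1 = \frac{\beta/2}{\alpha^{-1}+\lambda^{-1}\sup_\Sigma H_0}$; a brief computation then gives $1-\epsilon = \frac{\beta/2}{\alpha^{-1}+\lambda^{-1}\sup_\Sigma H_0 + \beta/2}$, matching the prefactor in the statement. The one point requiring care is the sign accounting: because $1 - \tfrac{1}{\epsilon}$ is negative, upper bounds on the $\eta_2$-integrals have to be turned into lower bounds in the final estimate. The nonroutine choice in the setup is to use a single $\epsilon$ in both Young splittings, which is what causes the $\eta_1$-contributions to aggregate into the bracket $\tfrac{H_0}{H}\la a,\nu_0\ra^2 + (1 - \la a, \nu_0\ra^2) \ge 1$ and hence to produce the clean $\int (H_0-H) dv_\gamma$; beyond that, the argument is routine.
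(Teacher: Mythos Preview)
Your proof is correct and follows essentially the same approach as the paper: write $F_{\gamma,H}(\eta)-F_{\gamma,H_0}(\eta)$ as the two nonnegative weighted integrals, apply the pointwise Young inequality $(a+b)^2\ge(1-\epsilon)a^2+(1-\epsilon^{-1})b^2$ with a single $\epsilon\in(0,1)$ to split the $\eta_1$- and $\eta_2$-contributions, use the identities $(\Delta\eta_1)^2=H_0^2\langle a,\nu_0\rangle^2$ and $|\nabla\eta_1|^2=1-\langle a,\nu_0\rangle^2$ to reduce the $\eta_1$-part to $(1-\epsilon)\int_\Sigma(H_0-H)dv_\gamma$, bound the $\eta_2$-part via Lemma~\ref{lma-Laplace-gradient}, and then choose $\epsilon$ to make the coefficient of $\int_\Sigma(\Delta\eta_2)^2dv_\gamma$ equal $\beta/2$. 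One minor remark: the second assertion of Lemma~\ref{lma-Laplace-gradient} already holds for all $\phi\in W^{2,2}$, so the orthogonality of $\eta_2$ to constants is not actually needed there.
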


\begin{proof}
 Similar to \eqref{eq-anot=0}, using the fact  $H_0\ge H$, we have for any constant $ 0 < \epsilon < 1 $ that
\bee
\begin{split}
&F_{\gamma, H} (\eta)-F_{\gamma,H_0}(\eta)\\
\ge&(1-\e)\int_\Sigma\lf[\lf(\frac1H-\frac1{H_0}\ri) (\Delta\eta_1)^2+(H_0-H)|\nabla \eta_1|^2\ri]dv_\gamma\\
&+(1-\e^{-1})\int_\Sigma\lf[\lf(\frac1H-\frac1{H_0}\ri) (\Delta\eta_2)^2+(H_0-H)|\nabla \eta_2|^2\ri]dv_\gamma\\
\ge & \ (1-\e)\int_\Sigma (H_0-H)dv_\gamma+(1-\e^{-1})\lf(\a^{-1}+\lambda^{-1}\sup_\Sigma H_0\ri)\int_\Sigma(\Delta\eta_2)^2dv_\gamma .
\end{split}
\eee
Hence
\be\label{eq-neg-2}
\begin{split}
 F_{\gamma, H} (\eta)\ge & \ (1-\e)\int_\Sigma (H_0-H)dv_\gamma\\
 & \ +\lf[\beta+(1-\e^{-1})\lf(\a^{-1}+\lambda^{-1}\sup_\Sigma H_0\ri)\ri]\int_\Sigma(\Delta\eta_2)^2dv_\gamma
 \end{split}
 \ee
 where we can  choose $ 0 < \epsilon < 1 $ such that
 $$
 \beta+(1-\e^{-1})\lf(\a^{-1}+\lambda^{-1}\sup_\Sigma H_0\ri)=\frac\beta2 .
 $$
 The Lemma now follows from  \eqref{eq-neg-2}.
\end{proof}

\begin{prop}  \label{prop: negative-part}
Let $ \gamma $ be a metric of positive Gaussian curvature on $ \Sigma$. Let $\beta>0$ be a constant such that
$$
F_{\gamma,H_0}(\eta)\ge \beta\int_\Sigma (\Delta\eta)^2dv_\gamma, \ \forall \ \eta \in \L(\gamma)^\perp .
$$
Let $\lambda>0$ be a lower bound for the first nonzero eigenvalue of $(\Sigma, \gamma)$.
Given a positive function $H$ on $ \Sigma$, let $ (H_0 - H)_{-}=\min\{H_0-H,0\} $.
Let $ \alpha > 0 $ be any  lower bound of $ H$.  Define
$\a_1=\min\{\a,\inf_\Sigma H_0\}$,
$$
\theta=\frac{ \frac\beta2 }{ \a_1^{-1}+\lambda^{-1}\sup_\Sigma H_0 +  \frac\beta2}
\ \ \mathrm{and} \ \
\delta=\frac\beta4\lf(\frac1{\a\inf_\Sigma H_0}+\frac1\lambda\ri)^{-1}.
$$
Suppose
\begin{enumerate}
\item[(i)] $ \displaystyle  \theta \int_\Sigma (H_0 - H) + 2 \int_\Sigma (H_0 - H)_- > 0  $
\item[(ii)]  $ \displaystyle
\sup_\Sigma \lf| { ( H_0 - H)_ - }  \ri| < \delta, $
\end{enumerate}
then
$ \displaystyle  F_{\gamma, H} (\eta) \ge \tilde{ \beta} \int_\Sigma ( \Delta \eta)^2 d v_\Sigma  $,
 $ \forall \ \eta \in W^{2,2} (\Sigma)$, for some $ \tilde{\beta} > 0$.
\end{prop}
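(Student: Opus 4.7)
The plan is to deduce this from Lemma~\ref{lem: negative-part} by introducing $\tilde H := \min(H, H_0)$. Since $\tilde H \le H_0$ and $\tilde H \ge \alpha_1$, Lemma~\ref{lem: negative-part} applies to $F_{\gamma, \tilde H}$ with precisely the constant $\theta$ defined in the present statement. One then controls the correction $F_{\gamma, H} - F_{\gamma, \tilde H}$ via condition (ii) and observes that condition (i) matches the resulting estimate exactly.

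Write $\eta = \eta_1 + \eta_2$ with $\eta_1 = a_0 + \sum_{i=1}^3 a_i X^i \in \L(\gamma)$ and $\eta_2 \in \L(\gamma)^\perp$; set $A := \int_\Sigma (H_0-H)_+ dv_\gamma$ and $|B| := \int_\Sigma |(H_0-H)_-| dv_\gamma$. The case $a=0$ (where $\Delta\eta = \Delta\eta_2$) is direct: the $(H_0-H)_+$ part of $F_{\gamma, H}(\eta) - F_{\gamma, H_0}(\eta)$ is nonnegative, while the $(H_0-H)_-$ part is bounded by $\delta(\frac{1}{\alpha \inf_\Sigma H_0} + \frac{1}{\lambda})\int_\Sigma (\Delta\eta)^2 dv_\gamma = \frac{\beta}{4}\int_\Sigma (\Delta\eta)^2 dv_\gamma$ (by (ii), Lemma~\ref{lma-Laplace-gradient}, and the definition of $\delta$), yielding $F_{\gamma, H}(\eta) \ge \frac{3\beta}{4}\int_\Sigma (\Delta\eta)^2 dv_\gamma$. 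For $a \neq 0$, the homogeneity $F_{\gamma, H}(c\eta) = c^2 F_{\gamma, H}(\eta)$ allows me to assume $|a|=1$. Lemma~\ref{lem: negative-part} applied to $\tilde H$ then gives $F_{\gamma, \tilde H}(\eta) \ge \theta A + \frac{\beta}{2}\int_\Sigma (\Delta\eta_2)^2 dv_\gamma$, while a direct calculation shows
\[
F_{\gamma, H}(\eta) - F_{\gamma, \tilde H}(\eta) = -N, \quad N := \int_\Sigma |(H_0-H)_-| \lf[\frac{(\Delta\eta)^2}{HH_0} + |\nabla\eta|^2\ri] dv_\gamma,
\]
with integrand supported on $\{H \ge H_0\}$.

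The key estimate I would establish is that, for any $\mu > 0$,
\[
N \le (1+\mu^{-1})|B| + (1+\mu)\frac{\beta}{4}\int_\Sigma (\Delta\eta_2)^2 dv_\gamma,
\]
obtained from the splittings $(\Delta\eta)^2 \le (1+\mu^{-1})(\Delta\eta_1)^2 + (1+\mu)(\Delta\eta_2)^2$ and its analogue for $|\nabla\eta|^2$, the $\eta_2$-bound from the $a=0$ case, and the sharp pointwise inequality $\frac{(\Delta\eta_1)^2}{HH_0} + |\nabla\eta_1|^2 = \frac{H_0}{H}\la a, \nu_0\ra^2 + 1 - \la a, \nu_0\ra^2 \le 1$ on $\{H \ge H_0\}$ (where $H_0/H \le 1$), which controls the $\eta_1$-contribution by $|B|$. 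Setting $\mu = 1/(1+\theta)$ makes $1+\mu^{-1} = 2+\theta$, and combining gives
\[
F_{\gamma, H}(\eta) \ge \lf[\theta \int_\Sigma (H_0-H) dv_\gamma + 2\int_\Sigma (H_0-H)_- dv_\gamma\ri] + \frac{\beta \theta}{4(1+\theta)}\int_\Sigma (\Delta\eta_2)^2 dv_\gamma,
\]
the bracket being strictly positive by condition (i). Finally, since $\int_\Sigma (\Delta\eta_1)^2 dv_\gamma \le (\sup_\Sigma H_0)^2 |\Sigma|_\gamma$ is a uniform constant for $|a|=1$, and $\int_\Sigma (\Delta\eta)^2 \le 2\int_\Sigma (\Delta\eta_1)^2 + 2\int_\Sigma (\Delta\eta_2)^2$, one picks $\tilde\beta > 0$ small enough (in terms of $\beta$, $\theta$, $\sup_\Sigma H_0$, $|\Sigma|_\gamma$, and the positive excess in (i)) to conclude $F_{\gamma, H}(\eta) \ge \tilde\beta \int_\Sigma (\Delta\eta)^2 dv_\gamma$ in both cases. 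I expect the main technical point to be verifying the sharp pointwise bound $\frac{(\Delta\eta_1)^2}{HH_0} + |\nabla\eta_1|^2 \le 1$ on $\{H \ge H_0\}$: this is what makes the coefficient of $|B|$ in the estimate for $N$ match exactly the ``$2$'' in condition (i), which is essential for the choice $\mu = 1/(1+\theta)$ to close the argument.
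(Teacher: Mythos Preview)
Your proof is correct and follows essentially the same route as the paper: both introduce $\tilde H = H_1 := \min\{H,H_0\}$, apply Lemma~\ref{lem: negative-part} to $F_{\gamma,H_1}$, write the correction $F_{\gamma,H}-F_{\gamma,H_1}$ as an integral involving $(H_0-H)_-$ with denominator $H_2H_0$ (equivalently your $HH_0$ on the support), and use the same pointwise bound $\frac{(\Delta\eta_1)^2}{H_2H_0}+|\nabla\eta_1|^2\le 1$ for the $\eta_1$ part together with the $\delta$-smallness for the $\eta_2$ part. The only difference is cosmetic: the paper splits with the crude $(a+b)^2\le 2a^2+2b^2$ and then uses $\theta\int_\Sigma(H_0-H_1)\ge\theta\int_\Sigma(H_0-H)$ to match condition~(i), relying on the \emph{strict} inequality in (ii) to keep the coefficient of $\int_\Sigma(\Delta\eta_2)^2$ positive; you instead use the weighted Young inequality with $\mu=1/(1+\theta)$, which lands exactly on condition~(i) and yields the explicit positive coefficient $\frac{\beta\theta}{4(1+\theta)}$ independent of $H$.
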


\begin{proof}
Suppose $ H$ satisfies (i) and (ii). Given $ \eta \in W^{2,2}(\Sigma)$,
let $\eta=\eta_1+\eta_2$  and $\eta_1=a_0+\sum_{i=1}^3 a_iX^i$ be given as in the proof of Proposition
\ref{prop-perturbation-big-BY}.
Let $a=(a_1,a_2,a_3)$. If $ a =0$, similar to \eqref{eq-a=0},  we have
\be\label{eq-neg-3}
\begin{split}
F_{\gamma,H}(\eta) \ge&  \ \beta\int_\Sigma(\Delta\eta_2)^2+\int_\Sigma(H_0-H)\lf(\frac{(\Delta\eta_2)^2}{H_0H}+|\nabla\eta|^2\ri)dv_\gamma\\
\ge & \ \lf[\beta-\delta\lf(\frac1{\a \inf_\Sigma H_0}+\frac1\lambda\ri)\ri]\int_\Sigma(\Delta\eta_2)^2.
\end{split}
\ee
Next, suppose  $a\neq0$. We normalize $a$ so that $|a|=1$. Define
\bee
H_1= \min \{ H_0, H \} \ \mathrm{and} \
H_2 = \max \{H_0, H\} .
\eee
Then  $H_1+H_2=H+H_0$, $1/H_1+1/H_2=1/H+1/H_0$ and $H_0-H_2=(H_0-H)_-$. By Lemma \ref{lem: negative-part},
we have
\be \label{prop-3-a-notzero}
\begin{split}
F_{\gamma,H}(\eta)
=& \ F_{\gamma,H_1}(\eta)
+\int_\Sigma\lf(H_0-H_2\ri)\lf(\frac{(\Delta\eta)^2}{H_2H_0}
+|\nabla\eta|^2\ri)dv_\gamma\\
\ge& \ \theta \int_\Sigma(H_0-H_1)dv_\gamma+\frac\beta2\int_\Sigma(\Delta\eta_2)^2dv_\gamma\\
& \ +2\int_\Sigma\lf(H_0-H\ri)_-\lf(\frac{(\Delta\eta_1)^2}{H_2H_0}
+|\nabla\eta_1|^2\ri)dv_\gamma \\
& \ +2\int_\Sigma\lf(H_0-H\ri)_-\lf(\frac{(\Delta\eta_2)^2}{H_2H_0}
+|\nabla\eta_2|^2\ri)dv_\gamma\\
\ge & \ \theta \int_\Sigma(H_0-H )dv_\gamma   +2\int_\Sigma\lf(H_0-H\ri)_- dv_\gamma  \\
& \ + \lf[\frac\beta2- 2 \delta\lf( \frac1{ \a \inf_\Sigma H_0} +\frac1\lambda\ri)\ri]\int_\Sigma(\Delta\eta_2)^2dv_\gamma .
\end{split}
\ee
Proposition \ref{prop: negative-part} now follows from \eqref{eq-neg-3} and  \eqref{prop-3-a-notzero}.
\end{proof}

\section{Nearly round surfaces in AF manifolds}  \label{sect: application}

In this section, we apply Lemma \ref{lem-supplement-1} and Proposition \ref{prop-perturbation-big-BY}  to study
the positivity of $ F_{\gamma, H} $ on certain ``large surfaces" near infinity in an asymptotically flat $3$-manifold,
 based on existing results in \cite{FanShiTam07, ShiWangWu09}.

We adopt the following definition in \cite{FanShiTam07} for an asymptotically flat $3$-manifold and an admissible
coordinate chart.
\begin{df}
A Riemannian $3$-manifold is called asymptotically flat (AF) of order $\tau$ (with one end) if there is a compact
set $K$ such that $ M \setminus K$ is diffeomorphic to $ \R^3 \setminus B_R(0)$, where $ B_R(0)$ is a coordinate
ball of radius $R>0$ centered at the origin, such that in the standard coordinates $\{ z_i \}$ on $ \R^3$ the metric $g$ satisfies
\be \label{eq-AF-metric-1}
g_{ij} = \delta_{ij} + h_{ij}
\ee
with
\be \label{eq-AF-metric-2}
| h_{ij} | + |z| | \p h_{ij} | + |z|^2 | \p^2 h_{ij} | + | z|^3 | \p^3 h_{ij} |  = O ( | z |^{-\tau} )
\ee
for some constant $ \tau > \frac12 $. Here $|z|$ and $\p $ denote the coordinate length of $z$ and the usual partial
derivative operator on $ \R^3$ respectively.

A coordinate chart  $\{ z_i \}$ on $M$  in which the metric $g$ satisfies the above
conditions  \eqref{eq-AF-metric-1}-\eqref{eq-AF-metric-2} is called  an  admissible coordinate chart.

\end{df}

Large coordinate spheres in an admissible coordinate chart are examples of {\em nearly round surfaces}
(see \cite{ShiWangWu09}).  These surfaces are  intrinsically defined as follows.

\begin{df}[\cite{ShiWangWu09}]
On an asymptotically flat $3$-manifold $(M, g)$ of order $ \tau > \frac12$,
let $ r (x) $ be the $g$-distance from $x$ to a fixed point.
A $1$-parameter family of surfaces
$\{\Sigma_r \}$, where $ r = \min_{\Sigma_r} r(x) $ and
 $\Sigma_r $  is topologically a $2$-sphere, is called nearly round
as $ r  $ tends to infinity    if
\begin{enumerate}
\item $| \overset\circ{A} |+r |\nabla \overset\circ A| \leq C r^{-1-\tau}$

\item $ \max_{x \in \Sigma_r} r(x) \le C r $

\item $ \mathrm{diam}(\Sigma_r) \le C r $

\item $ \mathrm{Area} (\Sigma_r) \le C r^2 $ .

\end{enumerate}
Here $\overset\circ A$ is the traceless part of  the second fundamental form of $\Sigma_r$,
 $\nabla$ and $|\cdot|$  denote  the covariant
derivative and  the norm on  $ \Sigma_r$  with respect to the induced metric,
$ \mathrm{diam}(\cdot) $ and $ \mathrm{Area}(\cdot) $ denote the diameter and the area of a surface, and
   $C$  is a constant independent of $r$.
\end{df}

As shown in \cite{ShiWangWu09},  other  examples of nearly round surfaces include  the  constant mean curvature
surfaces  constructed in \cite{HuiskenYau96} and \cite{Ye96}.  The main result in this section is

\begin{thm} \label{thm-nearly-round-surface}
Let $(M, g)$ be an asymptotically flat $3$-manifold of
order $\tau > \frac12 $.
Let $ \{ \Sigma_r \}$ be a family of nearly round surfaces as $r$ tends to infinity.
Suppose the ADM mass  of $(M, g)$  is positive.
 Then there exist constants $ R > 0 $ and $C>0$ such that
\bee \label{eq-nearly-round-surface-1}
 \int_{\S_r} \lf[ \frac{ ( \Delta \eta )^2 }{ H  } + ( H_0 - H ) | \nabla \eta  |^2 - \Pi_0 ( \nabla \eta ,\nabla \eta)\ri]dv_{r}
 > C r  \int_{S_r} (\Delta \eta)^2 d v_r ,
\eee
$\forall $ $\eta \in W^{2,2} (\S_r) $ and $ \forall \ r > R$.

As a result, the Brown-York mass of $\S_r$ is a strict local minimum of the Wang-Yau quasi-local energy of $\S_r$ for sufficiently
large $r$.
\end{thm}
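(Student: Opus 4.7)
The plan is to reduce the theorem, via a geometric rescaling, to a uniform application of Lemma~\ref{lem-supplement-1} and Proposition~\ref{prop-perturbation-big-BY} on surfaces that are close to the standard unit sphere. Set $\tilde{\gamma}_r = r^{-2} \gamma_r$, where $\gamma_r$ is the induced metric on $\Sigma_r$; correspondingly, $\tilde{H}_r = r H_r$ and $\tilde{H}_0^r = r H_0^r$. Under the nearly round hypothesis, combined with the AF order $\tau > \tfrac12$ and standard Nirenberg-type control on the isometric embedding (as in Lemma~\ref{lem-supplement-1}), one can show that $\tilde{\gamma}_r \to \sigma_0$ (the standard round metric on $\mathbb{S}^2$) in $C^{2,\alpha}$ after pullback by a suitable diffeomorphism. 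In particular, the first nonzero eigenvalue of $(\Sigma_r, \tilde{\gamma}_r)$ is bounded below by $\lambda > 0$ uniformly, and $\tilde{H}_0^r$ converges to the constant $2$ uniformly.

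Next, I would apply Lemma~\ref{lem-supplement-1} with $\sigma = \sigma_0$ to obtain a uniform $\beta > 0$ such that for all large $r$,
\[
F_{\tilde{\gamma}_r, \tilde{H}_0^r}(\eta) \ge \beta \int_{\Sigma_r} (\tilde{\Delta}_r \eta)^2 \, d\tilde{v}_r, \quad \forall \ \eta \in \mathcal{L}(\tilde{\gamma}_r)^\perp.
\]
This sets up the hypothesis (\ref{eq-F-beta}) of Proposition~\ref{prop-perturbation-big-BY}. It remains to verify conditions (a) and (b) of that proposition for the pair $(\tilde{\gamma}_r, \tilde{H}_r)$. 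For (a), one uses the result of Shi-Wang-Wu~\cite{ShiWangWu09} that the Brown-York mass $\int_{\Sigma_r}(H_0 - H)\,dv_{\gamma_r}$ converges to $8\pi m > 0$, where $m$ is the positive ADM mass; this same quantity is scale invariant, so $\int_{\Sigma_r}(\tilde{H}_0^r - \tilde{H}_r)\,d\tilde{v}_r \to 8\pi m > 0$. For (b), the nearly round condition together with the $\tau$-decay bound on $h_{ij}$ give the asymptotics $H_r, H_0^r = 2r^{-1} + O(r^{-1-\tau})$ in a uniform pointwise sense, so $\sup_{\Sigma_r} |\tilde{H}_0^r - \tilde{H}_r| = O(r^{-\tau}) \to 0$, which controls both $\sup |(\tilde{H}_0^r - \tilde{H}_r)_-|$ and the ratio $\int|\tilde{H}_0^r - \tilde{H}_r|^2 / \int(\tilde{H}_0^r - \tilde{H}_r) = O(r^{-\tau})$.

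With (a) and (b) in hand, Proposition~\ref{prop-perturbation-big-BY} yields a uniform $\tilde{\beta} > 0$ such that
\[
F_{\tilde{\gamma}_r, \tilde{H}_r}(\eta) \ge \tilde{\beta} \int_{\Sigma_r} (\tilde{\Delta}_r \eta)^2 \, d\tilde{v}_r
\]
for all $\eta \in W^{2,2}(\Sigma_r)$ and all sufficiently large $r$. Unpacking the scaling, $\tilde{\Delta}_r = r^2 \Delta_r$, $d\tilde{v}_r = r^{-2} dv_r$, and the functional $F$ transforms as $F_{\tilde{\gamma}_r, \tilde{H}_r}(\eta) = r \, F_{\gamma_r, H_r}(\eta)$ while $\int (\tilde{\Delta}_r \eta)^2 d\tilde{v}_r = r^2 \int(\Delta_r \eta)^2 dv_r$, so rescaling back yields $F_{\gamma_r, H_r}(\eta) \ge C r \int (\Delta_r \eta)^2 dv_r$ with $C = \tilde{\beta}$, which is the claimed estimate. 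The conclusion about the Brown-York mass being a strict local minimum of $\Ewy(\Sigma_r, \cdot)$ is then immediate from Theorem~\ref{thm-local-minimum}.

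The main obstacle will be justifying the uniform $C^0$ (indeed, $C^{2,\alpha}$ after pullback) closeness of the rescaled data to $(\sigma_0, 2)$ with control sharp enough that the ratio condition in (b) of Proposition~\ref{prop-perturbation-big-BY} goes to zero. This requires combining the intrinsic nearly round bounds on $\overset{\circ}{A}$ with the extrinsic AF decay to extract pointwise asymptotics for both $H_r$ and $H_0^r$, and invoking the Nirenberg stability of the isometric embedding to compare $\Pi_0$ with the round sphere's second fundamental form; these ingredients are available in \cite{ShiWangWu09, FanShiTam07}, but assembling them into a clean uniform statement is the main technical step.
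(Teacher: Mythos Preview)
Your overall strategy---rescale to a near-round sphere, apply Lemma~\ref{lem-supplement-1} and Proposition~\ref{prop-perturbation-big-BY}, then rescale back---is exactly the paper's approach, and your scaling computation relating $F_{\tilde\gamma_r,\tilde H_r}$ to $F_{\gamma_r,H_r}$ is correct. However, two of your intermediate claims are wrong as stated.

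First, the Brown--York integral is \emph{not} scale invariant: under $\tilde\gamma_r=r^{-2}\gamma_r$ one has $\tilde H_0^r-\tilde H_r=r(H_0^r-H_r)$ and $d\tilde v_r=r^{-2}dv_{\gamma_r}$, so
\[
\int_{\Sigma_r}(\tilde H_0^r-\tilde H_r)\,d\tilde v_r=r^{-1}\int_{\Sigma_r}(H_0^r-H_r)\,dv_{\gamma_r}\sim 8\pi\,\m(g)\,r^{-1},
\]
not $8\pi\m(g)$; compare the paper's \eqref{eq-big-by-mass}. This does not break the argument---condition~(a) still holds by positivity, and the ratio in~(b) becomes $O(r^{-2\tau})/O(r^{-1})=O(r^{1-2\tau})\to 0$---but your stated reasoning is incorrect and hides exactly where the hypothesis $\tau>\tfrac12$ is used. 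Second, rescaling by $r$ does not in general give $\tilde\gamma_r\to\sigma_0$: the results of \cite{ShiWangWu09} only yield $K_r=r_0^{-2}+O(r^{-2-\tau})$ for a number $r_0=r_0(r)$ with $C^{-1}r\le r_0\le Cr$, so $r^{-2}\gamma_r$ has Gaussian curvature $(r/r_0)^2+O(r^{-\tau})$, and $r/r_0$ need not converge. The paper resolves this by rescaling with $r_0$ rather than $r$ (see \eqref{eq-r0-and-r}--\eqref{eq-metric-close-nearly-round}), which is precisely the ``clean uniform statement'' you flag as the main obstacle.
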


\begin{proof}
Let $ K $ be a compact set such that $ M \setminus K$ carries an admissible coordinate chart.
Let $ \hat{g} $ be a background  Euclidean metric on $ M \setminus K$.
Let $H_r$ and $ \hat{H}_r$ be the mean curvature of $\Sigma_r $ in
$(M \setminus K, g)$ and $ (M \setminus K, \hat{g})$ respectively.
By (2.13) in \cite{ShiWangWu09}, one has
\be \label{eq-nearly-round-h}
H_r = \hat{H}_r + O (r^{-1 -\tau}).
\ee
By Proposition 3.2 and Theorem 4 in  \cite{ShiWangWu09}, for each sufficiently large $r$,
there exists a number $r_0 \in \R$  such that
\be \label{eq-r0-and-r}
 C^{-1} r \le r_0 \le C r ,
 \ee
 \be
 \hat{H}_r= \frac{2}{r_0} + O(r^{-1 - \tau} ),
 \ee
 \be \label{eq-nearly-round-h-more}
| H_{0_{r}}  - \frac{2}{r_0} |  \le C r_0^{-1 - \tau},
\ee
and
 \be \label{eq-K-1}
K_r = \frac{1}{r_0^2} + O (r^{-2 -\tau}), \
 | \nabla K | = O ( r^{-3 - \tau} ).
 \ee
Here $C$ is a constant independent on $r$,
$K_r$ is the Gaussian curvature of $  \gamma_r $, where  $\gamma_r$ is the induced metric on $ \Sigma_r $  from $g$,
and  $H_{0_ r}$ is the mean curvature of $ (\Sigma_r, \gamma_r)  $ when it is isometrically embedded in $\R^3$.

Let $ \sigma_r  = r_0^{-2} \gamma_r  $. It follows from \eqref{eq-r0-and-r},  \eqref{eq-K-1} and the proof of
Theorem 3 in \cite{ShiWangWu09} (in particular (3.1) in \cite{ShiWangWu09}) that,
for each large $r$,  there is a conformal map $\Phi_r$ from $(\mathbb{S}^2, \sigma_0)$
to $ (\Sigma_r, \sigma_r)  $ such that
\be \label{eq-metric-close-nearly-round}
||\Phi_r^*(\sigma_r) - \sigma_0||_{C^{2,\a} (\mathbb{S}^2) }=O(r^{-\tau}) .
\ee
Here $\sigma_0$ is the standard metric on $\mathbb{S}^2$.

Let $H(r)$ be the mean curvature of  $ \Sigma_r $ in $ (M, r^{-2}_0 g )$
 and  $H_0 (r)$ be the mean curvature of $(\Sigma_r, \sigma_r)$
 when it is isometrically embedded in $ \R^3$.
It follows from \eqref{eq-nearly-round-h} --  \eqref{eq-nearly-round-h-more}  that
 \be \label{eq-H-condition-nearly-round}
 H_0 (r) - H(r) = O (r^{-\tau} ).
 \ee
On the other hand,
by Theorem 5 in \cite{ShiWangWu09},
 the Brown-York mass of $\Sigma_r $ in $(M, g)$ satisfies
\be \label{eq-ADM-nearly-round-surface}
\lim_{r \rightarrow \infty} \int_{\Sigma_r } ( H_{0_r} - H_r  ) d v_{\gamma_r}
= 8 \pi  \m  (g)
\ee
where $\m (g) > 0 $ is the ADM mass of $(M,g)$.
This together with \eqref{eq-r0-and-r}  implies  $ \exists \   R_0 > 0 $ such that
\be \label{eq-big-by-mass}
\int_{\Sigma_r}  (H_0(r) - H(r) )   d v_{\sigma_r}  > C^{-1}  r^{-1}  \pi \m (g), \
\forall \  r > R_0 .
\ee

Now choose $ \gamma = \sigma_r $ and $ H = H (r) $  in Proposition \ref{prop-perturbation-big-BY}.
By  \eqref{eq-metric-close-nearly-round} and  Lemma \ref{lem-supplement-1}, the  constant
$\beta$ in \eqref{eq-F-beta}  and the lower bound  $\lambda$ for the first nonzero eigenvalue
 can both be chosen to be independent on $r$.
Moreover, the conditions (a) and (b) are satisfied for large $r$ by
\eqref{eq-H-condition-nearly-round}, \eqref{eq-big-by-mass} and the fact $ \tau > \frac12$ and $ \m (g) > 0 $.
Therefore,  by Proposition \ref{prop-perturbation-big-BY},  we conclude that $\exists \  R>0$ and $\tilde{\beta} > 0$ such that
\be \label{eq-positivity-sigma-N}
F_{\sigma_r, H (r)} ( \eta ) \ge \tilde{\beta} \int_{\Sigma_r} (\Delta \eta )^2 d v_{\sigma_r}, \ \forall \ \eta \in  W^{2,2} (\Sigma_r), \
\forall \ r > R .
\ee
Theorem \ref{thm-nearly-round-surface} now follows from  \eqref{eq-r0-and-r},  \eqref{eq-positivity-sigma-N},
 and  Theorem \ref{thm-local-minimum}.
\end{proof}

 \section{Small geodesic spheres} \label{sect: small-sphere-2}

Let $(M, g)$ be an arbitrary Riemannian $3$-manifold of nonnegative scalar curvature.
Let $p \in M$ be any given point.   For small $r>0$,
let  $S_r$ be the geodesic sphere of radius $r$ centered at $p$.
Let  $ \gamma_r $ be the induced metric  on $S_r $ and $H(r)$ be the
mean curvature of $ S_r$ in $(M, g)$.
Let $H_{0}(r)$ be the mean curvature
of $(S_r, \gamma_r)$ when it is isometrically embedded in $\R^3$.
By \cite[Theorem 3.1]{FanShiTam07}, the Brown-York mass of $S_r$ in $(M, g)$ satisfies
\be   \label{eq-BY-small-sphere}
\begin{split}
& \ \frac{1}{8\pi} \int_{S_r} (H_{0}(r)  -  H(r) ) d v_{\gamma_r} \\
= & \  \frac{r^3}{12} R(p) + \frac{r^5}{1440} \lf[ 24 | \Ric (p) |^2 - 13 R(p)^2 + 12 \Delta R (p) \ri]
 + O(r^6)
\end{split}
\ee
where $R$ and $ \Ric $ denote the scalar curvature and the Ricci curvature of $(M, g)$ respectively,
and $\Delta$ is the Laplacian on $(M, g)$.
It follows   from \eqref{eq-BY-small-sphere} that  the  condition
\be \label{eq-small-main-1}
\lim_{r\to 0}r^{-5}\int_{S_r}(H_0(r)-H(r))dv_{\gamma_r} >0
\ee
is equivalent to the union of the following three conditions
\begin{enumerate}

\item[(i)]  $R(p)>0$

\item[(ii)] $R(p)=0$ and $|\Ric (p)|^2 >0$

\item[(iii)]  $R(p)=|\Ric(p)| =0$ and $\Delta R(p)>0$.

\end{enumerate}
Note that if $R(p)=0$, then $\Delta R(p)\ge0$ by the assumption $ R \ge 0$.

\begin{thm} \label{thm-small-sphere-main}
Under the above notations,  if the condition \eqref{eq-small-main-1} holds, then
\eqref{eq-intro-F} is true on $ S_r $ for small $r$.
Precisely, we have
\begin{enumerate}

\item[(a)]   If $\mathrm{(i)}$ or $\mathrm{(iii)}$ holds,  then $ \exists$  constants $ r_0 > 0 $ and $C>0$ such that,
$ \forall $ $\eta \in W^{2,2} (S_r) $ and $ \forall \ r < r_0 $,
$$
 \int_{S_r} \lf[ \frac{ ( \Delta \eta )^2 }{ H  } + ( H_0 - H ) | \nabla \eta  |^2 - \Pi_0 ( \nabla \eta ,\nabla \eta)\ri]dv_{r}
 > C r  \int_{S_r} (\Delta \eta)^2 d v_r .
$$

\item[(b)] If  $\mathrm{(ii)}$ holds,  then $ \exists $  constants $ r_0 > 0 $ and $C>0$ such that,
$ \forall $ $\eta \in W^{2,2} (S_r) $ and $ \forall \ r < r_0 $,
$$
 \int_{S_r} \lf[ \frac{ ( \Delta \eta )^2 }{ H  } + ( H_0 - H ) | \nabla \eta  |^2 - \Pi_0 ( \nabla \eta ,\nabla \eta)\ri]dv_{r}
 > C r^5  \int_{S_r} (\Delta \eta)^2 d v_r .
$$
\end{enumerate}
\end{thm}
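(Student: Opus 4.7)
The plan is to rescale the geodesic spheres to unit size and compare to the standard sphere. Setting $\tilde\gamma_r = r^{-2}\gamma_r$, $\tilde H(r) = rH(r)$ and $\tilde H_0(r) = rH_0(r)$, the scaling of each term in $F_{\gamma,H}$ gives
\[
F_{\gamma_r,H(r)}(\eta) = r^{-1} F_{\tilde\gamma_r,\tilde H(r)}(\eta), \qquad \int_{S_r}(\Delta\eta)^2 dv_{\gamma_r} = r^{-2}\int_{S_r}(\tilde\Delta\eta)^2 dv_{\tilde\gamma_r}.
\]
Hence (a) is equivalent to $F_{\tilde\gamma_r, \tilde H(r)}(\eta) \ge \tilde C\int(\tilde\Delta\eta)^2 dv_{\tilde\gamma_r}$ and (b) to $F_{\tilde\gamma_r, \tilde H(r)}(\eta) \ge \tilde C r^4\int(\tilde\Delta\eta)^2 dv_{\tilde\gamma_r}$, for some $\tilde C>0$ independent of $r$. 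Identifying $S_r$ with $\mS^2$ via $\exp_p$, we have $\tilde\gamma_r\to\sigma_0$ in $C^{2,\a}$ and $\tilde H(r),\tilde H_0(r)\to 2$ uniformly, so Lemma \ref{lem-supplement-1} supplies a uniform $\beta>0$ with $F_{\tilde\gamma_r,\tilde H_0(r)}(\eta) \ge \beta\int(\tilde\Delta\eta)^2 dv_{\tilde\gamma_r}$ on $\L(\tilde\gamma_r)^\perp$, the first nonzero eigenvalue of $(\mS^2, \tilde\gamma_r)$ is uniformly bounded below, and $\tilde H(r)\ge 1$ for $r$ small.

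For part (a), cases (i) and (iii), the plan is to apply Proposition \ref{prop-perturbation-big-BY} to $(\tilde\gamma_r, \tilde H(r))$. The Brown-York expansion \eqref{eq-BY-small-sphere} gives
\[
\int(\tilde H_0-\tilde H)dv_{\tilde\gamma_r} = r^{-1}\int(H_0-H)dv_{\gamma_r},
\]
positive and of order $r^2 R(p)$ in case (i) and $r^4 \Delta R(p)$ in case (iii). Standard pointwise expansions on geodesic spheres, $H(r,\omega) - 2/r = -\frac{r}{3}\Ric (p)(\omega,\omega) + O(r^2)$ and its counterpart for $H_0$, yield $|\tilde H_0-\tilde H|_{C^0} = O(r^2)$ in (i) and $O(r^3)$ in (iii) (in (iii) the leading $\omega$-dependent term vanishes identically since $\Ric (p)=0$). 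Consequently $\sup|(\tilde H_0-\tilde H)_-|\to 0$ and the ratio $\int|\tilde H_0-\tilde H|^2 dv/\int(\tilde H_0-\tilde H) dv$ is $O(r^2)$ in both cases. Proposition \ref{prop-perturbation-big-BY} then produces an $r$-uniform constant $\tilde\beta$ and yields (a).

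Part (b), case (ii), is the harder case. Here $R(p)=0$ but $\Ric (p)\neq 0$, so $-\frac{r}{3}\Ric (p)(\omega,\omega)$ does not vanish pointwise, and $|\tilde H_0-\tilde H|_{C^0}$ is of exact order $r^2$. Both $\int|\tilde H_0-\tilde H|^2 dv$ and $\int(\tilde H_0-\tilde H)dv$ are then of order $r^4$, the ratio hypothesis of Proposition \ref{prop-perturbation-big-BY} fails, and the cross term $2Q_{\tilde\gamma_r,\tilde H(r)}(\eta_1,\eta_2)$ in the decomposition $\eta = \eta_1+\eta_2 \in \L(\tilde\gamma_r)\oplus\L(\tilde\gamma_r)^\perp$ can no longer be absorbed by Cauchy--Schwarz alone. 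To handle this I would Taylor-expand $\tilde H(r) = 2 + r^2\phi + O(r^3)$ and $\tilde H_0(r) = 2 + r^2\phi_0 + O(r^3)$ on $(\mS^2,\sigma_0)$, where $\phi_0-\phi$ is a nonzero multiple of $\omega\mapsto\Ric (p)(\omega,\omega)$ and hence a pure $\ell=2$ spherical harmonic since $\Ric (p)$ is traceless. Expanding $F_{\tilde\gamma_r, \tilde H(r)}$ in powers of $r$, and using Lemma \ref{lem-earlier}(iii) to kill the leading-order $\L\times\L^\perp$ cross term of $F_{\sigma_0,2}$, (b) then reduces to a sharp functional inequality on $(\mS^2, \sigma_0)$ of the form
\[
F_{\sigma_0,\, 2 + r^2(\phi-\phi_0)}(\eta) \ge c\, r^4\int(\Delta\eta)^2 dv_{\sigma_0},
\]
which is precisely the content of Proposition \ref{prop-FQ-2}.

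The main obstacle is establishing this refined $(\mS^2,\sigma_0)$-inequality. By spherical-harmonic decomposition of $\eta$, the $\ell\ge 2$ modes in $\L(\sigma_0)^\perp$ are handled by the uniform positivity from Lemma \ref{lem-supplement-1}; on $\L(\sigma_0) = \mathrm{span}\{1, X_1, X_2, X_3\}$, Lemma \ref{lem-earlier-2} combined with the $r^4$-order Brown-York integral (strictly positive since $|\Ric (p)|^2 > 0$ and $\Delta R(p)\ge 0$) gives $F(\eta_1)\ge c|a|^2 r^4$. The genuine difficulty is the coupling between the $\ell=1$ part of $\eta$ (living in $\L$) and the $\ell=2$ perturbation $\phi_0-\phi$: this produces a mixed $r^4$ term whose magnitude is comparable to both diagonal pieces, and its control requires explicit spherical-harmonic integrals coupling $\ell=1$ with $\ell=2$ and the second fundamental form of $\mS^2\subset\R^3$. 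Verifying that the resulting finite-dimensional quadratic form is strictly positive, with a constant depending only on the $\ell=2$ coefficient coming from $\Ric (p)$, is the heart of the proof and the step I expect to be the main obstacle.
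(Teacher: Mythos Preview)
Your treatment of part (a) is correct and matches the paper's proof exactly: rescale, invoke Lemma \ref{lem-supplement-1} for a uniform $\beta$, and verify the hypotheses of Proposition \ref{prop-perturbation-big-BY} using the expansions \eqref{eq-H-r-Ssphere}--\eqref{eq-BY-S-sphere}.

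For part (b) your overall strategy is the paper's, but two points in your outline need correction.

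First, the reduction from $F_{\tilde\gamma_r,\tilde H(r)}$ to a fixed $(\mS^2,\sigma_0)$ inequality is not a direct Taylor expansion. The difficulty is that $\eta$ is arbitrary, and at order $r^4$ the metric perturbation $\tilde\gamma_r-\sigma_0=O(r^2)$ interacts with $\eta_2\in\L(\tilde\gamma_r)^\perp$ in a way you cannot control unless you already know $\|\Delta\eta_2\|_{L^2}=O(r^2)$. The paper handles this by contradiction (Theorem \ref{thm-small-spheres-bz}(i)): assume the estimate fails along $r_k\to 0$, normalize so $|a^{(k)}|=1$, use \eqref{eq-F-etar2-2}--\eqref{eq-F-etar-1-1} to force $\int(\Delta_k\eta_2^{(k)})^2\to 0$, then set $\xi_k=\eta_2^{(k)}/r_k^2$ and extract a weak $W^{2,2}$ limit $\xi$. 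Only after this rescaling does the $r^4$-expansion become a clean limit, and the limit inequality is the functional $G(\eta_1,\xi)\le 0$ of Proposition \ref{prop-FQ-2}, not an inequality of the form $F_{\sigma_0,2+r^2\psi}(\eta)\ge cr^4\int(\Delta\eta)^2$.

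Second, your description of the spherical-harmonic mechanism inside Proposition \ref{prop-FQ-2} is slightly off. The cross term is $B=\int\phi[\tfrac14\Delta_0\eta_1\Delta_0\eta_2+\langle\nabla_0\eta_1,\nabla_0\eta_2\rangle]\dvo$, and one computes $B=10\int\phi\eta_1\eta_2\dvo$. Since $\phi\eta_1$ is the restriction of a degree-$3$ polynomial, it has only $\ell=1$ and $\ell=3$ components; hence the $\ell=2$ part $\tau_2$ of $\eta_2$ drops out of $B$ by parity, and the genuine coupling is between $\eta_1$ and the $\ell\ge 3$ part $\tau_3$. The positivity then comes from an explicit discriminant computation \eqref{eq-DELTA} using the eigenvalue gap at $\ell=3$ (giving the coefficient $\tfrac{5}{12}$ in \eqref{eq-eigen-est-C}), not from Lemma \ref{lem-supplement-1}. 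The threshold $\bar b=\tfrac{1}{90}$ is sharp, and case (ii) corresponds to $\bar b\le 0<\tfrac{1}{90}$, which is why the argument succeeds.
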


\begin{proof}[Proof of Theorem \ref{thm-small-sphere-main}(a)]
Let $\{ z_i \} $ be a geodesic normal coordinate chart centered at $p$. For small $r$, $S_r = \{ | z | = r \}$.
The metric $g$ satisfies
\be \label{eq-metric-normal-chart}
g_{ij}  = \delta_{ij} + h_{ij}
\ee
where $ h_{ij} $ is a smooth function near $0$ with $ h_{ij}(0) = 0 $ and $\p h_{ij} (0) = 0 $.
For each fixed $r >0$, define a new coordinate chart $\{ x_i \}$ near $p$  by $ x_i = r^{-1} z_i $.
Let $ \mS^2$ be the unit coordinate sphere in the $x$-space.
We identify $ S_r$ with $ \mathbb{S}^2 $  through the map $ z \mapsto x$.
Let  $ \sigma_r = r^{-2} \gamma_r  $ be the induced metric on $ \mathbb{S}^2$ from $(M, r^{-2} g)$ and
 $H (r) $ be the mean curvature of $\mS^2$   in $(M, r^{-2} g)$. Let
$H_0 (r) $ be the mean curvature of the isometric embedding  of $(\mS^2, \sigma_r)$ in $ \R^3$.

By  \eqref{eq-metric-normal-chart} and the results in \cite{FanShiTam07}(Lemma 3.4 and Theorem 3.1), we have
\be \label{eq-sigma-sigma0-S}
|| \sigma_r - \sigma_0 ||_{C^3 ({\mS^2}, \sigma_0) } = O (r^{2} ) ,
\ee
\be \label{eq-H-r-Ssphere}
H (r) = 2 - \frac{r^2}{3} R_{ij}(p)  {x^i x^j} +  O ( r^3 ),
\ee
\be \label{eq-H-0-Ssphere}
H_0 (r) = 2  +  r^2 \lf( \frac{1}{2} R(p) - \frac43 R_{ij} (p)x^i x^j  \ri)  + O ( r^{ 3} ),
\ee
\be \label{eq-BY-S-sphere}
\begin{split}
& \ \frac{1}{8 \pi} \int_{S_r} (H_0 (r)  - H (r) ) d v_{\sigma_r} \\
= & \  \frac{r^2}{12} R(p) + \frac{r^4}{1440} \lf[ 24 | \Ric (p) |^2 - 13 R(p)^2 + 12 \Delta R (p) \ri] + O(r^5) .
\end{split}
\ee
Here $ \sigma_0$ is the standard metric on $ \mS^2$ and
$R_{ij}(p) = \Ric( \frac{\p}{\p z_i}, \frac{\p }{\p z_j} ) (p) $.

If  $ R(p) > 0 $,  it follows from \eqref{eq-H-r-Ssphere} - \eqref{eq-BY-S-sphere}  that
\be \label{eq-H-differ}
 |H_0(r)-H(r)|=O(r^2)
 \ee
and
\be  \label{eq-BY-S-sphere-Rp}
\int_{\mS^2} (H_0 (r)  - H (r) ) d v_{\sigma_r}   >  \frac{r^2}{3} \pi R(p)
\ee
for small $ r$.
Take $\gamma = \sigma_r$ and $ H = H(r)$ in  Proposition \ref{prop-perturbation-big-BY}.
By \eqref{eq-sigma-sigma0-S}  and  Lemma \ref{lem-supplement-1}, the  constant
$\beta$ in \eqref{eq-F-beta}  and the lower bound  $\lambda$ for the first nonzero eigenvalue
 can both be chosen to be independent on $r$.
Moreover, the conditions (a) and (b) are satisfied for small $r$ by
\eqref{eq-H-differ} and \eqref{eq-BY-S-sphere-Rp}.
Therefore,   Proposition \ref{prop-perturbation-big-BY} implies there exist $ r_0 >0$ and $\tilde{\beta} > 0$ such that
if $ r < r_0$, then
\be \label{eq-positivity-sigma-S-Rp}
F_{\sigma_r, H (r) } ( \eta ) \ge \tilde{\beta} \int_{\mS^2} (\Delta \eta )^2 d v_{\sigma_r},  \ \forall \ \eta \in W^{2,2} (\mS^2).
\ee

If  $R(p)=|\Ric (p)|=0$ and $\Delta R(p)>0$, it follows from \eqref{eq-H-r-Ssphere} - \eqref{eq-BY-S-sphere}  that
\be \label{eq-H-differ-2}
 |H_0(r)-H(r)|=O(r^3)
 \ee
and
\be  \label{eq-BY-S-sphere-Rp-2}
\int_{\mathbb{S}^2}\lf(H_o(r)-H(r)\ri)\dvr\ge  \frac{\Delta R (p)}{240} r^4
\ee
for  small $r$.
Again,   Lemma \ref{lem-supplement-1} and  Proposition \ref{prop-perturbation-big-BY} can be applied to show that
\eqref{eq-positivity-sigma-S-Rp} holds for some $\tilde{\beta}$ and $  r_0$.

Theorem  \ref{thm-small-sphere-main}(a) now  follows  \eqref{eq-positivity-sigma-S-Rp}.
\end{proof}

The case $ R (p) = 0 $ and $ | \Ric(p ) | > 0 $ is more subtle
because in this case
$
 |H_0(r)-H(r)|=O(r^2)
$
while
$
\int_{\mathbb{S}^2 }\lf(H_o(r)-H(r)\ri)\dvr = O (r^4).
$
The sufficient conditions in Section \ref{sect: sufficient} do not apply in this situation.

To prepare for the proof of Theorem  \ref{thm-small-sphere-main}(b), we choose
$\{ z_i \} $ to be a geodesic normal coordinate chart centered at $p$ such that
 the Ricci curvature of $g$ is diagonalized by $\{ \frac{\p}{\p z_i} \}$  at $p$, i.e.
$$ \Ric \lf( \frac{\p}{\p z_i} , \frac{\p }{\p z_j} \ri)(p)  = \delta_{ij} \lambda_i  $$
where $ \{ \lambda_i \}$  are the eigenvalues of $\Ric(p)$.
Then  $\{ \lambda_i \}$ satisfy
\be
\lambda_1 + \lambda_2 + \lambda_3 = R(p) = 0
\ee
and
\be \label{eq-Ric-lambda}
 \sumi \lambda_i^2 =  | \Ric(p)|^2   >   0 .
\ee
Let $ \{x_i \}$, $\sigma_r$, $\sigma_0$, $H(r)$, $H_0 (r)$ be defined as in the proof of
Theorem  \ref{thm-small-sphere-main}(a). For convenience,
we record  \eqref{eq-sigma-sigma0-S} -- \eqref{eq-BY-S-sphere}
in the current setting:
\be \label{eq-sigma-sigma0-S-z}
|| \sigma_r - \sigma_0 ||_{C^3 ({\mS^2}) } = O (r^{2} )
\ee
\be \label{eq-H-r-Ssphere-z}
H (r) = 2 - \frac{r^2}{3} \sum_{i=1}^3 \lambda_i x_i^2   +  O ( r^3 )
\ee
\be \label{eq-H-0-Ssphere-z}
H_0 (r) = 2  -  \frac{4r^2}{3} \sum_{i=1}^3 \lambda_i x_i^2  + O ( r^{ 3} )
\ee
and
\be \label{eq-BY-Ssphere-z}
\int_{\mS^2} ( H_0 (r)  - H (r) ) d v_{\sigma_r} =  \   \frac{\pi r^4}{15} \lf[ 2 \sum_{i=1}^3 \lambda_i^2   +  \Delta R (p) \ri] + O(r^5).
\ee

Given any constant $b$,   define
\be \label{eq-def-Hb}
H_{(b)}  (r) = H(r) + b r^4   \sum_{i=1}^3 \lambda_i^2   .
\ee
It follows from  \eqref{eq-Ric-lambda}, \eqref{eq-sigma-sigma0-S-z}, \eqref{eq-BY-Ssphere-z} and \eqref{eq-def-Hb} that
\be \label{eq-BY-Ssphere-z-b}
\int_{\mS^2} (H_0 (r)  - H_{(b)} (r)  ) d v_{\sigma_r} =  \   4 \pi r^4  \lf[ \frac{1}{30} -  \lf( b -  \frac{1}{60} \frac{ \Delta  R (p) }{| \Ric (p) |^2  }  \ri)  \ri] \sum_{i=1}^3 \lambda_i^2  + O(r^5).
\ee

We are in a position to state the main result in the remaining part of this paper ---
a  classification theorem on the positivity of $ F_{\sigma_r, H_{(b)} (r)} $, from which
Theorem \ref{thm-small-sphere-main}(b) will follow as a corollary.

\begin{thm} \label{thm-small-spheres-bz}
Under the above notations,
\begin{enumerate}
\item[(i)] if the constant
$$ \displaystyle  \lf(  b - \frac{1}{60} \frac{ \Delta R  (p) }{| \Ric (p) |^2} \ri) < \frac{1}{90},$$ then
there exist constants $ r_0 >0$  and $ C> 0 $ such that
for any $ 0 < r < r_0 $,
$$ F_{\sigma_r, H_{(b)}(r)} (\eta) \ge C r^4 \int_{\mS^2} (\Delta_r \eta)^2 \dvr, \ \ \forall \  \eta \in W^{2.2}(\mS^2) .$$
Here $\Delta_r$ denotes the Laplacian of the metric $\sigma_r$.

\item[(ii)]  if  the constant
$$ \displaystyle \lf(  b - \frac{1}{60} \frac{ \Delta R (p) }{| \Ric (p) |^2}  \ri) > \frac{1}{90} ,$$
 then there exists a constant $r_1 > 0 $ such that for any $ 0 < r < r_1$, there exists a function $\eta_r \in W^{2.2}(\mS^2)$ such that
$$ F_{\sigma_r, H_{(b)} (r)} (\eta_r ) < 0 . $$

\end{enumerate}
\end{thm}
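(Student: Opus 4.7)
The plan is to work directly with the order-$r^4$ expansion of the functional, since the sufficient conditions of Section \ref{sect: sufficient} cannot capture the sharp threshold in this regime. Indeed, here one has $\sup_{\mS^2}|H_0(r) - H_{(b)}(r)| = O(r^2)$ while $\int_{\mS^2}(H_0(r) - H_{(b)}(r))\,\dvr = O(r^4)$, so the ratio in hypothesis (b) of Proposition \ref{prop-perturbation-big-BY} stays bounded away from $0$, and the negative-part condition in Proposition \ref{prop: negative-part} is similarly too weak relative to $\int(H_0 - H_{(b)})\,\dvr$. The strategy is instead to isolate the quadratic form on the standard sphere $(\mS^2, \sigma_0)$ that governs the $O(r^4)$ interaction between the degenerate directions $\L(\sigma_0)$ and their $L^2$-complement, and to apply a sharp inequality for that form --- precisely the role of the auxiliary Proposition \ref{prop-FQ-2} announced in the introduction.

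For part (i), I would split $\eta = \eta_1 + \eta_2$ with $\eta_1 = a_0 + \sum_{i=1}^3 a_i X^i \in \L(\sigma_r)$ (for an isometric embedding $X$ of $(\mS^2, \sigma_r)$) and $\eta_2 \in \L(\sigma_r)^\perp$. Lemma \ref{lem-earlier-2} combined with \eqref{eq-BY-Ssphere-z-b} gives
\[ F_{\sigma_r, H_{(b)}(r)}(\eta_1) = 4\pi r^4 \Big[\tfrac{1}{30} - \big(b - \tfrac{\Delta R(p)}{60|\Ric(p)|^2}\big)\Big]|a|^2 |\Ric(p)|^2 + (\text{nonneg.}) + O(r^5)|a|^2, \]
whose leading coefficient is strictly positive under hypothesis (i). For $\eta_2$, Lemma \ref{lem-supplement-1} via \eqref{eq-sigma-sigma0-S-z} yields $F_{\sigma_r, H_0(r)}(\eta_2) \geq \beta \int(\Delta_r \eta_2)^2\,\dvr$ with $\beta$ uniform in $r$, and replacing $H_0(r)$ by $H_{(b)}(r)$ costs at most $O(r^2)\int(\Delta_r \eta_2)^2\,\dvr$ once the $|\nabla\eta_2|^2$ term is converted to a Laplacian norm via Lemma \ref{lma-Laplace-gradient} using the first nonzero eigenvalue of $(\mS^2, \sigma_r)$ (which is close to $2$). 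The cross term $2Q_{\sigma_r, H_{(b)}(r)}(\eta_1, \eta_2)$ vanishes at leading order by Lemma \ref{lem-earlier}(iii); its $O(r^2)$ contribution, after pulling out a factor of $r^2$, becomes a quadratic form on $(\mS^2, \sigma_0)$ to which Proposition \ref{prop-FQ-2} applies, yielding the sharp absorption that separates the $|a|^2$-budget from the $\int(\Delta_r\eta_2)^2$-budget with deficit exactly $\tfrac{1}{90}$ relative to $\tfrac{1}{30}$. Combining the three pieces gives $F_{\sigma_r, H_{(b)}(r)}(\eta) \geq Cr^4\big[|a|^2 + \int(\Delta_r \eta_2)^2\,\dvr\big]$, and since $\int(\Delta_r \eta_1)^2\,\dvr = O(|a|^2)$ this produces the claimed $Cr^4\int(\Delta_r\eta)^2\,\dvr$ bound.

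For part (ii), I would construct an explicit test function $\eta_r = \sum_{i=1}^3 a_i x_i + r^2 \psi$, with $\psi$ a degree-$2$ spherical harmonic and $(a, \psi)$ chosen as a near-minimizer of the extremal problem dual to Proposition \ref{prop-FQ-2}. Expanding $F_{\sigma_r, H_{(b)}(r)}(\eta_r)$ via \eqref{eq-sigma-sigma0-S-z}--\eqref{eq-BY-Ssphere-z-b}, the order-$r^4$ coefficient assembles into a quadratic form in $(a, \psi)$ built from (a) Lemma \ref{lem-earlier-2} applied to $\sum a_i x_i \in \L(\sigma_0)$, producing the factor $\tfrac{1}{30} - (b - \tfrac{\Delta R(p)}{60|\Ric(p)|^2})$, (b) the $O(r^2)$-piece of $Q_{\sigma_r, H_{(b)}(r)}(\sum a_i x_i, \psi)$ computed from the $r^2$-corrections of $\sigma_r$, $H_0(r)$, $H(r)$ and $\Pi_0$, and (c) $F_{\sigma_0, 2}(\psi)$, which acts diagonally in spherical harmonics. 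Minimizing over degree-$2$ $\psi$ via the variational Euler equation reduces to inverting a finite-dimensional symmetric matrix, whose minimum value turns negative precisely when $b - \tfrac{\Delta R(p)}{60|\Ric(p)|^2} > \tfrac{1}{90}$; choosing $(a, \psi)$ close to the minimizer then delivers $F_{\sigma_r, H_{(b)}(r)}(\eta_r) < 0$ for all small $r$. The main obstacle --- and the hardest technical ingredient --- is Proposition \ref{prop-FQ-2} itself: identifying the degree-$2$ modes that couple to $\L(\sigma_0)$ through the $r^2$-corrections of all geometric quantities, and computing the resulting moment integrals (using the spherical-harmonic identities collected in the appendix) to show that $\tfrac{1}{90}$ is the exact critical eigenvalue of the associated form.
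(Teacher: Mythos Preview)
Your outline for part (i) is close in spirit to the paper, though the paper argues by contradiction and compactness rather than directly: assuming the bound fails along a sequence $r_k\to 0$, it shows after normalization that $\eta_2^{(k)}/r_k^2$ is bounded in $W^{2,2}$, extracts a weak limit $\xi\in\L(\sigma_0)^\perp$, and obtains $G(\eta_1,\xi)\le 0$, contradicting Proposition~\ref{prop-FQ-2}(i). A direct approach along your lines can be made to work, but you gloss over two points: (a) $\eta_2$ lies in $\L(\sigma_r)^\perp$, not $\L(\sigma_0)^\perp$, so applying Proposition~\ref{prop-FQ-2} requires controlling that discrepancy; (b) the ``sharp absorption'' you invoke only applies at the scale $\eta_2\sim r^2$, whereas for $\eta_2$ of larger scale the crude bound from Lemma~\ref{lem-supplement-1} must take over --- the contradiction argument handles both regimes at once without splitting into cases.

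Your plan for part (ii) contains a genuine error. You propose the test function $\eta_r=\sum a_i x_i+r^2\psi$ with $\psi$ a \emph{degree-$2$} spherical harmonic, and claim the resulting finite-dimensional form becomes negative exactly at the threshold $\bar b=\tfrac{1}{90}$. But the cross term, computed in the proof of Proposition~\ref{prop-FQ-2}, equals $10\int_{\mS^2}\phi\,\eta_1\,\psi\,\dvo$ with $\phi=\sum\lambda_i x_i^2$; when $\psi$ has degree $2$, the integrand is a homogeneous polynomial of odd degree $5$, so the integral vanishes. With zero cross term the $r^4$-coefficient reduces to $4\pi(\tfrac1{30}-\bar b)\sum\lambda_i^2+\tfrac12 A+12\!\int\psi^2$, which is minimized at $\psi=0$ and stays positive throughout $(\tfrac1{90},\tfrac1{30})$. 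The modes that actually couple to $\L(\sigma_0)$ are the \emph{degree-$3$} spherical harmonics (eigenvalue $12$): the paper shows $\phi\eta_1-\xi$, where $\xi$ is the $E_1$-projection of $\phi\eta_1$, satisfies $\Delta_0(\phi\eta_1-\xi)=-12(\phi\eta_1-\xi)$, and it is precisely this degree-$3$ choice, scaled by the right constant, that makes $G(\eta_1,\eta_2)=4\pi(\tfrac1{90}-\bar b)\sum\lambda_i^2<0$. The same misidentification appears in your closing sentence about Proposition~\ref{prop-FQ-2}; the critical constant $\tfrac1{90}$ comes from the third eigenvalue, not the second.
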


\begin{proof}[Proof of Theorem \ref{thm-small-sphere-main}(b)]
Let $b = 0 $, the result follows from Theorem \ref{thm-small-spheres-bz} (i).
\end{proof}

The main ingredient in the proof of Theorem \ref{thm-small-spheres-bz}
is  the following result on $(\mS^2, \sigma_0)$.

\begin{prop}\label{prop-FQ-2}
Let $\sigma_0 $ be the standard metric on $\mS^2 = \{ | x | = 1 \} \subset  \R^3$.
Given  any three constants $\lambda_1, \lambda_2, \lambda_3$ satisfying
$$ \sum_{i=1}^3 \lambda_i = 0 \ {and} \   \sum_{i=1}^3 \lambda_i^2 > 0 , $$
define
$ \phi=\sum_{i=1}^3  \lambda_i x_i^2  . $
 Consider the functional
\bee
  \begin{split}
   G(\eta_1,\eta_2)=&4\pi   \lf(\frac1{30}- \bb\ri)\sumi\lambda_i^2 +\frac12  \int_{\mS^2} \eta_1^2 \phi^2 \dvo\\
&    -   2\int_{\mS^2}\phi \lf[  \frac{  (\Delta_0 \eta_1) ( \Delta_0\eta_2 )}{ 4} +\la\nabla_0\eta_1,\nabla_0\eta_2\ra\ri] \dvo\\
  &+\int_{\mS^2} \lf(\frac{\lf(\Delta_0\eta_2\ri)^2}2-|\nabla_0\eta_2|^2
  \ri)\dvo
  \end{split}
\eee
where  $ \bb$ is a constant, $\Delta_0$ and $ \nabla_0$ are the Laplacian and the gradient on $(\mS^2, \sigma_0)$,
and
  $\eta_1, \eta_2\in W^{2,2}(\mS^2)$ satisfy
\begin{itemize}
\item $ \eta_1= \sum_{i=1}^3 a_i x_i  $ for some  vector $a= (a_1, a_2, a_3) $ with $ |a| = 1$

\item  $\eta_2$ is $\sigma_0$-$L^2$ orthogonal to $\L (\sigma_0)$,
the space spanned by $\{ 1, x_1, x_2, x_3 \}$.
\end{itemize}
Then
\begin{enumerate}
\item[(i)] if $ \bb< \frac{1}{90}$,  $G(\eta_1, \eta_2) > 0 $ for any $ \eta_1$ and $\eta_2$.

\vh

\item[(ii)]  if $ \bb > \frac{1}{90} $,  given any $\eta_1 $, $\exists$  an $\eta_2 $ such that $ G(\eta_1, \eta_2) < 0 $.

\end{enumerate}

\end{prop}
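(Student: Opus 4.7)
The plan is to reduce $G(\eta_1,\cdot)$ to an explicit quadratic minimization in $\eta_2$ via the spherical harmonic decomposition on $(\mS^2,\sigma_0)$.  Let $E_k$ denote the $k$-th eigenspace of $-\Delta_0$, with eigenvalue $k(k+1)$.  Two structural facts drive the argument: first, $\eta_1\in E_1$, so $\Delta_0\eta_1=-2\eta_1$; second, since $\sumi\lambda_i=0$, the function $\phi=\sumi\lambda_i x_i^2$ is the restriction to $\mS^2$ of a harmonic homogeneous polynomial of degree $2$ on $\R^3$, so $\phi\in E_2$ and $\Delta_0\phi=-6\phi$.  Consequently $\phi\eta_1$ is the restriction to $\mS^2$ of a cubic polynomial on $\R^3$, and its spherical-harmonic decomposition takes the form $\phi\eta_1=u_1+u_3$ with $u_k\in E_k$.

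To simplify the cross term in $G$ I would write $(\Delta_0\eta_1)(\Delta_0\eta_2)/4=-\eta_1\Delta_0\eta_2/2$, integrate both pieces by parts, and apply the product rule $\Delta_0(\phi\eta_1)=\phi\Delta_0\eta_1+\eta_1\Delta_0\phi+2\la\nabla_0\phi,\nabla_0\eta_1\ra=-8\phi\eta_1+2\la\nabla_0\phi,\nabla_0\eta_1\ra$.  After regrouping, the cross term collapses to
\[
-2\int_{\mS^2}\phi\lf[\frac{(\Delta_0\eta_1)(\Delta_0\eta_2)}{4}+\la\nabla_0\eta_1,\nabla_0\eta_2\ra\ri]\dvo
=2\int_{\mS^2}\eta_2(\Delta_0+2)(\phi\eta_1)\dvo=-20\int_{\mS^2}\eta_2 u_3\dvo,
\]
using $(\Delta_0+2)u_1=0$ and $(\Delta_0+2)u_3=-10u_3$.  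Now decompose $\eta_2=\sum_{k\ge 2}\eta_{2,k}$ with $\eta_{2,k}\in E_k$; the quadratic-in-$\eta_2$ part of $G$ diagonalizes as $\sum_{k\ge 2}c_k\|\eta_{2,k}\|_{L^2}^2$ with $c_k=k(k+1)[k(k+1)-2]/2>0$, and in particular $c_3=60$.  Because $u_3\in E_3$, the linear term couples only to $\eta_{2,3}$, so the unique minimizer is $\eta_{2,k}=0$ for $k\ne 3$ together with $\eta_{2,3}=u_3/6$, with value $-\tfrac53\|u_3\|^2$.  Therefore
\[
\min_{\eta_2}G(\eta_1,\eta_2)=4\pi\lf(\tfrac{1}{30}-\bb\ri)\sumi\lambda_i^2+\tfrac12\|u_1\|^2-\tfrac76\|u_3\|^2.
\]

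What remains, and what I expect to be the main technical obstacle, is the closed-form identity $\tfrac12\|u_1\|^2-\tfrac76\|u_3\|^2=-\tfrac{4\pi}{45}\sumi\lambda_i^2$.  I would compute $u_1=\sum_j c_j x_j$ from $c_j=(3/4\pi)\int\phi\eta_1 x_j\dvo$ using $\int x_i^4\dvo=4\pi/5$ and $\int x_i^2x_j^2\dvo=4\pi/15$ ($i\ne j$); the relation $\sumi\lambda_i=0$ yields $c_j=\tfrac25 a_j\lambda_j$ and hence $\|u_1\|^2=\tfrac{16\pi}{75}\sumi a_i^2\lambda_i^2$.  For $\|u_3\|^2=\|\phi\eta_1\|_{L^2}^2-\|u_1\|^2$ I would expand $\int\phi^2\eta_1^2\dvo$ using the degree-$6$ moments $\int x_i^6=4\pi/7$, $\int x_i^4x_j^2=4\pi/35$ ($i\ne j$), $\int x_1^2x_2^2x_3^2=4\pi/105$; after invoking $\sumi\lambda_i=0$ and $|a|^2=1$, the coefficient of $\sumi a_i^2\lambda_i^2$ cancels in $\tfrac12\|u_1\|^2-\tfrac76\|u_3\|^2$, leaving a pure multiple of $\sumi\lambda_i^2$.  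Substituting produces $\min_{\eta_2}G(\eta_1,\eta_2)=4\pi\lf(\tfrac{1}{90}-\bb\ri)\sumi\lambda_i^2$.  Conclusion (i) is then immediate: if $\bb<1/90$, then $G(\eta_1,\eta_2)\ge\min_{\eta_2}G(\eta_1,\cdot)>0$.  For (ii), if $\bb>1/90$, then for any admissible $\eta_1$ the choice $\eta_2=u_3/6$ (nonzero because $\|u_3\|^2$ contains the strictly positive contribution $\tfrac{8\pi}{105}\sumi\lambda_i^2$) achieves $G(\eta_1,\eta_2)<0$.
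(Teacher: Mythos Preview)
Your proof is correct and follows essentially the same strategy as the paper: both reduce the cross term to a pairing of $\eta_2$ with $\phi\eta_1$, project $\phi\eta_1$ onto $E_1$ to isolate the relevant piece, and carry out the same moment computations in the appendix (your $u_1$ is the paper's $\xi$, and $\|u_3\|^2=A-\|\xi\|^2$).  The one genuine streamlining in your write-up is that you recognize at the outset that $\phi\eta_1$, being the restriction of a homogeneous cubic, lies in $E_1\oplus E_3$; this lets you minimize exactly over the full spherical-harmonic decomposition of $\eta_2$ and read off $\min_{\eta_2}G=4\pi(\tfrac{1}{90}-\bb)\sum_i\lambda_i^2$ in one step.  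The paper instead bounds the quadratic form by the (sharp) inequality $\tfrac12(\Delta_0\tau_3)^2-|\nabla_0\tau_3|^2\ge\tfrac{5}{12}(\Delta_0\tau_3)^2$, applies Cauchy--Schwarz to the cross term, and then computes the discriminant $\beta^2-\alpha\gamma$; only afterward, for part (ii), does it verify directly that $\phi\eta_1-\xi\in E_3$.  Your route avoids that detour and makes the equality case transparent, at the cost of assuming the reader knows the $E_1\oplus E_3$ decomposition of restricted cubics.
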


\begin{proof}  Using the assumption $ \sumi \lambda_i = 0 $ and $ | a | = 1 $, it
is computed  in  Lemma \ref{lma-A-B} in the Appendix that
\bee
\begin{split}
 \int_{\mS^2} \eta_1^2 \phi^2 \dvo  = & \    \int_{\mS^2} \lf( \sumi a_i x_i \ri)^2 \lf( \sumi \lambda_i x_i^2 \ri)^2 \dvo \\
 & =  \ 16\pi\lf[\frac{2\sum_i a_i^2\lambda_i^2}{3\times 35}+\frac{ \sum_i \lambda_i^2}{2\times3\times 35}\ri] .
\end{split}
\eee
For simplicity, we write
\be \label{eq-expression-A}
 \displaystyle A = 16\pi\lf[\frac{2\sum_i a_i^2\lambda_i^2}{3\times 35}+\frac{ \sum_i \lambda_i^2}{2\times3\times 35}\ri]  .
 \ee

Next we define
\be
B = \int_{\mS^2}\phi \lf[  \frac{  (\Delta_0 \eta_1) ( \Delta_0\eta_2 )}{ 4} +\la\nabla_0\eta_1,\nabla_0\eta_2\ra\ri] \dvo
\ee
and claim
\bee
B = 10 \int_{\mS^2} \phi \eta_1 \eta_2 \dvo .
\eee
To see this, we note the following facts about $\phi$ and $\eta_1$:
\be
\Delta_0 \phi = - 6 \phi
\ee
where we used $\sumi \lambda_i = 0 $, and
\bee
\begin{split}
\la\nabla_0 \phi,\nabla_0  \eta_1\ra=&\la\ol\nabla\phi-\la\ol\nabla\phi, X \ra X ,\ol\nabla\eta_1 - \la\ol\nabla\eta_1, X \ra X \ra\\
=&\la\ol\nabla\phi, \ol\nabla\eta_1\ra-\la\ol\nabla\phi, X \ra\la\ol\nabla\eta_1, X \ra\\
=&2\sumi \lambda_i a_i x_i-2\phi\eta_1
\end{split}
\eee
where $\ol{\nabla} $ denotes the gradient on $\R^3$ and $ X = (x_1, x_2, x_3)$.
Now
\bee
\begin{split}
 B =  & \     \int_{\mS^2}\phi \lf[  \frac{  (\Delta_0 \eta_1) ( \Delta_0\eta_2 )}{ 4} +\la\nabla_0\eta_1,\nabla_0\eta_2\ra\ri] \dvo \\
=& \int_{\mathbb{S}^2}\lf[-\frac12\phi\eta_1\Delta_0 \eta_2+\la\nabla_0 \eta_1,
\nabla_0 (\phi\eta_2)\ra-\eta_2\la\nabla_0 \eta_1,
\nabla_0  \phi \ra \ri] dv_{\sigma_0} \\
=&\int_{\mathbb{S}^2}\lf[-\frac12\Delta_0 (\phi\eta_1)\eta_2-\phi\eta_2 \Delta_0 \eta_1
 -\eta_2\la\nabla_0 \eta_1,
\nabla_0  \phi \ra \ri] dv_{\sigma_0}\\
=&\int_{\mathbb{S}^2}\lf[4 \phi\eta_1 \eta_2+2\phi\eta_2 \eta_1
-2\eta_2 \la\nabla_0  \eta_1 ,  \nabla_0  \phi \ra \ri] dv_{\sigma_0}\\
=&10\int_{\mathbb{S}^2}  \phi\eta_1 \eta_2 dv_{\sigma_0} .
\end{split}
      \eee

To proceed, we let $\tau_2$ be the $L^2$ orthogonal projection of $\eta_2$ to the eigenspace of the second nonzero eigenvalue of $(\mS^2, \sigma_0)$
and let $\tau_3=\eta_2-\tau_2$. Then
\be \label{eq-eigen-est-C}
\begin{split}
& \ \int_{\mS^2}\lf(\frac{(\Delta_0\eta_2)^2}2-|\nabla_0\eta_2|^2
  \ri)\dvo\\
  =& \ \int_{\mS^2}\lf(\frac{(\Delta_0\tau_2)^2}2-|\nabla_0\tau_2|^2
  \ri)\dvo+\int_{\mS^2}\lf(\frac{(\Delta_0\tau_3)^2}2-|\nabla_0\tau_3|^2
  \ri)\dvo\\
  \ge& \ \frac13\int_{\mS^2}(\Delta_0\tau_2)^2\dvo+\frac{5}{12}\int_{\mS^2}
   (\Delta_0\tau_3)^2\dvo
\end{split}
\ee
where we have used  the assumption that $ \eta_2 $ is   $L^2$ orthogonal to $ \L (\sigma_0)$,
 Lemma  \ref{lma-Laplace-gradient}, and  the fact that the second and third nonzero eigenvalues of $(\mS^2,\sigma_0)$ are 6 and 12
respectively.

Note that  $\tau_2$ is the restriction to $\mS^2$ of a homogeneous polynomial  of degree two, hence $\phi \eta_1\tau_2$ is the restriction to $ \mS^2$ of a homogeneous polynomial  of degree five  which implies  $ \displaystyle \int_{\mS^2}\phi\eta_1\tau_2\dvo=0$.
Therefore
\be \label{eq-expression-B}
\begin{split}
B=10\int_{\mathbb{S}^2} \phi\eta_1\tau_3\dvo .
\end{split}
\ee
Now it follows from \eqref{eq-expression-A},  \eqref{eq-eigen-est-C} and \eqref{eq-expression-B} that
\be\label{eq-estofG}
\begin{split}
 G(\eta_1,\eta_2)\ge& \  \lf[4\pi \lf(\frac1{30}- \bb\ri)\sumi\lambda_i^2+\frac A2\ri]  - 2\times 10 \int_{\mathbb{S}^2} \phi\eta_1\tau_3\dvo \\
 & \ +\frac{5}{12}\int_{\mS^2} (\Delta_0\tau_3)^2 \dvo+
 \frac13\int_{\mS^2}(\Delta_0\tau_2)^2\dvo.
\end{split}\ee

Next, we make use of the fact  that  $\tau_3$ is $L^2$ orthogonal to $E_1$, the subspace spanned by $\{ x_1, x_2, x_3 \}$.  Therefore
$$
 \int_{\mathbb{S}^2} \phi\eta_1\tau_3\dvo=\int_{\mS^2}(\phi \eta_1-\xi)\tau_3 \dvo
$$
where $\xi$ is the $L^2$ orthogonal projection of $\phi \eta_1$ to $E_1$.  This implies
$$
|B| = 10 \lf| \int_{\mathbb{S}^2} \phi\eta_1\tau_3\dvo\ri| \le \frac56(A-\int_{\mS^2}\xi^2  dv_{\sigma_0})^\frac12\lf(\int_{\mS^2}(\Delta_0\tau_3 )^2 dv_{\sigma_0}\ri)^\frac12.
$$
To compute $\displaystyle \int_{\mS^2} \xi^2 \dvo $,
we have
\bee
\begin{split}
\int_{\mS^2} \phi \eta_1  x_1 dv_{\sigma_0}=& \int_{\mS^2}
(\lambda_1x_1^2+\lambda_2x_2^2+\lambda_3x_3^2) (a_1x_1^2+a_2x_1x_2+a_3x_1x_3) dv_{\sigma_0}\\
=&\int_{\mS^2}  a_1x_1^2
(\lambda_1x_1^2+\lambda_2x_2^2+\lambda_3x_3^2)dv_{\sigma_0}\\
=&4\pi a_1\lf(\frac{\lambda_1}5+\frac{\lambda_2}{15}+\frac{\lambda_3}{15}\ri)\\
=&\frac{8\pi a_1\lambda_1}{15}.
\end{split}
\eee
Similarly, for $i=2,3$,
$ \displaystyle
\int_{\mS^2}x_i\eta_1\phi dv_{\sigma_0}=\frac{8\pi b_i\lambda_i}{15} $.
Hence,
$
\xi= \frac25 \sum_{i=1}^3  a_i\lambda_i   x_i.
$
and
$$
\int_{\mS^2} ( \phi \eta_1 -  \xi )^2   dv_{\sigma_0}= A - \frac{16\pi}{75}\sum_ia_i^2\lambda_i^2.
$$
(In particular, this shows $ \phi_1 \eta_1 - \xi \neq 0 $ by \eqref{eq-expression-A}.)
Therefore,
\be \label{eq-refined-estofB}
|B|\le \frac{5}{6}\lf(A-\frac{16\pi}{75}\sum_{i=1}^3  a_i^2\lambda_i^2\ri)^\frac12\lf(\int_{\mS^2}(\Delta_0\tau_3 )^2 dv_{\sigma_0}\ri)^\frac12.
\ee
By  \eqref{eq-estofG} and \eqref{eq-refined-estofB}, we conclude that
\be \label{eq-estofG-1}
\begin{split}
 G(\eta_1,\eta_2) \ge &  \lf(\a  - 2\b t   + \gamma t^2 \ri) + \frac13\int_{\mS^2}(\Delta_0\tau_2)^2\dvo
  \end{split}
  \ee
where
$$
t= \lf(\int_{\mS^2}(\Delta_0\tau_3)^2\dvo\ri)^\frac12, \ \ \gamma=\frac5{12},
$$
$$
\a=4\pi \lf(\frac1{30}- \bb \ri)\sum_{i=1}^3   \lambda_i^2+\frac A2,
$$
$$
\b=\frac{5}{6}\lf(A-\frac{16\pi}{75}\sum_{i=1}^3  a_i^2\lambda_i^2\ri)^\frac12 .
$$
Direct calculation shows
 \be \label{eq-DELTA}
 \begin{split}
 \beta^2-\a\gamma  =  & \ \frac{25}{36}\lf(A-\frac{16\pi}{75}\sum_{i=1}^3  a_i^2\lambda_i^2\ri)-\frac5{12} \lf[ \lf(\frac{2\pi}{15} - 4 \pi \bb \ri)
 \sum_{i=1}^3 \lambda_i^2+\frac A2\ri]\\
 =&\frac{35}{72}A-\frac{4\pi}{27}\sum_{i=1}^3  a_i^2\lambda_i^2 -\frac{\pi}{18}  \sum_{i=1}^3 \lambda_i^2   +  \frac{5}{3}\pi \bb \sum_{i=1}^3 \lambda_i^2  \\
 =&\frac{35}{72}\cdot16\pi\lf[\frac{2\sumi a_i^2\lambda_i^2}{3\times 35}+\frac{ \sum_{i=1}^3  \lambda_i^2}{2\times3\times 35}\ri]-\frac{4\pi}{27}\sum_{i=1}^3   a_i^2\lambda_i^2-\frac{\pi}{18}\sumi \lambda_i^2   +  \frac{5}{3}\pi  \bb \sum_{i=1}^3   \lambda_i^2 \\
 =&\frac{4\pi}{27}\sumi a_i^2\lambda_i^2+\frac{\pi}{27}\sumi \lambda_i^2-\frac{4\pi}{27}\sumi a_i^2\lambda_i^2-\frac{\pi}{18}\sumi \lambda_i^2 +  \frac{5}{3}\pi \bb \sumi \lambda_i^2 \\
 =&- \lf( \frac{1}{54} -   \frac{5}{3} \bb \ri) \pi \sumi \lambda_i^2  .
 \end{split}
 \ee
 Therefore, if  $\displaystyle \bb < \frac{1}{90},$ by \eqref{eq-estofG-1} and \eqref{eq-DELTA} we have
\bee
\begin{split}
 G(\eta_1,\eta_2) \ge \frac{\alpha \gamma - \beta^2}{\gamma} =  4 \pi \lf( \frac{1}{90} -   \bb \ri) \sumi \lambda_i^2  > 0
\end{split}
\eee
which  proves (i).

To prove (ii), we  claim that the function $ \phi \eta_1 - \xi $ above is indeed an eigenfunction of the third nonzero eigenvalue $12$.
To verify this, we compute
\bee
\begin{split}
\Delta_0 (\phi \eta_1) = & \  (\Delta_0  \phi) \eta_1 + \phi (\Delta_0 \eta_1 ) + 2 \la \nabla_0 \phi, \nabla \eta_1 \ra \\
= & \  (-6) \phi \eta_1 + (-2) \phi \eta_1 + 2 \lf[ 2 \sumi a_i \lambda_i x_i - 2 \phi \eta_1 \ri] \\
= & \ (-12) \phi \eta_1 + 4 \sumi a_i \lambda_i x_i .
\end{split}
\eee
Therefore,
\bee
\begin{split}
\Delta_0 ( \phi \eta_1 - \xi )  = & \ (-12) \phi \eta_1 + 4 \sumi a_i \lambda_i x_i  + \frac{4}{5} \sumi   a_i \lambda_i x_i \\
= & \  (-12)  \lf( \phi \eta_1 -  \xi  \ri).
\end{split}
\eee

Now we fix an $a$ (hence $\eta_1$ is fixed), and let $ \eta_2 = k ( \phi \eta_1 - \xi ) $
where $ \xi $ is the defined above and $k$ is an arbitrary constant. Then
\bee \label{eq-estofG-2}
\begin{split}
 G(\eta_1,\eta_2)  = & \  \lf[4\pi \lf(\frac1{30}-b\ri)\sum_i\lambda_i^2+\frac A2\ri]  - 2 0 k   \int_{\mathbb{S}^2} ( \phi \eta_1 - \xi )^2 \dvo \\
 & \ +\frac{5}{12}  k^2 \int_{\mS^2} [\Delta_0 ( \phi \eta_1 - \xi )]^2 \dvo \\
 = & \ \alpha - 2 \beta t  + \gamma t^2 \\
 \end{split}
\eee
where $\alpha$, $\beta$ and $\gamma$ are defined as same as before and
$$ t =  12 \lf( A - \frac{16\pi}{75}\sum_ia_i^2\lambda_i^2 \ri)^\frac12 k . $$
Suppose $ \bb > \frac{1}{ 90} $,  it follows from \eqref{eq-DELTA}  that the above quadratic form of $t$ has
two distinctive roots. In particular, if  $k$ is chosen such that
$$ 12 \lf( A - \frac{16\pi}{75}\sum_ia_i^2\lambda_i^2 \ri)^\frac12 k = \frac{\beta}{\gamma} , $$
 then
\be  \label{eq-explicit-G}
G (\eta_1, k ( \phi \eta_1 - \xi) ) = \frac{\alpha \gamma - \beta^2}{\gamma} = 4 \pi \lf( \frac{1}{90} -   \bb \ri)  \sumi \lambda_i^2  < 0 .
\ee
This completes the proof.
\end{proof}

We are now ready to prove Theorem  \ref{thm-small-spheres-bz}. We first prove   part   (i):

\begin{proof}
Suppose (i) of Theorem \eqref{thm-small-spheres-bz} is not true, then there exist  two sequences of positive numbers $\{ r_k \}$, $\{ \e_k \}$ and
a sequence of functions $\{ \eta^{(k)} \} \subset W^{2,2} (\mS^2) $ such that
$$ r_k \rightarrow 0, \ \  \e_k \rightarrow 0 ,$$
\be\label{eq-thm-pf-2}
F_{\sigma_{r_k}, H_{(b)}(r_k)} (\eta^{(k)})< \e_kr_k^4\int_{\mS^2}(\Delta_{r_k}\eta^{(k)})^2dv_{\sigma_{r_k}}
\ee
and
\be \label{eq-average-etak}
\int_{\mS^2}\eta^{(k)}\dvk=0.
\ee
In the following, we  denote $\Delta_{r_k}$, $ \sigma_{r_k} $ by $\Delta_k$, $\sigma_k$ respectively.
We also let $\nabla_k $ denote the gradient on  $(\mS^2, \sigma_k)$.

On $\mS^2$, recall that $ \{ x_i \}$ are the restriction of the standard coordinate functions in $ \R^3$. Hence
$X_0 = (x_1, x_2, x_3) $ is an isometric embedding of $(\mS^2, \sigma_0)$.
By \eqref{eq-sigma-sigma0-S-z} and the result of Nirenberg (page 353 in \cite{Nirenberg}),
for each  large $k$, there exists an isometric embedding
$$
X_k  = (x_1^{(k)}, x_2^{(k)}, x_3^{(k)}): (\mS^2, \sigma_{k}) \longrightarrow \R^3
$$
satisfying
\be \label{eq-2nd-form-z}
|| x_i^{(k)} - x_i ||_{C^{2, \alpha} (\mS^2, \sigma_0) } = O (r_k^2) , \ \forall \ i = 1, 2, 3 .
\ee
Here, given an integer  $m$,  we use the notation
 $ O(r_k^m)$  to denote  some quantity $\psi$ satisfying   $ | \psi  | \le C r_k^m $ for a constant $C$ independent on $k $.
 Given such an $X_k$, we let  $\nu_0^{(k)} $ be the unit outward normal vector to $X_k (\mS^2)$ and
 $\Pi_0^{(k)} $ be the second fundamental form of $X_k (\mS^2)$ in $ \R^3$.
  It follows from
\eqref{eq-2nd-form-z} that
\be \label{eq-nu0-est}
|| \nu_0^{(k)} - X_0 ||_{C^{0,\alpha} (\mS^2, \sigma_0 )} = O (r_k^2)
\ee
and
\be \label{eq-Pi0-est}
|| \Pi_0^{(k)}  - \sigma_0 ||_{C^{0,\alpha} (\mS^2, \sigma_0)} = O (r_k^2) .
\ee

As before,  we let  $\L(\sigma_0) $ and $\L (\sigma_k)$ be  the subspaces of  $ W^{2,2}(\mS^2)$  which are spanned by
$\{ 1, x_1, x_2, x_3\}$ and  $\{1, x_1^{(k)}, x_2^{(k)}, x_3^{(k)} \}$ respectively.
For each $k$,  we decompose $\eta^{(k)}=\eta^{(k)}_1+\eta^{(k)}_2$, where
\bee
\eta^{(k)}_1 = a_0^{(k)} + \sumi a_i^{(k)}  x_i^{(k)} \in \mathcal{L}(\sigma_k)
\eee
and $\eta^{(k)}_2$ is $\sigma_k $-$L^2$ orthogonal to $\mathcal{L}(\sigma_k)$.
Let  $a^{(k)} = (a_1^{(k)}, a_2^{(k)}, a_3^{(k)} )$. Then
\be  \label{eq-F-etar}
\begin{split}
\ & \  F_{\sigma_k, H _{(b)} (r_k) } (\eta^{ (k) } ) \\
= & \ F_{\sigma_k, H_{(b)} (r_k) }  (\eta^{ (k) }_1) +
2 Q_{\sigma_k, H_{(b)} (r_k) }  (\eta^{ (k) }_1, \eta^{ (k) }_2 )
+  F_{\sigma_k, H_{(b)} (r_k) }  (\eta^{ (k) }_2 ) \\
\end{split}
\ee
where
\be \label{eq-Fetar1}
\begin{split}
F_{\sigma_k, H_{(b)} (r_k) }  (\eta^{ (k) }_1 )  = & \
 |a^{(k)} |^2 \int_{\mS^2} ( H_0 (r_k)    - H_{(b)} (r_k) ) \dvk  \\
 & \   + \int_{\mS^2}  \la a^{(k)} , \nu^{(k)}_0  \ra^2 \frac{ \lf( H_0 (r_k)  - H_{(b)} (r_k)  \ri)^2 }{H_{(b)}  (r_k)  }   \dvk ,
 \end{split}
\ee
\be \label{eq-Q-eta1-eta2}
\begin{split}
& \ Q_{\sigma_k, H_{(b)} (r_k) }  (\eta^{ (k) }_1, \eta^{ (k) }_2 )  \\
= & \ \int_{\mS^2}  \lf(  H_0 (r_k)  - H_{(b)} (r_k)   \ri)
\lf[  \frac{  ( \Delta_k \eta_1^{(k)}  ) ( \Delta_k  \eta_2^{(k)} )  }{   H_0 (r_k) H_{(b)} (r_k) }
+  \la \nabla_k  \eta_1^{(k)}  ,  \nabla_k  \eta_2^{(k)}  \ra_{\sigma_k}    \ri]  \dvk ,
\end{split}
\ee
and
\be \label{eq-F-etar2}
\begin{split}
& \  F_{\sigma_k, H_{(b)} (r_k) }  (\eta^{ (k) }_2 ) \\
=  & \  \int_{\mS^2} \lf[ \frac{ ( \Delta_k  \eta_2^{(k)}  )^2 }{ H_0 (r_k)  }  - \Pi_0^{(k)}  ( \nabla_k  \eta_2^{(k)}  ,\nabla_k \eta_2^{(k)} ) \ri] \dvk \\
& \ \  + \int_{\mS^2}    \lf( H_0 (r_k)  - H_{(b)} (r_k)  \ri)  \lf[   \frac{  ( \Delta_k  \eta_2^{(k)}  )^2  }{  H_0 (r_k) H_{(b)} (r_k)   }
+  |  \nabla_k  \eta_2^{(k)}  |^2_{\sigma_k}    \ri]  \dvk .
 \end{split}
\ee
By Lemma \ref{lem-supplement-1},   Lemma \ref{lma-Laplace-gradient},  \eqref{eq-sigma-sigma0-S-z} --
 \eqref{eq-H-0-Ssphere-z}  and \eqref{eq-def-Hb}, we have
 \be \label{eq-F-etar2-2}
 F_{\sigma_k, H_{(b)} (r_k) }  (\eta^{ (k) }_2 ) \ge   \lf[ \beta + O (r_k^2) \ri]   \int_{\mS^2} ( \Delta_k \eta^{(k)}_2 )^2 \dvk ,
 \ee
\be \label{eq-Q-eta1-eta2-2}
\begin{split}
& \  \lf| Q_{\sigma_k, H_{(b)} (r_k) }  (\eta^{ (k) }_1, \eta^{ (k) }_2 ) \ri|  \\
\le & \ C_1 r_{k}^2 \lf( \int_{\mS^2} ( \Delta_k \eta^{(k)}_1 )^2 \dvk \ri)^\frac12  \lf( \int_{\mS^2} ( \Delta_k \eta^{(k)}_2 )^2 \dvk \ri)^\frac12 .
\end{split}
\ee
  Here $C_1$ and $\beta  $ are some positive  constant  independent on $k$.

We normalize $\eta^{(k)}$ such that
\be\label{eq-thm-pf-3}
\int_{\mS^2}\lf[(\Delta_{k}\eta^{(k)}_1)^2+(\Delta_{k}\eta^{(k)}_2)^2\ri]\dvk=1.
\ee
Recall
\be \label{eq-formula-a-square}
\frac{( \Delta_k \eta_1^{(k)} )^2 }{H_0 (r_k)^2 } + | \nabla_k \eta_1^{(k)} |_{\sigma_k}^2 = | a^{(k)} |^2 .
\ee
Thus \eqref{eq-thm-pf-3} and \eqref{eq-formula-a-square},  together with Lemma \ref{lma-Laplace-gradient},
 \eqref{eq-sigma-sigma0-S-z} and \eqref{eq-H-0-Ssphere-z}, imply
that  there is a constant $C_2$ independent on $k$ such that
 \be \label{eq-upper-bd-a}
  | a^{(k)} | \le C_2 .
  \ee
It follows from  \eqref{eq-sigma-sigma0-S-z} - \eqref{eq-H-0-Ssphere-z},
 \eqref{eq-def-Hb}, \eqref{eq-BY-Ssphere-z-b},  \eqref{eq-2nd-form-z},  \eqref{eq-Fetar1} and  \eqref{eq-upper-bd-a}
that
\be \label{eq-F-etar-1-1}
| F_{\sigma_k, H_{(b)} (r_k) }  (\eta^{ (k) }_1 ) |  \le C_3    r_k^4
\ee
for some positive constants $C_3 $ independent on $k$.
By \eqref{eq-thm-pf-2}, \eqref{eq-F-etar}, \eqref{eq-Q-eta1-eta2-2}, \eqref{eq-thm-pf-3} and \eqref{eq-F-etar-1-1}, we then have
\bee
\lf[ \beta + O (r_k^2) \ri]   \int_{\mS^2} ( \Delta_k \eta^{(k)}_2 )^2 \dvk <  2 \e_kr_k^4
+  C_3     r_k^4  + 2 C_1  r_k^2
\eee
which  shows
\bee
\lim_{k \rightarrow \infty} \int_{\mS^2}(\Delta_k \eta_2^{(k)})^2dv_{\sigma_k} =  0 ,
\eee
and consequently
\bee
\lim_{k \rightarrow \infty} \int_{\mS^2}(\Delta_k \eta_1^{(k)})^2dv_{\sigma_k} =  1
\eee
by \eqref{eq-thm-pf-3}.
Therefore, for large $k$, by \eqref{eq-formula-a-square} we have
\be \label{eq-a-lower-bd}
| a^{(k)} | \ge  C_4
\ee
for some  positive constant $C_4$  independent on $k$.

Now  we renormalize $\eta^{(k)}$ such that $|a^{(k)}|=1$. By \eqref{eq-thm-pf-3} and \eqref{eq-a-lower-bd},
\be\label{eq-thm-pf-6}
\int_{\mS^2}\lf[(\Delta_{k}\eta^{(k)}_1)^2+(\Delta_{k}\eta^{(k)}_2)^2\ri]\dvk\le C_5
\ee
for some positive constant $C_5 $ independent of $k$.
Define
$$ \xi_k= \frac{\eta^{(k)}_2}{r_k^2} . $$
It follows from  \eqref{eq-sigma-sigma0-S-z} - \eqref{eq-H-0-Ssphere-z},  \eqref{eq-def-Hb},  \eqref{eq-BY-Ssphere-z-b}, \eqref{eq-thm-pf-2},
 \eqref{eq-2nd-form-z} - \eqref{eq-nu0-est} and  \eqref{eq-F-etar} - \eqref{eq-Fetar1} that
\be\label{eq-thm-pf-7}
\begin{split}
& \ \e_k\int_{\mS^2}(\Delta_{k}\eta^{(k)})^2dv_{\sigma_{k}}\\
\ge& \ 4\pi \lf[\lf(\frac1{30} - \bb\ri)\sum_i\lambda_i^2\ri]+\frac12 A_k  +O(r_k) \\
& \ + 2   r_k^{-2} Q_{\sigma_k, H_{(b)} (r_k) } (\eta_1^{(k)}, \xi_k ) + F_{\sigma_{k}, H_b(k)} (\xi_k) \\
\end{split}
\ee
where
\bee
\bb =  b - \frac{1}{60} \frac{ (\Delta_g R) (p) }{| \Ric (p) |^2}
\eee
and
\bee
\begin{split}
A_k   = & \    \int_{\mS^2} \lf( \sumi a_i^{(k)}  x_i \ri)^2 \lf( \sumi \lambda_i x_i^2 \ri)^2 \dvo \\
  =  & \ 16\pi\lf[\frac{2\sumi ( a_i^{(k)} )^2\lambda_i^2}{3\times 35}+\frac{ \sum_i \lambda_i^2}{2\times3\times 35}\ri]
\end{split}
\eee
(see the definition of $A$ in Proposition \ref{prop-FQ-2}).
Moreover, by \eqref{eq-F-etar2-2} and  \eqref{eq-Q-eta1-eta2-2}, we have
\be \label{eq-Q-eta1-eta2-2-2}
\begin{split}
& \  \lf|  r_k^{-2} Q_{\sigma_k, H_{(b)} (r_k) }  (\eta^{ (k) }_1, \xi_k ) \ri|  \\
\le & \ C_1\lf( \int_{\mS^2} ( \Delta_k \eta^{(k)}_1 )^2 \dvk \ri)^\frac12  \lf( \int_{\mS^2} ( \Delta_k \xi_k )^2 \dvk \ri)^\frac12
\end{split}
\ee
and
 \be \label{eq-F-etar2-2-2}
 F_{\sigma_k, H_{(b)} (r_k) }  ( \xi_k  ) \ge   \lf[ \beta + O (r_k^2) \ri]   \int_{\mS^2} ( \Delta_k \xi_k )^2 \dvk .
 \ee
 It follows from \eqref{eq-thm-pf-6} - \eqref{eq-F-etar2-2-2} that there exists a positive constant $ C_6 $ independent on $k$ such that
\be \label{eq-Lap-bound}
\int_{\mS^2}(\Delta_{k}\xi_k)^2dv_{\sigma_{k}}\le C_6 .
\ee

On the other hand, we still have  $ \xi_k  = r_k^{-2} \eta_2^{(k)} \in \L (\sigma_k )$. Hence,
\be \label{eq-xi-average}
\int_{\mS^2 } \xi_k \dvk = 0
\ee
and
\be \label{eq-xi-xi-average}
\int_{\mS^2} \xi_k x_i^{(k)} \dvk = 0 , \ \forall \ i = 1, 2, 3.
\ee
 By Lemma \ref{lma-Laplace-gradient}, \eqref{eq-sigma-sigma0-S-z}, \eqref{eq-Lap-bound}, \eqref{eq-xi-average}
 and the $L^2$-estimates, we know
\be \label{eq-W22-k-bound}
|| \xi_k ||_{W^{2,2} (\mS^2, \sigma_k)} \le  C_7
\ee
for some positive constant $C_7 $ independent on $k$.
This combined with \eqref{eq-sigma-sigma0-S-z} in turn shows
\be \label{eq-W22-0-bound}
|| \xi_k ||_{W^{2,2} (\mS^2, \sigma_0)} \le  C_8
\ee
for some positive constant $C_8 $ independent on $k$.
Therefore, there exists some $ \xi \in W^{2,2}(\mS^2)$ such that,
passing to a subsequence,
$\{ \xi_k \}$ converges  to $\xi $ weakly  in $W^{2,2}(\mS^2, \sigma_0)$
and strongly in $ W^{1,2} (\mS^2, \sigma_0) $.
Furthermore, it follows from   \eqref{eq-sigma-sigma0-S-z},  \eqref{eq-2nd-form-z}, \eqref{eq-xi-average} and \eqref{eq-xi-xi-average}
that  $ \xi \in \L (\sigma_0)$.

We will take limit in \eqref{eq-thm-pf-7}.  By  \eqref{eq-sigma-sigma0-S-z} - \eqref{eq-H-0-Ssphere-z},  \eqref{eq-def-Hb},
 \eqref{eq-2nd-form-z}, \eqref{eq-Pi0-est}, \eqref{eq-W22-k-bound} and \eqref{eq-W22-0-bound}, we have
\be \label{eq-F-etr-xi-k}
\begin{split}
&  \ F_{\sigma_{k}, H_{(b)}(r_k)} (\xi_k) \\
=&\int_{\mS^2} \lf[ \frac{ ( \Delta_k \xi_k )^2 }{ H_{(b)}(r_k)  } + ( H_0(k) - H_{(b)}(r_k) ) | \nabla_k \xi_k  |^2_{\sigma_k}  - \Pi^{(k)}_0 ( \nabla_k \xi_k ,\nabla_k \xi_k)\ri]\dvo+O(r_k^2)\\
=&\int_{\mS^2} \lf[ \frac{ ( \Delta_0\xi_k )^2 }{ 2 }     -   |\nabla_0 \xi_k|^2 \ri]\dvo+O(r_k^2) .
\end{split}
\ee
Since $ \{ \xi_k \} $ converges to $ \xi$ strongly in $W^{1,2}(\mS^2, \sigma_0)$ and
$ \{ \Delta_0 \xi_k \}$ converges to $ \Delta_0 \xi $  weakly in $L^2(\mS^2, \sigma_0)$,
\eqref{eq-F-etr-xi-k} implies
\be\label{eq-thm-pf-8}
\begin{split}
\liminf_{k\to\infty}F_{\sigma_{k}, H_{(b)}(k)} (\xi_k)\ge \int_{\mS^2} \lf[ \frac{ ( \Delta_0 \xi_0 )^2 }{ 2 }     -   |\nabla_0 \xi|^2 \ri]\dvo .
\end{split}
\ee

To take the limit of  $r_k^{-2} Q_{\sigma_k, H_{(b)} (r_k) } (\eta_1^{(k)}, \xi_k ) $, we can assume that
$ \{ a^{(k)} \} $ converges to some  $a  = (a_1, a_2, a_3) \in \mS^2 $ because $ | a^{(k)} | = 1 $.
By   \eqref{eq-H-r-Ssphere-z} \eqref{eq-H-0-Ssphere-z} and \eqref{eq-def-Hb}, we have
$
H_0 (r_k)-H_{(b)} (r_k)=-r_k^2\phi+O(r_k^3),
$
where
$ \displaystyle \phi=\sumi \lambda_ix_i^2 .$
 Similar to \eqref{eq-F-etr-xi-k},  we now have
\be\label{eq-thm-pf-9}
\begin{split}
 & \ r_k^{-2} Q_{\sigma_k, H_{(b)} (r_k) } (\eta_1^{(k)}, \xi_k )  \\
=& \ -\int_{\mS^2} \phi\lf[ \frac{ ( \Delta_k \eta^{(k)}_1 ) ( \Delta_k \xi_k )  }{ 4  }
 +   \la \nabla_k \eta^{(k)}_1 ,  \nabla_k \xi_k \ra_{\sigma_k}    \ri]\dvk+O(r_k)\\
=& \ -\int_{\mS^2} \phi\lf[ \frac{ ( \Delta_0 \eta^{(k)}_1 ) ( \Delta_0 \xi_k )  }{ 4  } +   \la \nabla_0 \eta^{(k)}_1 ,  \nabla_0 \xi_k \ra   \ri]\dvo+O(r_k)\\
=& \ -\int_{\mS^2} \phi\lf[ \frac{ \lf( \Delta_0 \lf(\sumi a_i^{(k)}x_i \ri)\ri) ( \Delta_0 \xi_k )  }{ 4  } +   \la \nabla_0 \lf(\sumi a_i^{(k)}x_i\ri) ,  \nabla_0 \xi_k \ra   \ri]\dvo+O(r_k)\\
=& \ -\int_{\mS^2} \lf[-\frac{ \la\nabla_0 \lf(\phi \Delta_0 \lf(\sumi a_i^{(k)}x_i \ri)\ri),\nabla_0 \xi_k \ra   }{ 4  } +   \phi\la \nabla_0 \lf(\sumi a_i^{(k)}x_i\ri) ,  \nabla_0 \xi_k \ra   \ri]\dvo+O(r_k)\\
\to & \  -\int_{\mS^2} \phi\lf[ \frac{ ( \Delta_0 \lf(\sumi a_ix_i\ri) ) ( \Delta_0 \xi )  }{ 4  } +   \la \nabla_0 \lf(\sumi a_ix_i\ri) ,  \nabla_0 \xi  \ra   \ri]\dvo,
\ \mathrm{as} \ k\to\infty
\end{split}
\ee
since $ \{ a^{(k) } \} $ converges to $a$ and $ \{ \xi_k \} $ converges to $ \xi$ strongly in $W^{1,2}(\mS^2, \sigma_0)$.

Combining \eqref{eq-thm-pf-6} - \eqref{eq-thm-pf-7} and  \eqref{eq-thm-pf-8} - \eqref{eq-thm-pf-9}, we conclude that
\be\label{eq-thm-pf-10}
\begin{split}
0 \ge & \ 4\pi \lf[\lf(\frac1{30} - \bb \ri)\sum_i\lambda_i^2\ri]+\frac 12 \int_{\mS^2} \lf( \sumi a_i  x_i \ri)^2 \phi^2 dv_{\sigma_0}  \\
&-2\int_{\mS^2} \phi\lf[ \frac{ ( \Delta_0 \lf(\sumi a_ix_i\ri) ) ( \Delta_0 \xi_k )  }{ 4  } +   \la \nabla_0 \lf(\sumi a_ix_i\ri) ,  \nabla_0 \xi  \ra   \ri]\dvo \\
& \  +\int_{\mS^2} \lf[ \frac{ ( \Delta_0 \xi_0 )^2 }{ 2 }     -   |\nabla_0 \xi|^2 \ri]\dvo .
\end{split}
\ee
Since
$$ | a | = 1, \ \  \xi \in \L(\sigma_0), \  \ \mathrm{and} \   \ \bb < \frac{1}{90}, $$
this  leads to a contradiction with
(i) of  Proposition \ref{prop-FQ-2}.
Therefore, (i) of Theorem \ref{thm-small-spheres-bz} is proved.
\end{proof}

Next, we prove (ii) of Theorem  \ref{thm-small-spheres-bz}.

\begin{proof}  Let
$ \displaystyle \bb =   b - \frac{1}{60} \frac{ (\Delta_g R) (p) }{| \Ric (p) |^2}   .$
Then
$ \bb > \frac{1}{90} $.  By (ii) of  Proposition \ref{prop-FQ-2}, given any
$ a = (a_1, a_2, a_3) $ with  $ | a | = 1 $,   there exists an $\eta_2 \in W^{2,2} (\mS^2)$
such that $ \eta_2 $ is $\sigma_0$-$L^2$ orthogonal to $ \L (\sigma_0) $
and
 \be \label{eq-G-eta1-eta2-T}
  \begin{split}
   G(\eta_1,\eta_2)= & \ 4\pi   \lf(\frac1{30}- \bb\ri)\sum_i\lambda_i^2 +\frac12  \int_{\mS^2} \eta_1^2 \phi^2 \dvo\\
&   \  -   2\int_{\mS^2}\phi \lf[ \frac{ (\Delta_0\eta_1) ( \Delta_0\eta_2 ) }4+\la\nabla_0\eta_1,\nabla_0\eta_2\ra\ri] \dvo\\
  & \ +\int_{\mS^2} \lf(\frac{\lf(\Delta_0\eta_2\ri)^2}2-|\nabla_0\eta_2|^2  \ri)\dvo  < 0 .
  \end{split}
\ee
Here  $ \eta_1 = \sumi a_i x_i  $ and $ \phi = \sumi \lambda_i x_i^2 . $

Let $ X_0 = (x_1, x_2, x_3) $.
For each small $r$,  let
$$
X_r  = (x_1^{(r)}, x_2^{(r)}, x_3^{(r)}): \mS^2 \longrightarrow \R^3
$$
be an isometric embedding of $(\mS^2, \sigma_r)$  satisfying
\be \label{eq-2nd-form-z-r}
|| x_i^{(r)} - x_i ||_{C^{2, \alpha} (\mS^2, \sigma_0) } = O (r^2) , \ \forall \ i = 1, 2, 3 .
\ee
 Let  $\nu_0^{(r)} $ be the unit outward normal vector to $X_r (\mS^2)$ and
 $\Pi_0^{(r)} $ be the second fundamental form of $X_r (\mS^2)$ in $ \R^3$.
  It follows from
\eqref{eq-2nd-form-z-r} that
\be \label{eq-nu0-est-r}
|| \nu_0^{(r)} - X_0 ||_{C^{0,\alpha} (\mS^2, \sigma_0)} = O (r^2)
\ee
and
\be \label{eq-Pi0-est-r}
|| \Pi_0^{(r)}  - \sigma_0 ||_{C^{0,\alpha} (\mS^2, \sigma_0)} = O (r^2) .
\ee

With $a $ and $\eta_2$ fixed,
we define
$$ \eta_1^{(r)} = \sumi a_i x_i^{(r)} \ \ \mathrm{and} \ \  \eta^{(r) } =  \eta_1^{(r)} + r^2 \eta_2 $$
for each small $r$. Then
\be  \label{eq-F-etar-r}
\begin{split}
\ & \  F_{\sigma_r, H _{(b)} (r) } (\eta^{ (r) } ) \\
= & \ F_{\sigma_r, H_{(b)} (r) }  (\eta_1^{(r)}) +
2 r^2 Q_{\sigma_r, H_{(b)} (r_r) }  (\eta_1^{(r)}, \eta_2 )
+  r^4 F_{\sigma_r, H_{(b)} (r) }  (\eta_2 ) .
\end{split}
\ee
We compare each term in  \eqref{eq-F-etar-r} with the corresponding term in \eqref{eq-G-eta1-eta2-T}.

First,
\be \label{eq-Fetar1-r}
\begin{split}
F_{\sigma_r, H_{(b)} (r) }  (\eta_1^{(r)} )  = & \
  \int_{\mS^2} ( H_0 (r)    - H_{(b)} (r) ) \dvr  \\
 & \   + \int_{\mS^2}  \la a, \nu^{(r)}_0  \ra^2 \frac{ \lf( H_0 (r)  - H_{(b)} (r)  \ri)^2 }{H_{(b)}  (r)  }   \dvr \\
 = & \ 4 \pi r^4  \lf( \frac{1}{30} - \bb \ri) \sum_{i=1}^3 \lambda_i^2  + O(r^5) \\
 & \  +  \frac12 r^4 \int_{\mS^2}  \eta_1^2  \phi^2   dv_{\sigma_0} + O (r^5)
\end{split}
 \ee
where we have used   \eqref{eq-sigma-sigma0-S-z} - \eqref{eq-H-0-Ssphere-z},
 \eqref{eq-def-Hb} - \eqref{eq-BY-Ssphere-z-b} and \eqref{eq-nu0-est-r}.

Next,  let $ \Delta_r $ and  $\nabla_r$ be the Laplacian and the gradient on $(\mS^2, \sigma_r)$ respectively.
Then
\be \label{eq-Q-eta1-eta2-r}
\begin{split}
& \ Q_{\sigma_r, H_{(b)} (r) }  (\eta_1^{(r)}, \eta_2 )  \\
= & \ \int_{\mS^2}  \lf(  H_0 (r)  - H_{(b)} (r)   \ri)
\lf[  \frac{  ( \Delta_r \eta_1^{(r)}  ) ( \Delta_r  \eta_2  )  }{   H_0 (r) H_{(b)} (r) }
+  \la \nabla_r  \eta_1^{(r)}  ,  \nabla_r  \eta_2 \ra_{\sigma_r}   \ri]  \dvr \\
= & \ \int_{\mS^2}  \lf( - \phi r^2 + O(r^3)  \ri)
\lf[  \frac{  ( \Delta_0 \eta_1  ) ( \Delta_0  \eta_2  )  + O(r^2)  }{  4 + O(r^2)  }
+  \la \nabla_0  \eta_1  ,  \nabla_0  \eta_2 \ra + O(r^2)      \ri]  ( 1 + O(r^2) \dvo \\
= & \ - r^2 \int_{\mS^2} \phi \lf[ \frac{\Delta_0 \eta_1 \Delta_0 \eta_2 }{4} + \la \nabla_0 \eta_1, \nabla_0 \eta_2 \ra \ri] \dvo + O(r^3)
\end{split}
\ee
where we have used  \eqref{eq-sigma-sigma0-S-z} - \eqref{eq-H-0-Ssphere-z},
 \eqref{eq-def-Hb} and \eqref{eq-2nd-form-z-r}.

Finally,
\be \label{eq-F-etar2-r}
\begin{split}
& \  F_{\sigma_r, H_{(b)} (r) }  (\eta_2 ) \\
=  & \  \int_{\mS^2} \lf[ \frac{ ( \Delta_r  \eta_2   )^2 }{ H_0 (r)  }  - \Pi_0^{(r)}  ( \nabla_r  \eta_2  ,\nabla_r  \eta_2 ) \ri] \dvr \\
& \ \  + \int_{\mS^2}    \lf( H_0 (r)  - H_{(b)} (r)  \ri)  \lf[   \frac{  ( \Delta_r  \eta_2  )^2  }{  H_0 (r) H_{(b)} (r)   }
+  |  \nabla_r  \eta_2  |^2_{\sigma_r}    \ri]  \dvr \\
= & \  \int_{\mS^2} \lf[ \frac{ ( \Delta_0  \eta_2)^2    + O(r^2)  }{ 2 + O(r^2)  }  - | \nabla_0 \eta_2 |^2 + O(r^2) \ri] \dvo  + O(r^2) \\
= & \  \int_{\mS^2} \lf[ \frac{ ( \Delta_0  \eta_2)^2   }{ 2  }  - | \nabla_0 \eta_2 |^2 \ri]  \dvo
 + O (r^2)
 \end{split}
\ee
where we have used  \eqref{eq-sigma-sigma0-S-z} - \eqref{eq-H-0-Ssphere-z},  \eqref{eq-def-Hb} and \eqref{eq-Pi0-est-r}.

It follows from \eqref{eq-F-etar-r} - \eqref{eq-F-etar2-r} that
\bee
F_{\sigma_r, H _{(b)} (r) } (\eta^{ (r) } )  = r^4 G(\eta_1, \eta_2) + O (r^5) .
\eee
Since $G(\eta_1, \eta_2) < 0 $, we conclude that there exists small $r_1 > 0$ such that
$ F_{\sigma_r, H _{(b)} (r) } (\eta^{ (r) } )  < 0 $ for any $ 0 < r < r_1$.
This completes the proof of  (ii) of Theorem \ref{thm-small-spheres-bz}.
\end{proof}

\section{Examples} \label{sect: no-fill-in}

We end this paper by  giving examples of positive functions $H$ on  $( \mS^2 , \sigma_0 )$
such that
\begin{itemize}
\item[(a)]  $  \int_{\mS^2} ( 2  - H) d v_{\sigma_0}  > 0 $

\vh

\item[(b)]
$ F_{\sigma_0, H} (\eta) < 0 $  {for  some} $\eta$.

\vh

\item[(c)] $||H-2||_{C^k(\mS^2, \sigma_0)} <  \e $ for any given $ \e > 0 $ and $ k \ge 2 $.

\end{itemize}
Such a function $H$ can be taken as one of  $H_{\bar{b}} (r) $ in the following.

\begin{thm}
Let $ \sigma_0 $ be the standard metric on $ \mS^2$.
Let $ \lambda_1, \lambda_2, \lambda_3$ be three constants satisfying
$ \sumi \lambda_i = 0 $ and $ \sumi \lambda_i^2 > 0$.
Define a $2$-parameter family of functions $\{ H_{\bar{b}} (r) \}$ on $ \mS^2$ by
\bee
\Hbbr = 2 +  r^2 \sumi \lambda_i x_i^2  -  \lf( \frac{1}{30} - \bar{b} \ri) r^4 \sumi \lambda_i^2
\eee
where
$ \bb \in \lf(  \frac{1}{90} , \frac{1}{30} \ri] $ {and}
$  r \in (0, \tilde{r}] .$
Here $\tilde{r} > 0 $ is any fixed constant such that $ \Hbbr $ is everywhere  positive.
(For instance,  $ \tilde{r}$  can be chosen such that
$
2  -   \tilde{r}^2 \sumi | \lambda_i |  -  \frac{1}{45} \tilde{r}^4 \sumi  \lambda_i^2  > 0 .
$)

Then
\begin{enumerate}
\item[(i)]  $  \int_{\mS^2} [ 2  - \Hbbr ] d v_{\sigma_0}  > 0 $, $ \forall  \ \bar{b} \in  \left(\frac{1}{90},  \frac{1}{30} \right)$
and $ \forall \ r \in (0, \tilde{r}]$.

\vh

\item[(ii)]  $\exists $  $ \tilde{C} >0 $ independent on $ \bar{b}$ such that
$ F_{\sigma_0, \Hbbr} (\eta) < 0 \ for \ some \ \eta $
 whenever
$  \bar{b} \in \left(\frac{1}{90},  \frac{1}{30} \right] \ and \
 0 < r < C  \lf( \frac{1}{90} -   \bb \ri)  \sumi \lambda_i^2  . $

 \end{enumerate}

\end{thm}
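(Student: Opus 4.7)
The plan is to imitate the strategy of the proof of Theorem \ref{thm-small-spheres-bz}(ii), but in the substantially simpler setting where the underlying metric is the fixed round metric $\sigma_0$ and only the mean curvature function varies with $r$ and $\bb$. In particular, since $(\mS^2, \sigma_0)$ is already embedded in $\R^3$ as the unit sphere, we always have $H_0 \equiv 2$, $\nu_0 = (x_1, x_2, x_3)$, and $\Pi_0 \equiv \sigma_0$; no perturbative control of isometric embeddings is needed.

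For (i), I would compute $\int_{\mS^2}[2 - \Hbbr]\dvo$ directly. Since $\int_{\mS^2} x_i^2 \dvo = 4\pi/3$ for each $i$ and $\sumi \lambda_i = 0$, the $r^2$-term vanishes and one is left with
\[
\int_{\mS^2}[2 - \Hbbr]\dvo = 4\pi \lf(\tfrac{1}{30} - \bb\ri) r^4 \sumi \lambda_i^2,
\]
which is strictly positive under the hypothesis $\bb \in (\tfrac{1}{90}, \tfrac{1}{30})$.

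For (ii), fix any unit vector $a = (a_1, a_2, a_3)$ and set $\eta_1 = \sumi a_i x_i \in \L(\sigma_0)$. Since $\bb > 1/90$, Proposition \ref{prop-FQ-2}(ii) produces an $\eta_2 \in W^{2,2}(\mS^2)$ that is $\sigma_0$-$L^2$ orthogonal to $\L(\sigma_0)$ and satisfies the explicit identity $G(\eta_1,\eta_2) = 4\pi(\tfrac{1}{90} - \bb)\sumi \lambda_i^2 < 0$, which is precisely the value realized in \eqref{eq-explicit-G}. With the test function $\eta^{(r)} := \eta_1 + r^2 \eta_2$, I would split
\[
F_{\sigma_0,\Hbbr}(\eta^{(r)}) = F_{\sigma_0,\Hbbr}(\eta_1) + 2 r^2 Q_{\sigma_0,\Hbbr}(\eta_1,\eta_2) + r^4 F_{\sigma_0,\Hbbr}(\eta_2),
\]
apply Lemma \ref{lem-earlier-2} to the first summand, and Taylor-expand $1/\Hbbr = 1/2 + O(r^2)$ together with $2-\Hbbr = -r^2\phi + O(r^4)$ in the remaining two. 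Matching the resulting terms against the definition of $G$, I expect to obtain
\[
F_{\sigma_0,\Hbbr}(\eta^{(r)}) = r^4\, G(\eta_1,\eta_2) + r^5\, E(r,\bb,\eta_1,\eta_2),
\]
with $|E| \le M$ for a constant $M$ depending only on $\eta_1$, $\eta_2$, and on $\sumi |\lambda_i|$, $\sumi \lambda_i^2$. Consequently $F_{\sigma_0,\Hbbr}(\eta^{(r)}) < 0$ as soon as $r M < 4\pi(\bb - \tfrac{1}{90})\sumi \lambda_i^2$, which is the required explicit linear bound on $r$.

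The only technical point is verifying that the constant in the $O(r^5)$ remainder can be taken independent of $\bb$. Since $\bb$ ranges over the compact interval $(1/90, 1/30]$ and $\Hbbr$ is a fixed low-degree polynomial in the data $(r,\bb,x)$, the relevant Taylor coefficients and the lower bound $\inf \Hbbr > 0$ are uniform in $\bb$, so this uniformity is immediate from the explicit formula for $\Hbbr$. Once it is in place, the conclusion follows directly from Proposition \ref{prop-FQ-2}(ii). (As written, the inequality $0 < r < C(\tfrac{1}{90} - \bb)\sumi \lambda_i^2$ in the statement has a sign issue, since the right-hand side is negative for $\bb > 1/90$; what the argument naturally delivers is the positive bound $r < C(\bb - \tfrac{1}{90})\sumi \lambda_i^2$.)
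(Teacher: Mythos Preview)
Your proposal is correct and follows essentially the same approach as the paper: the same direct computation for (i), and for (ii) the same test function $\eta^{(r)} = \eta_1 + r^2\eta_2$ with $\eta_2$ chosen via \eqref{eq-explicit-G}, the same three-term splitting, the same Taylor expansion yielding $F_{\sigma_0,\Hbbr}(\eta^{(r)}) = r^4 G(\eta_1,\eta_2) + O(r^5)$ with remainder constants uniform in $\bb$. You also correctly identify the sign typo in the statement; the paper's own proof concludes with the bound $0 < r < C(\bb - \tfrac{1}{90})\sumi\lambda_i^2$, matching what your argument delivers.
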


\begin{proof}
Since  $ \sumi \lambda_i = 0 $, $ \sumi \lambda_i^2 > 0$  and $ \bar{b} < \frac{1}{30} $, we have
\bee
\int_{\mS^2} [ 2  - \Hbbr ] \dvo =  4 \pi r^4 \lf( \frac{1}{30} - \bar{b} \ri)    \sumi \lambda_i^2  > 0
\eee
which proves (i).

Since $ \bar{b} > \frac{1}{90} $, by \eqref{eq-explicit-G}
we know    for any
$ a = (a_1, a_2, a_3) $ with  $ | a | = 1 $, there exists an $\eta_2 \in W^{2,2} (\mS^2)$
such that $ \eta_2 $ is $\sigma_0$-$L^2$ orthogonal to $ \L (\sigma_0) $
and
 \be \label{eq-G-eta1-eta2-T-end}
  \begin{split}
   G(\eta_1,\eta_2)= & \ 4\pi   \lf(\frac1{30}- \bb\ri)\sum_i\lambda_i^2 +\frac12  \int_{\mS^2} \eta_1^2 \phi^2 \dvo\\
&   \  -   2\int_{\mS^2}\phi \lf[ \frac{ (\Delta_0\eta_1) ( \Delta_0\eta_2 ) }4+\la\nabla_0\eta_1,\nabla_0\eta_2\ra\ri] \dvo\\
  & \ +\int_{\mS^2} \lf(\frac{\lf(\Delta_0\eta_2\ri)^2}2-|\nabla_0\eta_2|^2  \ri)\dvo  \\
  = & \  4 \pi \lf( \frac{1}{90} -   \bb \ri)  \sumi \lambda_i^2  < 0
  \end{split}
  \ee
where  $ \eta_1 = \sumi a_i x_i  $ and $ \phi = \sumi \lambda_i x_i^2 . $

With such $ \eta_1 $ and $\eta_2$ fixed, for each small $r$,
define
$$ \eta^{(r) } =  \eta_1 + r^2 \eta_2 .$$
Similar to the proof of (ii) of Theorem \ref{thm-small-spheres-bz}, we have
\be  \label{eq-F-etar-r-S5}
\begin{split}
\ & \  F_{\sigma_0, \Hbbr } (\eta^{ (r )}) = F_{\sigma_0, \Hbbr }  (\eta_1) +
2 r^2 Q_{\sigma_0, \Hbbr }  (\eta_1, \eta_2 )
+  r^4 F_{\sigma_0, \Hbbr }  (\eta_2 )
\end{split}
\ee
where
\be \label{eq-Fetar1-r-S5}
\begin{split}
& \ F_{\sigma_0, \Hbbr }  (\eta_1  )  \\
= & \   \int_{\mS^2} [ 2  - \Hbbr ] \dvo
   + \int_{\mS^2}  \eta_1^2 \frac{ \lf[ 2  - \Hbbr  \ri]^2 }{ \Hbbr }   \dvo \\
 = & \ 4 \pi r^4  \lf( \frac{1}{30} - \bb \ri) \sum_{i=1}^3 \lambda_i^2
   +  \frac12 r^4 \int_{\mS^2}  \eta_1^2  \phi^2   dv_{\sigma_0} + O (r^5) ,
\end{split}
 \ee
\be \label{eq-Q-eta1-eta2-r-S5}
\begin{split}
& \ Q_{\sigma_0, \Hbbr }  (\eta_1 , \eta_2 )  \\
= & \ \int_{\mS^2}  \lf[  2   - \Hbbr   \ri]
\lf[  \frac{  ( \Delta_0 \eta_1   ) ( \Delta_0  \eta_2  )  }{   2  \Hbbr }
+  \la \nabla_0  \eta_1  ,  \nabla_0  \eta_2 \ra_{\sigma_0}   \ri]  \dvo \\
= & \ - r^2 \int_{\mS^2} \phi \lf[ \frac{\Delta_0 \eta_1 \Delta_0 \eta_2 }{4} + \la \nabla_0 \eta_1, \nabla_0 \eta_2 \ra \ri] \dvo + O(r^3)
\end{split}
\ee
and
\be \label{eq-F-etar2-r-S5}
\begin{split}
& \  F_{\sigma_0, \Hbbr ) }  (\eta_2 ) \\
=  & \  \int_{\mS^2} \lf[ \frac{ ( \Delta_0  \eta_2   )^2 }{ 2  } - | \nabla_0 \eta_2 |^2 \ri] \dvo \\
& \ \  + \int_{\mS^2}    \lf[  2  - \Hbbr  \ri]  \lf[   \frac{  ( \Delta_0 \eta_2  )^2  }{  2 \Hbbr   }
+  |  \nabla_0  \eta_2  |^2_{\sigma_0}    \ri]  \dvo \\
= & \  \int_{\mS^2} \lf[ \frac{ ( \Delta_0  \eta_2)^2   }{ 2  }  - | \nabla_0 \eta_2 |^2 \ri]  \dvo
 + O (r^2).
 \end{split}
\ee
Here it is important to note that $ O(r^k)$  denotes a quantity $f$ that satisfies
 $ | f | \le C r^k $ for some constant $ C $ independent on $ \bar{b} \in \lf( \frac{1}{90}, \frac{1}{30} \ri)$.

It follows from \eqref{eq-G-eta1-eta2-T-end} - \eqref{eq-F-etar2-r-S5} that
\be
\begin{split}
F_{\sigma_r, \Hbbr } (\eta^{ (r )}) = & \ r^4 G(\eta_1, \eta_2) + O (r^5) \\
\le & \ r^4 \lf[  4 \pi \lf( \frac{1}{90} -   \bb \ri)  \sumi \lambda_i^2  + C r  \ri]  \\
\end{split}
\ee
for some constant $ {C} > 0 $ independent on $ \bar{b}$.
Therefore,
$$  F_{\sigma_r, \Hbbr } (\eta^{ (r )})   < 0$$
whenever $ 0 < r < C  4 \pi \lf( \bb -  \frac{1}{90}\ri)  \sumi \lambda_i^2 $.
This proves (ii).
\end{proof}

\section{Appendix}

\begin{lemma}  \label{lem-xixjxk}
Let $\sigma_0 $ be the standard metric on $\mS^2 = \{ | x | = 1 \} $ in  $ \R^3$. Then
\be
\begin{split}
 & \ \int_{\mS^2}x_1^{2k} \dvo =\frac{4\pi}{2k+1}; \
\int_{\mS^2}x_1^2x_2^2 \dvo = \frac{4\pi}{15}; \\
& \ \int_{\mS^2}x_1^{4}x_2^2 \dvo = \frac{4\pi}{35}; \
\int_{\mS^2}x_1^{2}x_2^2x_3^2 \dvo = \frac{4\pi}{3\times35}. \\
\end{split}
\ee
\end{lemma}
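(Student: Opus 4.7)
The plan is to prove the four identities in sequence, each reducing to the previous via the identity $x_1^2+x_2^2+x_3^2=1$ on $\mS^2$ and the symmetry of $\dvo$ under permutations of the coordinate functions.

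First I would establish the general formula $\int_{\mS^2} x_1^{2k}\,\dvo = \frac{4\pi}{2k+1}$. The cleanest route is the Gaussian integral trick: writing $\int_{\R^3} x_1^{2k} e^{-|x|^2}\,dV$ in two ways -- once as a product of one-dimensional integrals yielding $\pi\,\Gamma(k+\tfrac12)$, and once in polar coordinates as $\bigl(\tfrac12\Gamma(k+\tfrac32)\bigr)\int_{\mS^2} x_1^{2k}\,\dvo$ -- and using $\Gamma(k+\tfrac32) = (k+\tfrac12)\Gamma(k+\tfrac12)$. Alternatively, one can parametrize in spherical coordinates and reduce to standard Wallis/Beta function integrals; either way the computation is routine.

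Next I would obtain $\int_{\mS^2} x_1^2 x_2^2\,\dvo$ by multiplying the spherical constraint $1 = x_1^2+x_2^2+x_3^2$ by $x_1^2$, integrating, and invoking symmetry $\int x_1^2 x_2^2 = \int x_1^2 x_3^2$:
\begin{equation*}
\tfrac{4\pi}{3} = \int_{\mS^2} x_1^2\,\dvo = \int_{\mS^2} x_1^4\,\dvo + 2\int_{\mS^2} x_1^2 x_2^2\,\dvo,
\end{equation*}
so using $\int x_1^4 = \tfrac{4\pi}{5}$ gives $\int x_1^2 x_2^2 = \tfrac{4\pi}{15}$. The same maneuver with $x_1^4$ in place of $x_1^2$ yields
\begin{equation*}
\tfrac{4\pi}{5} = \int_{\mS^2} x_1^6\,\dvo + 2\int_{\mS^2} x_1^4 x_2^2\,\dvo,
\end{equation*}
and together with $\int x_1^6 = \tfrac{4\pi}{7}$ this gives $\int x_1^4 x_2^2 = \tfrac{4\pi}{35}$. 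Finally, multiplying $1 = x_1^2+x_2^2+x_3^2$ by $x_1^2 x_2^2$ and using the symmetry $\int x_1^4 x_2^2 = \int x_1^2 x_2^4$ yields
\begin{equation*}
\tfrac{4\pi}{15} = 2\int_{\mS^2} x_1^4 x_2^2\,\dvo + \int_{\mS^2} x_1^2 x_2^2 x_3^2\,\dvo,
\end{equation*}
so the last integral equals $\tfrac{4\pi}{15} - \tfrac{8\pi}{35} = \tfrac{4\pi}{3\times 35}$.

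There is no real obstacle here; the only non-trivial step is the base case $\int_{\mS^2} x_1^{2k}\,\dvo = \tfrac{4\pi}{2k+1}$, and even that is a textbook calculation. The rest is pure bookkeeping driven by the relation $\sum_i x_i^2 = 1$ and the $S_3$-symmetry of the integrand families.
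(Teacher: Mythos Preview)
Your proof is correct and follows essentially the same approach as the paper: reduce each mixed moment to pure moments $\int x_1^{2k}\,\dvo$ using the constraint $x_1^2+x_2^2+x_3^2=1$ together with permutation symmetry. The only cosmetic differences are that the paper handles the second and third integrals by writing $\int x_1^{2m} x_2^2 = \tfrac12\int x_1^{2m}(1-x_1^2)$ directly (your rearrangement of the same identity), and for the fourth integral the paper multiplies $x_1^2$ by $(x_1^2+x_2^2+x_3^2)^2$ and expands, whereas your route via $x_1^2 x_2^2 \cdot (x_1^2+x_2^2+x_3^2)$ is a bit shorter.
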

\begin{proof} The first integral follows directly from integration using polar coordinates.
To verify the second and the third integral, one has
\bee
\begin{split}
\int_{\mS^2}x_1^2x_2^2 \dvo  =  & \ \frac12 \int_{\mS^2}x_1^2 ( x_2^2 + x_3^2)  \dvo  \\
= & \ \frac12 \int_{\mS^2}x_1^2 ( 1 - x_1^2 )  \dvo \\
= & \ \frac{4\pi}{15}
\end{split}
\eee
and
\bee
\begin{split}
\int_{\mS^2}x_1^{4}x_2^2 \dvo =&\frac12\int_{\mS^2}x_1^4(x_2^2+x_3^2) \dvo \\
 =&\frac12\int_{\mS^2}x_1^4(1-x_1^2) \dvo \\
 =&\frac{4\pi}{35}
\end{split}
\eee
using  the first integral. To check the fourth integral, one notes that
\bee
\begin{split}
\frac{4\pi}3=&\int_{\mS^2}x_1^2 \dvo \\
=& \int_{\mS^2}x_1^2(x_1^2+x_2^2+x_3^2)^2  \dvo \\
 =&\int_{\mS^2}x_1^2(x_1^4+x_2^4+x_3^4+2x_1^2x_2^2+2x_1^2x_3^2+2x_2^2x_3^2)  \dvo  \\
 =&4\pi\lf( \frac17+\frac1{35}+\frac1{35}+\frac2{35}+\frac2{35}\ri)+2\int_{\mS^2}x_1^2x_2^2x_3^2 \dvo
\end{split}
\eee
So
\bee
2\int_{\mS^2}x_1^2x_2^2x_3^2 \dvo =4\pi\lf(\frac13-\frac{11}{35}\ri)=\frac{8 \pi} {3\times 35}.
\eee
\end{proof}

\begin{lemma} \label{lma-A-B}
Let $\sigma_0 $ be the standard metric on $\mS^2 = \{ | x | = 1 \} $ in  $ \R^3$.
Let $a_1, a_2, a_2 $ be three constants satisfying $\sumi a_i^2  = 1 $
and
 $\lambda_1, \lambda_2, \lambda_3$ be three constants satisfying
$ \sum_{i=1}^3 \lambda_i = 0 $.
Then
 $$
  \int_{\mathbb{S}^2} \lf( \sumi a_i x_i \ri)^2 \lf( \sumi \lambda_i x_i^2 \ri)^2 dv_{\sigma_0}=16\pi\lf[\frac{2\sum_i a_i^2\lambda_i^2}{3\times 35}+\frac{ \sum_i \lambda_i^2}{2\times3\times 35}\ri] .$$
\end{lemma}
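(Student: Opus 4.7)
The plan is a direct expansion of the integrand followed by an application of Lemma \ref{lem-xixjxk}, with the constraint $\sum_i \lambda_i = 0$ used at the very end to put the answer in the stated form.

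First, I would expand
\[
\lf(\sumi a_i x_i\ri)^2 = \sum_{i,j} a_i a_j x_i x_j.
\]
Since $(\sumi \lambda_k x_k^2)^2$ is a polynomial in $x_1^2, x_2^2, x_3^2$, the factor $x_i x_j$ produces a monomial with all even exponents only when $i=j$; all off-diagonal terms $i\neq j$ are odd in at least one variable and thus integrate to zero on $\mS^2$. Hence
\[
\int_{\mS^2}\lf(\sumi a_i x_i\ri)^2 \lf(\sumi \lambda_k x_k^2\ri)^2 \dvo
= \sum_i a_i^2 \int_{\mS^2} x_i^2 \lf(\sumi \lambda_k x_k^2\ri)^2 \dvo.
\]

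Next I would expand the inner square and split the sum into the diagonal part $k=l$ and the off-diagonal part $k\neq l$:
\[
\int_{\mS^2} x_i^2 \lf(\sumi \lambda_k x_k^2\ri)^2 \dvo
= \sum_k \lambda_k^2 \int_{\mS^2} x_i^2 x_k^4 \dvo + 2\sum_{k<l} \lambda_k \lambda_l \int_{\mS^2} x_i^2 x_k^2 x_l^2 \dvo.
\]
By Lemma \ref{lem-xixjxk}, $\int x_i^6 \dvo = \frac{4\pi}{7}$, $\int x_i^2 x_k^4 \dvo = \frac{4\pi}{35}$ for $i\neq k$, $\int x_i^4 x_l^2 \dvo = \frac{4\pi}{35}$, and $\int x_1^2 x_2^2 x_3^2 \dvo = \frac{4\pi}{3\times 35}$. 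Plugging these in and collecting the coefficients of $a_i^2 \lambda_k^2$ and $a_i^2 \lambda_k \lambda_l$ ($k\neq l$) gives an expression of the form
\[
A \sum_i a_i^2 \lambda_i^2 + B \sum_i a_i^2 \sum_{k\neq i}\lambda_k^2 + C \sum_i a_i^2 \sum_{k\neq l}\lambda_k \lambda_l + D \sum_i a_i^2 \sum_{\substack{k\neq l\\k,l\neq i}}\lambda_k \lambda_l,
\]
for explicit numerical constants $A,B,C,D$.

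Finally I would use $\sumi \lambda_i = 0$, which gives $\sum_{k\neq l}\lambda_k \lambda_l = -\sumi \lambda_i^2$ and, for each fixed $i$, $\sum_{\{k,l\}\neq i}\lambda_k \lambda_l = 2\lambda_i(-\lambda_i) + 2\lambda_j \lambda_m = -2\lambda_i^2 - (\lambda_j+\lambda_m)^2 + \sum_k \lambda_k^2 = \cdots$ — more cleanly, I would write each off-diagonal $\lambda$-sum in terms of $\sumi \lambda_i^2$ and $\sumi a_i^2 \lambda_i^2$ (using $\sum_i a_i^2 = 1$). Combining everything and simplifying should collapse the four terms to $\frac{32\pi}{3\times 35}\sumi a_i^2 \lambda_i^2 + \frac{8\pi}{3\times 35}\sumi \lambda_i^2$, which is the stated right-hand side.

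The computation is entirely routine; the only place requiring care is the bookkeeping of which $(i,k,l)$ triples produce each monomial $x_1^{2p}x_2^{2q}x_3^{2s}$ with the correct multiplicity, and the algebraic simplification at the end using both constraints $\sumi a_i^2 = 1$ and $\sumi \lambda_i = 0$. This is the step I would double-check, but there is no conceptual obstacle.
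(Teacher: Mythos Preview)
Your proposal is correct and follows essentially the same approach as the paper: both arguments kill the off-diagonal $a_i a_j$ terms by parity, reduce to computing $\int_{\mS^2} x_i^2(\sum_k \lambda_k x_k^2)^2\,dv_{\sigma_0}$ via Lemma \ref{lem-xixjxk}, and then simplify using $\sum_i \lambda_i = 0$ and $\sum_i a_i^2 = 1$. The only cosmetic difference is that the paper applies $\sum_i \lambda_i = 0$ earlier (while evaluating each $\int x_i^2(\cdots)^2$ separately) rather than deferring all algebraic simplification to a final $A,B,C,D$ bookkeeping step as you do.
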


\begin{proof}
Let $ \displaystyle A =  \int_{\mathbb{S}^2} \lf( \sumi a_i x_i \ri)^2 \lf( \sumi \lambda_i x_i^2 \ri)^2 dv_{\sigma_0}$.
By Lemma \ref{lem-xixjxk},
 \bee
 \begin{split}
            & \    \int_{\mathbb{S}^2}x_1^2  \lf(\sum_i\lambda_ix_i^2 \ri)^2   dv_{\sigma_0} \\
               = \ & \int_{\mathbb{S}^2}  x_1^2\lf(\sum_{i}\lambda_i^2x_i^4+2\lambda_1\lambda_2
               x_1^2x_2^2+2\lambda_1\lambda_3x_1^2x_3^2 +2\lambda_2\lambda_3x_2^2x_3^2\ri)  dv_{\sigma_0}\\
               = \ & {4\pi} \lf(\frac{\lambda_1^2}7+\frac{\lambda_2^2}{35}+
               \frac{\lambda_3^2}{35}+\frac{2\lambda_1\lambda_2}{35}+\frac{2\lambda_1\lambda_3}{35}
               +\frac{2\lambda_2\lambda_3}{3\times35}\ri)\\
               = \ & 4\pi \lf(\frac{\lambda_1^2}7-\frac{2\lambda_1^2}{35}+
               \frac1{35}(\lambda_2+\lambda_3)^2-\frac{4\lambda_2\lambda_3}{3\times35}\ri)\\
               = \ & 16\pi\lf(\frac{ \lambda_1^2}{35}-\frac{ \lambda_2\lambda_3}{3\times35}\ri)\\
               = \ &16\pi\lf(\frac{ \lambda_1^2}{35}-\frac{ \lambda_1^2- \lambda_2^2-\lambda_3^2}{2\times3\times35}\ri)
               \end{split}
               \eee
where one   uses  the fact
$2\lambda_2\lambda_3=(\lambda_2+\lambda_3)^2- \lambda_2^2-\lambda_3^2=\lambda_1^2- \lambda_2^2-\lambda_3^2 .$
               Similarly,
               \bee
               \int_{\mathbb{S}^2}x_2^2  \lf(\sum_i\lambda_ix_i^2\ri)^2   dv_{\sigma_0}=16\pi\lf(\frac{ \lambda_2^2}{35}-\frac{ \lambda_2^2- \lambda_3^2-\lambda_1^2}{2\times3\times35}\ri),
               \eee

           \bee
               \int_{\mathbb{S}^2}x_3^2  \lf(\sum_i\lambda_ix_i^2 \ri)^2   dv_{\sigma_0}=16\pi\lf(\frac{\lambda_3^2}{35}-\frac{ \lambda_3^2- \lambda_1^2-\lambda_2^2}{2\times3\times35}\ri).
               \eee
    On the other hand,
    $ \displaystyle
    \int_{\mathbb{S}^2}x_i x_j    \lf(\sum_i\lambda_ix_i^2 \ri)^2  dv_{\sigma_0}=0
    $ for $ \forall \ i \neq j $.
Hence, using the fact $ \sumi a_i^2 =1$, one concludes
    \bee
    \begin{split}
    A=& \ \frac{16\pi}{35}\lf[\sumi  a_i^2 \lambda_i^2+\frac1{2\times 3}\lf(-\sumi a_i^2\lambda_i^2+a_1^2(\lambda_2^2+\lambda_3^3)
    +a_2^2(\lambda_3^2+\lambda_1^3)+a_3^2(\lambda_1^2+\lambda_2^3)\ri)\ri]\\
    =& \ \frac{16\pi}{35}\lf[\sumi a_i^2\lambda_i^2+\frac1{2\times 3}\lf(-2\sumi a_i^2\lambda_i^2+\sumi \lambda_i^2\ri)\ri]\\
    =& \ 16\pi\lf[\frac{2\sumi a_i^2\lambda_i^2}{3\times 35}+\frac{ \sumi \lambda_i^2}{2\times3\times 35}\ri] .
    \end{split}
    \eee
\end{proof}

\end{document}